\documentclass[12pt,a4paper,leqno,oneside]{article}
\usepackage{amsmath,amssymb,amsfonts,amsthm}
\usepackage[active]{srcltx}
\usepackage[dvips]{color}
\newcommand{\Pra}[1]{\Pr\left\{#1\right\}}
\newcommand{\PP}{\mathbb{P}}
\newcommand{\kr}[2]{R^{(#1)}_{#2}}
\newcommand{\kra}[2]{X^{(#1)}_{#2}}
\newcommand{\kraw}[2]{Y^{(#1)}_{#2}}
\newcommand{\dtv}[2]{d_{TV}\left(#1,#2\right)}
\newcommand{\whp}{with high probability}
\newcommand{\Gnm}[2]{\mathcal{G}_{#1}\left(n, m, #2\right)}
\newcommand{\Hnm}[2]{\mathcal{H}_{#1}\left(n, m, #2\right)}
\newcommand{\Gn}[1]{G\left(n,#1\right)}
\newcommand{\Hn}[2]{H_{#1}\left(n,#2\right)}
\newcommand{\Hk}[2]{H^{(#1)}\left(n,#2\right)}

\newcommand{\Hnq}[2]{H^{(#1)}_*\left(C'nq,#2\right)}

\newcommand{\Gkn}[2]{G^{(#1)}\left(n,#2\right)}
\newcommand{\czesc}[1]
{\  \medskip\\
{\noindent \bf #1}
\medskip\\}

\newcommand{\M}{\mathcal{M}}

\newcommand{\D}[2]{\mathbb{D}^{(#1)}_{#2}}

\newcommand{\V}{\mathcal{V}}
\newcommand{\X}[1]{\mathcal{X}_{#1}}

\newcommand{\W}{\mathcal{W}}
\newcommand{\Po}{\textrm{Po}}
\newcommand{\Bin}{\textrm{Bin}}
\newcommand{\E}{\mathbb{E}}
\newcommand{\eps}{\varepsilon}
\newcommand{\coup}[1]{\preccurlyeq_{1-#1}}
\newcommand{\coupling}[1]{\preccurlyeq_{#1}}

\setlength{\oddsidemargin}{0in}
\setlength{\textwidth}{6.5in}
\setlength{\textheight}{9in}

\newtheorem{tw}{Theorem}

\newtheorem{lem}{Lemma}

\newtheorem{fact}{Fact}

\theoremstyle{plain}

\begin{document}

\author{Katarzyna Rybarczyk$^*$}
\title{Equivalence of a random intersection graph and $G(n,p)$}
\date{}

\maketitle
\begin{center}
$^*$Faculty of Mathematics and Computer
Science, Adam Mickiewicz University,\\ 60--769 Pozna\'n, Poland
\end{center}

\begin{abstract}
We solve the conjecture of Fill, Scheinerman and Singer-Cohen posed in \cite{GpEquivalence} and show equivalence of sharp threshold functions of a random intersection graph $\Gnm{}{p}$	with $m \ge n^3$  and a graph $\Gn{\hat{p}}$ with independent edges. Moreover we prove sharper equivalence results under some additional assumptions. 
\end{abstract}

{\bfseries keywords:} random intersection graph, equivalence, graph properties

\section{Introduction}\label{Introduction}

In a random intersection graph there is a set of vertices $\V$ and an auxiliary
set of features $\W$. Each vertex $v\in\V$ is assigned a subset of features
$W(v)\subseteq\W$ according to a given probability measure. Two vertices $v_1$, $v_2$ are adjacent in a random intersection graph if and only if
$W(v_1)\cap W(v_2)\neq\emptyset$. A general model of a random intersection graph, in which each vertex is assigned a subset of features
$W(v)\subseteq\W$ chosen uniformly at random from all $d$--element subsets, where
the cardinality $d$ is determined according to the arbitrarily given
probability distribution, was introduced in \cite{RIGGodehardt1}.

We concentrate on analysing properties of a random intersection graph in which the cardinality $d$ is chosen according to the binomial distribution. Namely, we investigate properties of a random intersection graph $\Gnm{}{p}$ introduced in \cite{GpSubgraph,SingerPhD}. $\Gnm{}{p}$ is
 a graph with number of vertices $|\V|=n$,
number of features $|\W|=m$, in which each feature $w$ is added to $W(v)$ with probability $p$ independently
for all $v\in\V$ and $w\in\W$ (i.e. $\Pra{w\in W(v)}=p$). However, to some extent, the results obtained may be generalised to other random intersection graph models due to equivalence theorems proved in Section~4 in \cite{WSNphase2}.

The general model of a random intersection graph has attracted lately much attention, mainly due to its wide applications such as: ''gate matrix layout'' for VLSI design (see e.g. \cite{GpSubgraph}), cluster analysis and classification (see e.g. \cite{RIGGodehardt1}), analysis of complex networks (see e.g. \cite{RIGTunableDegree, RIGDegree2}), secure wireless networks (see e.g. \cite{WSNWlosi,WSNphase2}) and epidemics (\cite{GpEpidemics}). On the wave of interest many articles concerning $\Gnm{}{p}$ have appeared. Therefore an important issue is to indicate for which parameters $\Gnm{}{p}$ differs significantly from well known random graph models and as a consequence is worth studying. 
The first article on the topic was written by Scheinerman, Fill and Singer--Cohen \cite{GpEquivalence}. In \cite{GpEquivalence} the authors described differences and similarities between $\Gnm{}{p}$ and random graph $G(n,\hat{p})$ in which each edge appears independently with probability $\hat{p}$ ($\hat{p}$ was set to be approximately $\Pra{(v_1,v_2)\in E(\Gnm{}{p})}$). The main aim of this article is to extend results obtained by Scheinerman, Fill and Singer--Cohen and to solve their conjecture.

The main theorem in \cite{GpEquivalence} states that for $m=\lfloor n^{\alpha}\rfloor$ and $\alpha>6$ graphs $G(n,\hat{p})$ and $\Gnm{}{p}$ have asymptotically the same properties. Moreover, it is pointed out that the theorem may be extended to smaller values of $\alpha$ if additional assumptions about $p$ are made. The proof is based on the fact that for large $\alpha$ and relevant values of $p$, with probability tending to one as $n\to \infty$, there are no features assigned to more than two vertices and therefore the dependency between edges is asymptotically negligible. The authors of \cite{GpEquivalence} suggest that the equivalence theorem is true for all properties for $3\le \alpha\le 6$, i.e. in the case where the number of vertices assigned to each feature is still small.

The above mentioned result and conjecture are consistent with a simple observation that the number of vertices to which a given feature $w$ is assigned has essential impact on dependency between edge appearance in $\Gnm{}{p}$. An edge set of a random intersection graph $\Gnm{}{p}$ is a union of cliques with vertex sets $V(w):=\{v\in\V:w\in W(v)\}$, $w\in\W$. Therefore we may divide the set of edges of $\Gnm{}{p}$ according to the size of the clique in which the edges are contained.
Let $k\ge 2$. We denote by $\Gnm{k}{p}$ a graph with vertex set $\V$ and edge set $\{(v_1,v_2):\exists_{w} v_1,v_2\in V(w)\text{ and }|V(w)|=k\}$. Alternatively we may define $\Gnm{k}{p}=G(\Hnm{k}{p})$, where $\Hnm{k}{p}$ is a hypergraph with vertex set $\V$ and edge set $\{(v_1,v_2,\ldots,v_k): \exists_{w} V(w)=\{v_1,v_2,\ldots,v_k\}\}$ and for a hypergraph $\mathcal{H}$ a graph $G\mathcal{H}$ is a graph with the same vertex set as $\mathcal{H}$ and edge set consisting of those pairs of vertices which are contained in at least one edge of $\mathcal{H}$. Under this notation $E(\Gnm{}{p})=\bigcup_{k=2}^{m}E(G\Hnm{k}{p})=\bigcup_{k=2}^{m}E(\Gnm{k}{p})$. 
In \cite{GpEquivalence} it is shown that for some $m$ and $p$ graphs $\Gnm{}{p}$, $\Gnm{2}{p}$, $G(n,\hat{p})$ are  asymptotically almost the same. To be precise $\Gnm{k}{p}$ are empty for $k\ge 3$ with probability tending to one as $n\to \infty$ (we say {\whp}) and the edges in $\Gnm{2}{p}$ are almost independent. 

The authors in \cite{GpEquivalence} support the conjecture for $3\le \alpha\le 6$ by results concerning threshold functions for some properties of $\Gnm{}{p}$. However, it should be pointed out that if there exists $C>0$ such that 
\begin{equation}\label{p2}
p\ge C \left(1/n\sqrt[3]{m}\right), 
\end{equation}
then the expected number of edges in $\Gnm{3}{p}$ tends to a constant or even to infinity. Therefore one may expect that the structure of $\Gnm{}{p}$ and $\Gn{\hat{p}}$ differs. Namely, though the number of triangles in $\Gnm{2}{p}$ may make dominating contribution, the impact of triangles contained in $\Gnm{3}{p}$ on the structure of a random intersection graph cannot be omitted. As an example we may state the fact that for $\alpha=3$ the number of triangles in $\Gnm{}{p}$ and $G(n,\hat{p})$ on the threshold of appearance (i.e. for $p=c/n^{2}$ and $\hat{p}\sim mp^2= c^2/n$) has the Poisson distribution with parameters $(c^3+c^6)/3!$ and $c^6/3!$, respectively (see \cite{GpPoissonCliques}). For larger values of $\alpha$ the expected number of triangles in $\Gnm{}{p}$ and $G(n,\hat{p})$ may also differ significantly. The same is true for cliques of size four contained in $\Gnm{4}{p}$. In fact $\Gnm{k}{p}$ should be rather compared with $G\Hn{k}{\hat{p_k}}$, where $\hat{p_k}$ is approximately the probability that for given $\{v_1,\ldots,v_k\}\subseteq \V$ there exists $w$ such that $V(w)=\{v_1,\ldots,v_k\}$, $\Hn{k}{\hat{p_k}}$ is a $k$-uniform random hypergraph with each edge appearing independently with probability $\hat{p_k}$ and $G\Hn{k}{\hat{p_k}}$ is defined as above.
The above observation leads us to the conclusion that the equivalence theorem may not be stated for $3\le \alpha\le 6$ in such a general form as it was for $\alpha > 6$. Therefore we draw our attention to the case of monotone properties. The concept of restriction of the equivalence theorems to the class of monotone properties has already been developed while examining the equivalence of $G(n,\hat{p})$ and $G(n,M)$ (see \cite{KsiazkaBollobas,KsiazkaJLR,EREquivalence}). 

The article is organised as follows. In Section~\ref{SectionResult} we state and discuss the results. Basic definitions, auxiliary facts and lemmas are given in Section~\ref{SectionAuxiliary}. Section~\ref{SectionCoupling} includes the proof of a lemma which relates $\Gnm{}{p}$ to $\Gn{\hat{p}}$. The proofs of the main theorems are given in Section~\ref{SectionOutline}. For completeness, the last section called Appendix is added. It includes long proofs which have been omitted for clarity of considerations.

Throughout the article all limits are taken as $n\to \infty$. We also use standard Landaus notation $O(\cdot),\Theta(\cdot),\Omega(\cdot),o(\cdot),\sim$ (see for example \cite{KsiazkaJLR}) and we use the phrase '{\whp}' to  say with probability tending to one as $n\to \infty$.

\section{Result}\label{SectionResult}

In our considerations we draw our attention to $\Gnm{}{p}$ for
\begin{equation}\label{RownanieGlowne}
\Omega\left(\frac{1}{n\sqrt[3]{m}}\right) = p = O\left(\sqrt{\frac{\ln n}{m}}\right).
\end{equation}
For values of $p$ significantly larger than $\sqrt{\frac{\ln n}{m}}$ a graph $\Gnm{}{p}$ is {\whp} the complete graph on $n$ vertices (see \cite{GpEquivalence,SingerPhD}). Moreover if  
\begin{equation*}
p=o\left(\frac{1}{n\sqrt[3]{m}}\right)
\end{equation*}
then {\whp} $\Gnm{k}{p}$ are empty for all $k\ge 3$. Therefore a slight modification of the proof from \cite{GpEquivalence} implies that $\Gnm{}{p}$ and $G(n,\hat{p})$ are asymptotically equivalent for all graph properties. 
In fact the following equivalence theorem may be stated. 
\begin{tw}\label{alpha6}
 Let $a\in [0;1]$, $\mathcal{A}$ be any graph property, $p=o\left(\frac{1}{n\sqrt[3]{m}}\right)$ and 
$$
\hat{p}=1-\exp\left(-mp^{2}(1-p)^{n-2}\right).
$$ 
Then
$$
\Pra{\Gn{\hat{p}}\in\mathcal{A}}\to a
$$
if and only if
$$
\Pra{\Gnm{}{p}\in \mathcal{A}}\to a.
$$
\end{tw}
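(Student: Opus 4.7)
The plan is to show that the total variation distance between $\Gnm{}{p}$ and $\Gn{\hat{p}}$ tends to zero, which is strictly stronger than the asserted equivalence of limit probabilities. The argument splits into a deterministic reduction to the ``$2$-feature'' part of the intersection graph, followed by a Poissonization/thinning step that identifies the latter with $\Gn{\hat{p}}$.

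First I would show that \whp\ every feature $w\in\W$ satisfies $|V(w)|\le 2$, so that $\Gnm{}{p}=\Gnm{2}{p}$ as graphs. The number of features with $|V(w)|=k$ has expectation at most $m(np)^k/k!$, hence the total expected number with $|V(w)|\ge 3$ is $O\bigl(m(np)^3\bigr)=o(1)$ under the standing hypothesis $p=o\bigl(1/(n\sqrt[3]{m})\bigr)$; Markov's inequality completes the reduction.

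Second, I would describe $\Gnm{2}{p}$ as follows: each of the $m$ features independently produces either a ``null'' outcome (with probability $1-\binom{n}{2}q$, where $q=p^2(1-p)^{n-2}$) or a uniformly chosen pair from $\binom{\V}{2}$, and the edge set of $\Gnm{2}{p}$ is the support of the produced pairs. I would Poissonize by replacing the underlying $\Bin\bigl(m,\binom{n}{2}q\bigr)$ count of ``active'' features by a $\Po\bigl(m\binom{n}{2}q\bigr)$ count while keeping the uniform pair selection. Le Cam's inequality bounds the total variation cost of this replacement by $m\bigl(\binom{n}{2}q\bigr)^2\le mn^4p^4/4$, and the same cubing as in the first step gives $mn^4p^4=o(1)$.

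In the Poissonized model, the standard decomposition of a Poisson process by its mark yields, for every $e\in\binom{\V}{2}$, an independent $\Po(mq)$ variable counting the features landing on $e$; the indicator of ``$\ge 1$'' is then $\mathrm{Bern}(1-e^{-mq})=\mathrm{Bern}(\hat{p})$, and independence across $e$ identifies the Poissonized model with $\Gn{\hat{p}}$ exactly. Combining the three steps yields $\dtv{\Gnm{}{p}}{\Gn{\hat{p}}}\to 0$, which implies the theorem. The only real obstacle is deciding to Poissonize at the level of features rather than chasing pairwise dependencies between edges directly; once that move is made, the exponent $1/3$ in the hypothesis governs both the emptiness bound $m(np)^3=o(1)$ and the Poissonization error $mn^4p^4=o(1)$, which is no coincidence and explains why this is the natural range for such a clean equivalence.
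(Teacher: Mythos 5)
Your proposal is correct and follows essentially the same route as the paper, which for this theorem cites Fill--Scheinerman--Singer-Cohen together with Lemma~\ref{LematDtv}: reduce to $\Gnm{2}{p}$ by showing whp no feature selects three or more vertices, then Poissonize the count of active features so that edge-indicators become independent $\mathrm{Bern}(\hat{p})$, identifying the Poissonized model with $\Gn{\hat{p}}$. The only cosmetic difference is that you bound the Poissonization cost via Le Cam ($mn^4p^4$) where the paper uses the simpler $\dtv{\Bin(\hat{n},\hat{p})}{\Po(\hat{n}\hat{p})}\le\hat{p}$ bound ($\binom{n}{2}p^2(1-p)^{n-2}$), but both are $o(1)$ in the stated range.
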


The main result of the article implies equivalence of the models for monotone properties. Most important properties such as connectivity, having the largest component of size at least $k$, containment of a perfect matching or containment of a  given graph as a subgraph are included in the wide family of monotone properties. Let $\mathcal{G}$ be a family of graphs with vertex set $\V$. We call $\mathcal{A}\subseteq \mathcal{G}$ an increasing (decreasing) property if $\mathcal{A}$ is closed under isomorphism and $G\in \mathcal{A}$ implies $G'\in \mathcal{A}$ for all $G'$ such that $E(G)\subseteq E(G')$ ($E(G')\subseteq E(G)$).

\begin{tw}\label{Twierdzenie3}
Let  $a\in [0;1]$,  $m=n^{\alpha}$ for $\alpha\ge 3$ and $\mathcal{A}$ be any monotone property. 
\begin{itemize} 
\item[(i)] Let  $p$ be as in \eqref{RownanieGlowne}
 and $1/n\sqrt[3]{m}=o(p)$  for $\alpha=3$.\\
If
$$
\Pra{\Gn{1-\exp(-mp^2(1-p)^{n-2})}\in\mathcal{A}}\to a
$$
and for all $\eps=\eps(n)\to 0$
$$
\Pra{\Gn{(1+\eps)(1-\exp(-mp^2(1-p)^{n-2}))}\in\mathcal{A}}\to a, 
$$
then 
$$
\Pra{\Gnm{}{p}\in\mathcal{A}}\to a.
$$
\item[(ii)]
Let $\hat{p}=\hat{p}(n)=\Omega(n^{-2}m^{1/3})$ for $\alpha>3$, $n^{-2}m^{1/3}=o(\hat{p})$ for $\alpha=3$ and $\hat{p}\in [0;1)$ be a sequence bounded away from one by a constant.\\
If for all $\eps=\eps(n)\to 0$
$$
\Pra{\Gnm{}{\sqrt{-\frac{\ln (1-\frac{\hat{p}}{1+\eps})}{m}}}\in\mathcal{A}}\to a
$$
and 
$$
\Pra{\Gnm{}{\sqrt{-\frac{\ln (1-\hat{p})}{(1-\eps)m}}}\in\mathcal{A}}\to a 
$$
then 
$$
\Pra{\Gn{\hat{p}}\in\mathcal{A}}\to a.
$$
\end{itemize}
\end{tw}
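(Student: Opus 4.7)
The plan is to reduce both parts to a pair of stochastic sandwich couplings between $\Gnm{}{p}$ and Erd\H{o}s--R\'enyi graphs. Writing $\hat{p}_0:=1-\exp(-mp^2(1-p)^{n-2})$, I aim to construct a joint distribution on which
\begin{equation*}
\Gn{\hat{p}_0}\subseteq\Gnm{}{p}\subseteq\Gn{(1+\eps)\hat{p}_0}
\end{equation*}
holds with probability $1-o(1)$ for some sequence $\eps=\eps(n)\to 0$. Once this is in hand, part~(i) for an increasing $\mathcal{A}$ is immediate from
\begin{equation*}
\Pra{\Gn{\hat{p}_0}\in\mathcal{A}}-o(1)\le\Pra{\Gnm{}{p}\in\mathcal{A}}\le\Pra{\Gn{(1+\eps)\hat{p}_0}\in\mathcal{A}}+o(1),
\end{equation*}
both outer terms tending to $a$ by hypothesis; a decreasing property is the complementary case, with the same sandwich giving reversed inequalities.

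For the lower coupling I would write $\Gnm{}{p}\supseteq\Gnm{2}{p}$, appeal to Lemma~\ref{LematDtv} in Section~\ref{Sectiondtv} for a total-variation bound $d_{TV}(\Gnm{2}{p},\Gn{\hat{p}_0})=o(1)$, and invoke the optimal coupling in which these two graphs coincide with probability $1-o(1)$. Combined with the deterministic inclusion $\Gnm{2}{p}\subseteq\Gnm{}{p}$, this delivers $\Gn{\hat{p}_0}\subseteq\Gnm{}{p}$ on an event of probability $1-o(1)$.

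The upper coupling is where the real work lies. I would use the decomposition $\Gnm{}{p}=\Gnm{2}{p}\cup\bigcup_{k\ge 3}\Gnm{k}{p}$, where the assumptions $\alpha\ge 3$ and $1/n\sqrt[3]{m}=o(p)$ at $\alpha=3$ are exactly what is needed for a first-moment argument to force the number of edges in $\bigcup_{k\ge 3}\Gnm{k}{p}$ to be $o(\hat{p}_0\binom{n}{2})$ with high probability. These excess edges are then absorbed into the $\eps\hat{p}_0$ edge-budget of an inflated $\Gn{(1+\eps)\hat{p}_0}$ via the coupling construction from Section~\ref{SectionCoupling}. The main obstacle is that this absorption must be realised as a genuine coupling rather than a first-moment comparison: the edges of $\Gnm{}{p}$ are dependent through the underlying hyperedges $V(w)$, so stochastic domination by an independent-edge model is not automatic, and the allocation of the inflated probability $\eps\hat{p}_0$ among the non-$\Gnm{2}{p}$ edges must be performed coherently with the $d_{TV}$-coupling used for the $\Gnm{2}{p}$ piece.

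Part~(ii) follows from the same two couplings applied inversely. The parameters $p_+=\sqrt{-\ln(1-\hat{p}/(1+\eps))/m}$ and $p_-=\sqrt{-\ln(1-\hat{p})/((1-\eps)m)}$ are chosen so that $(1+\eps)\hat{p}_0(p_+)=\hat{p}+o(1)$ while $\hat{p}_0(p_-)=1-(1-\hat{p})^{1/(1-\eps)}\ge\hat{p}$, placing $\hat{p}$ between the two Erd\H{o}s--R\'enyi parameters produced by the sandwich. The upper coupling at $p_+$ then gives $\Gnm{}{p_+}\subseteq\Gn{\hat{p}}$, the lower coupling at $p_-$ gives $\Gn{\hat{p}}\subseteq\Gnm{}{p_-}$, each up to $o(1)$ error, and monotonicity of $\mathcal{A}$ together with the two hypotheses pinches $\Pra{\Gn{\hat{p}}\in\mathcal{A}}$ to $a$.
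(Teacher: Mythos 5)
Your overall architecture — sandwich $\Gnm{}{p}$ between two Erd\H{o}s--R\'enyi graphs via whp couplings, then use monotonicity to pinch the probability — is the same as the paper's, and the lower coupling via $\Gnm{2}{p}\subseteq\Gnm{}{p}$ together with Lemma~\ref{LematDtv} and Fact~\ref{FaktDtvCoupling} is correct.

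The gap is in the upper coupling, and you yourself flag it without resolving it. A first-moment bound on $|E(\bigcup_{k\ge 3}\Gnm{k}{p})|$ cannot by itself be ``absorbed'' into the $\eps\hat p_0$ budget, because the offending edges arrive in $k$-cliques rather than independently: each hyperedge of size $k$ deposits a $\binom{k}{2}$-clique all at once, and an independent-edge graph $\Gn{\eps\hat p_0}$ with a comparable expected edge count will almost surely fail to cover such cliques. The paper makes this point explicitly in Section~\ref{SectionOutline} by exhibiting $\Gn{r'}\coupling{1}G\Hn{3}{q^3}$ with $r'\sim\tfrac{1}{3}nq^3$, which shows that no constant-factor coupling $G\Hn{3}{q^3}\coup{o(1)}\Gn{Cq}$ can exist. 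The resolution is Lemmas~\ref{LematCoupling3}--\ref{LematCoupling5}: one can couple $G\Hn{k}{q^{\binom{k}{2}}}\coup{o(1)}\Gn{a_n(c_k q)c_k q}$, but the factor $a_n(\cdot)$ defined in \eqref{RownanieStala} is genuinely $n$-dependent (it can be of order $\ln n/\ln\ln n$). What makes Theorem~\ref{Twierdzenie3} work is a separate verification that under its hypotheses on $p$ the accumulated correction satisfies $\sum_{k\ge 3}a_n(c_k q_k)c_k q_k=o(q_2)$, i.e.\ that $p_+=(1+o(1))q_2$ in Lemma~\ref{LematGlowny}; that is precisely why $\alpha=3$ requires $1/(n\sqrt[3]{m})=o(p)$ and why the triangle-threshold regime is excluded. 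You also need the preliminary step $\Gnm{}{p}\coup{o(1)}\bigcup_{k=2}^K\Gnm{k}{p}$, justified by $\Pr\{\exists w:|V(w)|>K\}=O(mn^{K+1}p^{K+1})=o(1)$ for a finite $K$ depending on the regime, before Lemma~\ref{LematDtv} can be applied to the remaining hypergraph layers all at once (which, incidentally, is what handles the ``coherence'' worry you raise at the end — the layers are made independent jointly by the total-variation coupling, then each layer is dominated separately and summed via Fact~\ref{FaktCouplingSuma}). Without invoking these lemmas and the quantitative check $p_+=(1+o(1))q_2$, the argument does not close. Part~(ii) has the same structure and inherits the same gap.
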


In (i) and (ii) for $\alpha=3$ we have to exclude the case $p = \Theta(1/n\sqrt{m})$ and $\hat{p}=\Theta(n^{-2}m^{1/3})$, since the thesis is not true on the threshold of  triangle appearance (see \cite{GpPoissonCliques}). In relation to assumptions of (ii), it should be pointed out that the case $\hat{p}(n)=o(n^{-2}m^{1/3})$ is included in Theorem~\ref{alpha6}.

The method of the proof is strong enough to show sharper results in many cases. For example, for $\alpha>3$ a function $\eps(n)$ may be replaced by $1/n^{\delta}$, where $\delta$ is a constant depending on $\alpha$. We state here two theorems as an example of how tight the results may be, if we make some additional assumptions.

\begin{tw}\label{Twierdzenie4} 
Let $a\in [0;1]$,  $\mathcal{A}$ be any monotone property ,  $m=n^{\alpha}$ for $\alpha > 4$ and
$p$ be as in \eqref{RownanieGlowne}. 
Let
\begin{align*}
\hat{p}_-&=\ \ 1-\exp(-mp^2(1-p)^{n-2});\\
\hat{p}_+&=1-\exp(-mp^2(1-p)^{n-2})+ 30 \sqrt[3]{mp^3}.
\end{align*}
If
$$
\Pr\{\Gn{\hat{p}_-}\in\mathcal{A}\}\to a\quad \text{and} \quad\Pr\{\Gn{\hat{p}_+}\in\mathcal{A}\}\to a 
$$
then 
$$
\Pr\{\Gnm{}{p}\in\mathcal{A}\}\to a.
$$
\end{tw}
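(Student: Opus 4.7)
The plan is to prove Theorem~\ref{Twierdzenie4} by a sandwich argument. Since $\mathcal{A}$ is monotone I may assume it is increasing (the decreasing case follows by complementation). It will then suffice to construct two couplings under which
$$
E(\Gn{p_-})\subseteq E(\Gnm{}{p})\quad\text{and}\quad E(\Gnm{}{p})\subseteq E(\Gn{p_+})
$$
each hold {\whp}. Combined with the hypotheses $\Pra{\Gn{p_\pm}\in\mathcal{A}}\to a$ and the closure of $\mathcal{A}$ under supergraphs, these inclusions sandwich $\Pra{\Gnm{}{p}\in\mathcal{A}}$ between $a-o(1)$ and $a+o(1)$.

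For the lower inclusion I would use $E(\Gnm{2}{p})\subseteq E(\Gnm{}{p})$ and couple $\Gn{p_-}$ into $\Gnm{2}{p}$. An edge $\{u,v\}$ appears in $\Gnm{2}{p}$ iff at least one $w$ satisfies $V(w)=\{u,v\}$; across distinct $w$'s these events are independent, each of probability $p^2(1-p)^{n-2}$, so the marginal edge probability equals
$$
1-(1-p^2(1-p)^{n-2})^m\ \ge\ 1-\exp(-mp^2(1-p)^{n-2})\ =\ p_-.
$$
The only source of dependence between distinct edges is that a single $w$ serving two incident pairs is a mutually exclusive event; this correlation is weakly negative and uniformly small. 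A Poisson-type total variation estimate of the form recorded in Lemma~\ref{LematDtv}, together with the coupling machinery of Section~\ref{SectionCoupling}, delivers the inclusion $\Gn{p_-}\subseteq \Gnm{2}{p}$ {\whp} under the stated range of parameters.

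For the upper inclusion I would decompose $E(\Gnm{}{p})=E(\Gnm{2}{p})\cup \bigcup_{k\ge 3}E(\Gnm{k}{p})$. The pair piece is absorbed into a coupled copy of $\Gn{p_-}$ by the same lemma, with the small Poisson-approximation excess eaten by part of the additive slack $10\sqrt[3]{mp^3}$. The higher-uniformity part consists of extra edges from cliques of size at least three, whose expected count is
$$
\sum_{k\ge 3}\binom{k}{2}m\binom{n}{k}p^k(1-p)^{n-k}=O(mn^3p^3),
$$
dominated by the $k=3$ term because $np=o(1)$ throughout. For $\alpha>4$ and $p=O(\sqrt{\ln n/m})$ this is comfortably smaller than $\binom{n}{2}\cdot 10\sqrt[3]{mp^3}$, so a Markov estimate bounds the number of such edges by $\binom{n}{2}\cdot 10\sqrt[3]{mp^3}$ {\whp}; these edges can then be placed inside an independent Bernoulli$(p_+-p_-)$ overlay on top of the coupled $\Gn{p_-}$, producing the inclusion $\Gnm{}{p}\subseteq \Gn{p_+}$.

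The main obstacle will be to match the two error budgets with the single additive slack $10\sqrt[3]{mp^3}$, rather than the multiplicative $(1\pm\varepsilon)$ slack used in Theorem~\ref{Twierdzenie3}. The assumption $\alpha>4$ is what permits both the Poisson-approximation error in the $\Gnm{2}{p}$ coupling and the bulk contribution from the $k\ge 3$ cliques to be controlled by this fixed slack simultaneously; once one descends into $3\le\alpha\le 4$ one of these estimates deteriorates and only the weaker Theorem~\ref{Twierdzenie3} remains available.
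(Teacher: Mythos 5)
Your lower inclusion is essentially right and follows the paper's route: $\Gnm{2}{p}$ and $\Gn{p_-}$ are close in total variation (Lemma~\ref{LematDtv}), and Fact~\ref{FaktDtvCoupling} converts this into a two-sided coupling, giving $\Gn{p_-}\coup{o(1)}\Gnm{2}{p}\subseteq\Gnm{}{p}$. The upper inclusion, however, has a genuine gap. You argue that since the expected number of extra edges from cliques of size $\ge 3$ is small, Markov bounds that count {\whp}, and "these edges can then be placed inside an independent Bernoulli$(p_+-p_-)$ overlay." That last step is not a coupling construction. A bound on the \emph{number} of extra edges does not produce a monotone coupling between the \emph{graph-valued} random variables $\bigcup_{k\ge3}\Gnm{k}{p}$ and $\Gn{p_+-p_-}$: the extra edges arrive in correlated clusters (triangles, etc.), and if you simply reveal them first and then try to "fit" an independent $\Gn{r}$ around them, the marginal distribution of the enveloping graph is no longer $\Gn{r}$. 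The whole difficulty of Lemma~\ref{LematGlowny} is precisely here, and it is resolved not by Markov but by the chain of couplings \eqref{coupling1}--\eqref{coupling6}, with the hard step being Lemma~\ref{TwierdzenieCoupling3} (and its $k$-partite reduction), proved via the auxiliary graphs $H(x)$, $T(x)$, $T^*(x)$, the coupon-collector/urn comparisons, and the matching-size concentration of Lemmas~\ref{LematMatchingSize}--\ref{LematCouplingMatching}.

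The paper in fact records explicitly why a naive constant-slack domination cannot work: at the end of Section~\ref{SectionOutline} it constructs a subgraph of $G\Hn{3}{q^3}$ that is distributed as $\Gn{r'}$ with $r'\sim nq^3/3$, showing that one must take $a_n=\Omega(nq^2)$ in $G\Hn{3}{q^3}\coup{o(1)}\Gn{a_nq}$. So the form of the slack (a function $a_n(q)$ rather than a universal constant) is forced by the dependence structure, and your argument does not engage with it. For $\alpha>4$ it happens that $nq_3^2\to 0$ so $a_n$ stays bounded and an additive term $O(\sqrt[3]{mp^3})$ suffices --- but this has to be \emph{proved} via the coupling machinery, and the Markov/overlay shortcut is where your proof would fail to compile into an actual argument.
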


\begin{tw}\label{Twierdzenie313}
Let $a\in [0;1]$, $\mathcal{A}$ be any monotone property,  $m=n^{\alpha}$ for $\alpha > 10/3$ and $p$ be as in \eqref{RownanieGlowne}. Let
\begin{align*}
\hat{p}_-=&\ \ \  1-\exp(-mp^2(1-p)^{n-2});\\
\hat{p}_+=&
\begin{cases}
1-\exp(-mp^2(1-p)^{n-2})+ 90  \sqrt[3]{mp^3},\\
\quad\quad\quad\quad\text{ for }
\Omega\left(n^{-1}m^{-1/3}\right)=p=o\left(n^{-1}m^{-1/4}\right);\\  
 1-\exp(-mp^2(1-p)^{n-2})+ 90  \sqrt[3]{mp^3} + 471 \sqrt[6]{mp^4},\\
 \quad\quad\quad\quad\text{ for }
 \Omega\left(n^{-1}m^{-1/4}\right)=p=O\left(m^{-1/2}\ln^{1/2} n\right).  
\end{cases}
\end{align*}
If
$$
\Pr\{\Gn{\hat{p}_-}\in\mathcal{A}\}\to a\quad \text{and} \quad\Pr\{\Gn{\hat{p}_+}\in\mathcal{A}\}\to a 
$$
then 
$$
\Pr\{\Gnm{}{p}\in\mathcal{A}\}\to a.
$$
\end{tw}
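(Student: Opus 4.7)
The plan is to prove the theorem by a two-sided coupling sandwich
\begin{equation*}
  G(n,p_-)\coup{o(1)}\Gnm{}{p}\coup{o(1)}G(n,p_+),
\end{equation*}
after which the monotonicity of $\mathcal{A}$ together with the two hypotheses on $\Pra{G(n,p_\pm)\in\mathcal{A}}$ immediately yields $\Pra{\Gnm{}{p}\in\mathcal{A}}\to a$ in the standard way. The argument is organised around the edge decomposition $E(\Gnm{}{p})=\bigcup_{k\ge 2}E(\Gnm{k}{p})$ introduced in Section~\ref{Introduction}.

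The lower coupling follows from the total-variation bound proved in Section~\ref{Sectiondtv} (Lemma~\ref{LematDtv}) applied to $\Gnm{2}{p}\subseteq\Gnm{}{p}$, exactly as in the proof of Theorem~\ref{alpha6}. For the upper coupling I would first use the hypothesis $\alpha>10/3$ to discard the tail of the decomposition: the expected number of objects with $|V(w)|\ge 5$ is at most $mn^5p^5\le C^5 n^{5-3\alpha/2}(\ln n)^{5/2}=o(1)$ precisely because $\alpha>10/3$, hence whp $\Gnm{k}{p}=\emptyset$ for every $k\ge 5$. It then suffices to couple the three surviving pieces $\Gnm{2}{p}$, $\Gnm{3}{p}$ and $\Gnm{4}{p}$ simultaneously into independent-edge graphs whose joint per-edge probability is at most $p_+$.

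The piece $\Gnm{2}{p}$ is handled by Lemma~\ref{LematDtv} with target probability $p_-$. For $k\in\{3,4\}$ the number $N_k$ of objects with $|V(w)|=k$ is binomially distributed with mean $\sim mn^kp^k/k!$; a Chernoff bound gives $N_k\le(1+o(1))\E N_k$ whp, and conditionally on $N_k$ these objects sit on uniformly random $k$-subsets of $\V$, so $\Gnm{k}{p}$ is a union of $N_k$ uniform copies of $K_k$. The technical heart of the argument, which I would set up in Section~\ref{SectionCoupling}, is a coupling lemma saying that such a union is stochastically dominated, up to a $1-o(1)$ event, by the independent-edge graph $G(n,q_k)$ with $q_3=c_3\sqrt[3]{mp^3}$ and $q_4=c_4\sqrt[6]{mp^4}$. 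Granting this, an independent superposition of $G(n,p_-)$, $G(n,q_3)$ and $G(n,q_4)$ gives a graph with per-edge probability $1-(1-p_-)(1-q_3)(1-q_4)\le p_+$ that dominates $\Gnm{}{p}$; in the regime $p=o(n^{-1}m^{-1/4})$ one has $q_4 n^2=o(1)$, which forces $\Gnm{4}{p}=\emptyset$ whp and allows the $k=4$ term to be dropped, which is precisely why $p_+$ has the simpler form there.

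The main obstacle is the coupling lemma in the previous paragraph: turning a random union of uniform $K_k$'s into a stochastically dominating independent-edge graph with the sharp per-edge probability. Heuristically, $G(n,q_k)$ already contains $\sim\binom{n}{k}q_k^{k(k-1)/2}$ uniformly scattered copies of $K_k$, and this count must dominate $N_k$ with room to spare, which forces $q_k\sim c\,(mp^k)^{2/(k(k-1))}$ and reproduces the exponents $1/3$ and $1/6$ in the statement. Making this rigorous with $o(1)$ failure probability requires combining Chernoff bounds on $N_k$ and on the clique count of $G(n,q_k)$ with an injection-style embedding of the $\Gnm{k}{p}$ cliques into those of $G(n,q_k)$, in which collisions between cliques must also be controlled. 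The explicit constants $c_3=30$ and $c_4=157$ come from carrying enough slack through all three estimates, and the hypothesis $\alpha>10/3$ reenters precisely here to keep the error terms in the collision count $o(1)$.
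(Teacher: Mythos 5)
Your top-level plan is correct and matches the paper: establish the sandwich coupling $\Gn{p_-}\coup{o(1)}\Gnm{}{p}\coup{o(1)}\Gn{p_+}$, then appeal to monotonicity. The decomposition $E(\Gnm{}{p})=\bigcup_k E(\Gnm{k}{p})$, the use of Lemma~\ref{LematDtv} to pass to independent random hypergraphs, the truncation at $k\le 4$ driven by $\alpha>10/3$ (which indeed makes $mn^5p^5=o(1)$ in the whole range of $p$), the scale $q_k\sim(mp^k)^{2/k(k-1)}$, and the final superposition argument are all the paper's plan (Section~\ref{SectionOutline} and Lemma~\ref{LematGlowny}). A small slip: you write ``$q_4 n^2 = o(1)$'' to justify dropping $k=4$ when $p=o(n^{-1}m^{-1/4})$; what you actually need, and what is true, is $mn^4p^4=o(1)$, i.e.\ that $\Hnm{4}{p}$ is empty whp.

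The genuine gap is in the proposed proof of the key coupling lemma, which you recognize as ``the main obstacle'' but attack by the wrong route. You propose to control the clique count of $G(n,q_k)$, compare it to $N_k$, and then find ``an injection-style embedding of the $\Gnm{k}{p}$ cliques into those of $G(n,q_k)$, in which collisions \dots must also be controlled.'' This does not yield a coupling with the required marginals. The point is that, conditioned on $G(n,q_k)$, its set of $k$-cliques is \emph{not} a uniformly random collection of $k$-subsets: cliques in $G(n,q_k)$ cluster (share edges), whereas the hyperedges of $\Hn{k}{r}$ are independent uniform $k$-subsets. Any injection scheme that chooses $N_k$ cliques inside $G(n,q_k)$ therefore produces the wrong distribution on the hypergraph side; conversely, first sampling $\Hn{k}{r}$ and then trying to reveal a supergraph with independent edges fails because the edges of $G\Hn{k}{r}$ are positively associated (and having the right marginal edge probability and enough cliques is necessary but not sufficient for stochastic domination by a product measure — a two-edge counterexample already shows this). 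The paper avoids the issue entirely: Lemmas~\ref{TwierdzenieCoupling3}--\ref{TwierdzenieCoupling5} reduce to the $k$-partite case, split $\Hk{k}{r}$ into $n$ independent pieces $H(x)$ indexed by the last partition class, and compare each $GH(x)$ with an auxiliary two-stage graph $T(x)$ built by first activating ``spokes'' at $x$ with probability $Cq$ and then placing independent $(k-1)$-partite hyperedges among the activated vertices; the union $\bigcup_x T^*(x)$ is then controlled through the concentration of the multiplicity $X_n(x_1,\dots,x_{k-1})$, which is exactly where the factor $a_n(\cdot)$ in \eqref{RownanieStala} (and hence the explicit constants $c_3=30$, $c_4=157$, obtained as $a_n(c_kq_k)c_k$) comes from. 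That construction, rather than clique counting plus injection, is the content you would need to supply.
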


\section{Auxiliary definitions, inequalities and facts}\label{SectionAuxiliary}

\subsection{Coupling}
In the proofs a coupling argument is frequently used. Let $<\PP,\prec>$ be a countable partially ordered set. Usually $\PP$ stands for a subset of $\mathbb{N}$ with relation $\leq$, a Cartesian product $\mathbb{N}^t$ with relation $(x_1,\ldots,x_t)\prec (y_1,\ldots,y_t)\Leftrightarrow \forall_{1\le i\le t}x_i\le y_i$ or a set of hypergraphs $\mathcal{G}$ on a given set of vertices with relation $\subseteq$ of being a subhypergraph. In the article the set $\mathcal{G}$ is either the set of all graphs or hypergraphs on $n$ vertices or the set of $k$--partite graphs or hypergraphs with partitions with $n$ vertices. To omit unnecessary formalities it is not directly stated which partially ordered set is  considered, when it is obvious from the context. Let $X$ and $Y$ be two random variables with values in $\PP$. We write 
$$
X\coupling{q} Y,
$$
if there exists a coupling $(X,Y)$ of the random variables  
such that $X\prec Y$ with probability $q$ (i.e. if there exists a probability space $\Omega$ and two random variables $X'$ and $Y'$, such that $X'$ and $Y'$ are both defined on $\Omega$, have probability distribution as $X$ and $Y$, respectively, and $X'\prec Y'$ with probability $q$). 
We use the fact that such coupling exists if and only if there exists a probability measure $\mu: \PP\times \PP\to [0;1]$ such that for any set $A\subseteq \PP$ we have $\mu(A\times \PP)=\Pr\{X\in A\}$ and $\mu(\PP\times A)=\Pr\{Y\in A\}$ and $\mu(\{(x,y)\in \PP\times \PP ; x\prec y\})=q$). 

Now two useful facts are stated.
The simple proofs are added for completeness of considerations.

\begin{fact}\label{FaktCouplingPrzechodniosc}
Let $\PP$ be a countable partially ordered set and $X$ and $Y$ be random variables with values in $\PP$. If
\begin{equation}\label{EqFact1}
X \coupling{1-q_1} Y \quad \text{and} \quad Y\coupling{1-q_2} Z,
\end{equation}
then for some $q\le q_1 + q_2$
$$
X \coupling{1-q} Z.
$$
\end{fact}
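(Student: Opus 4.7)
The plan is to construct a joint coupling of $(X,Y,Z)$ on a single probability space by gluing together the two given couplings along their common $Y$-marginal, and then apply transitivity of $\prec$ together with the union bound.

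More explicitly, by hypothesis there exist random variables $(X_1,Y_1)$ with $X_1\prec Y_1$ on an event of probability at least $1-q_1$ and $(Y_2,Z_2)$ with $Y_2\prec Z_2$ on an event of probability at least $1-q_2$, where $X_1\stackrel{d}{=}X$, $Y_1\stackrel{d}{=}Y_2\stackrel{d}{=}Y$, and $Z_2\stackrel{d}{=}Z$. Since $\PP$ is countable, for each $y\in \PP$ I can decompose $(X_1,Y_1)$ via a regular conditional probability $\PP\{X_1=\cdot\mid Y_1=y\}$, and similarly decompose $(Y_2,Z_2)$ via $\PP\{Z_2=\cdot\mid Y_2=y\}$. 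Now I construct the joint law of $(X',Y',Z')$ by sampling $Y'$ from the common distribution of $Y$, then independently sampling $X'$ from the first conditional and $Z'$ from the second conditional. By construction the $(X',Y')$-marginal equals that of $(X_1,Y_1)$ and the $(Y',Z')$-marginal equals that of $(Y_2,Z_2)$.

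On the event $\{X'\prec Y'\}\cap\{Y'\prec Z'\}$, transitivity of $\prec$ yields $X'\prec Z'$. Therefore
\[
\Pr\{X'\not\prec Z'\}\ \le\ \Pr\{X'\not\prec Y'\}+\Pr\{Y'\not\prec Z'\}\ \le\ q_1+q_2,
\]
so setting $q:=\Pr\{X'\not\prec Z'\}\le q_1+q_2$ gives the desired coupling $X\coupling{1-q}Z$.

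The only mildly delicate point is the existence of the regular conditional distributions used in the gluing step; this is where countability of $\PP$ is invoked, since it allows us to define the conditional laws elementarily (by dividing by $\Pr\{Y=y\}$ whenever this is positive) and to verify that the resulting measure on $\PP\times\PP\times\PP$ has the claimed marginals. Once the gluing is in place, the conclusion is immediate from the union bound and transitivity of the order.
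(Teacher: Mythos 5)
Your proof is correct and is essentially the same as the paper's: the paper glues the two coupling measures $\mu_1,\mu_2$ into a measure $\mu_3(x,y,z)=\mu_1(x,y)\mu_2(y,z)/\Pr\{Y=y\}$ on triples, which is exactly your construction of sampling $Y'$ and then $X',Z'$ conditionally independently given $Y'$. The concluding bound is likewise the union bound in the form $\mu_3(A_1\cap A_2)\ge \mu_3(A_1)+\mu_3(A_2)-1$, together with transitivity of $\prec$.
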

\begin{proof}
Let $\mu_1, \mu_2: \PP\times\PP\to [0;1]$ be probability measures associated with couplings existing by~\eqref{EqFact1}. Let $\PP^*=\{y\in \PP:\Pra{Y=y}\neq 0\}$. Define 
\begin{align*}
\mu_3
&:\PP\times\PP^*\times \PP\to [0;1],
& \mu_3(x,y,z)
&=\frac{\mu_1(x,y)\mu_2(y,z)}{\Pra{Y=y}};\\
\mu
&:\PP\times\PP\to [0;1],
& \mu(x,z)
&=\mu_3(\{x\}\times \PP^* \times \{z\}).
\end{align*}
Then for $A_1=\{(x,y,z):x\prec y\}$ and $A_2=\{(x,y,z):y\prec z\}$ we have
\begin{multline*}
\mu(\{(x,z):x\prec z\})=\mu_3(\{(x,y,z):x\prec z\})\ge\\
\ge\mu_3(A_1\cap A_2)\ge \mu_3(A_1)+\mu_3(A_2)-1=1-(q_1+q_2). 
\end{multline*}
\end{proof}

\begin{fact}\label{FaktCouplingSuma}
If $(X_1,\ldots,X_t)$ and $(Y_1,\ldots,Y_t)$ are vectors of independent random variables and
\begin{equation}\label{EqFact2}
X_i \coupling{q_i} Y_i, \quad \text{for all }  1\le i \le t,
\end{equation} 
then 
$$
(X_1,\ldots,X_t) \coupling{q} (Y_1,\ldots,Y_t)
$$
and
$$
\sum_{i=1}^t X_i \coupling{q'} \sum_{i=1}^t Y_i,
$$
where
$q,q'\ge \prod_{i=1}^{k} q_i$.
\end{fact}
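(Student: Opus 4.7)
The natural approach is to build a coupling of the vectors $(X_1,\dots,X_t)$ and $(Y_1,\dots,Y_t)$ by taking the product of the individual couplings. For each $i$, hypothesis \eqref{EqFact2} gives a probability measure $\mu_i:\PP\times\PP\to[0;1]$ whose marginals are the laws of $X_i$ and $Y_i$ and which concentrates mass $q_i$ on $\{(x,y):x\prec y\}$. Define
\[
\mu(x_1,\dots,x_t;\,y_1,\dots,y_t)=\prod_{i=1}^t \mu_i(x_i,y_i),
\]
viewed as a measure on $\PP^t\times\PP^t$. This is the coupling I would use.

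First I would verify that the marginals of $\mu$ agree with the laws of $(X_1,\dots,X_t)$ and $(Y_1,\dots,Y_t)$; this is where the independence hypothesis enters. Since the $X_i$ are independent, the joint law of $(X_1,\dots,X_t)$ factorises as the product of the laws of the $X_i$, which are precisely the first marginals of the $\mu_i$; summing $\mu$ over the second block of coordinates reproduces this product, and symmetrically for $Y$. Thus $\mu$ is a valid coupling of the two vectors with respect to the componentwise partial order on $\PP^t$.

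Next, I would compute the probability that $(X_1,\dots,X_t)\prec(Y_1,\dots,Y_t)$ componentwise under $\mu$. Because $\mu$ factorises, the events $A_i=\{(x,y)\in\PP^t\times\PP^t:x_i\prec y_i\}$ are independent, so
\[
\mu\!\left(\bigcap_{i=1}^t A_i\right)=\prod_{i=1}^t\mu_i(\{(x,y):x\prec y\})\ge\prod_{i=1}^t q_i,
\]
which gives the first assertion. For the statement on sums, I would push the coupling forward through the map $(x_1,\dots,x_t)\mapsto\sum_i x_i$: this map is monotone for the partial orders in question (in $\mathbb{N}$ or $\mathbb{N}^t$ this is immediate; for the hypergraph setting, the sum is interpreted as a union and again monotone), so $X_i\prec Y_i$ for every $i$ entails $\sum_i X_i\prec\sum_i Y_i$. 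The resulting image measure is a coupling of $\sum_i X_i$ and $\sum_i Y_i$ in which $\sum X_i\prec\sum Y_i$ holds with probability at least $\prod_i q_i$.

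There is no real obstacle here; the only point that needs a moment of care is that independence of the $X_i$'s and of the $Y_i$'s is exactly what makes the product measure a coupling with the correct marginals, and that the monotonicity of the sum (or union) map is what transfers the componentwise coupling to the one-dimensional statement. No auxiliary estimates are required.
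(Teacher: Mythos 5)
Your proof is correct and follows essentially the same route as the paper: the paper also takes the product measure $\mu(x_1,\ldots,x_t,y_1,\ldots,y_t)=\prod_{i=1}^t\mu_i(x_i,y_i)$ and observes that a "simple calculation" gives the claim. You have simply written out the calculation the paper omits — the verification of the marginals via independence, the product lower bound on the event $\bigcap_i\{x_i\prec y_i\}$, and the pushforward through the monotone sum (or union) map — all of which is sound.
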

\begin{proof}
For all $1\le i\le t$, let $\mu_i:\PP\times \PP\to [0;1]$ be a probability measure associated with a coupling existing by $X_i \coupling{q_i} Y_i$. Simple calculation shows that $\mu:\PP^{t}\times \PP^{t}\to [0;1]$ such that 
$$
\mu(x_1,\ldots,x_t,y_1,\ldots,y_t)=\prod_{i=1}^{t}\mu_i(x_i,y_i)
$$
implies the thesis.
\end{proof}

\subsection{Total variation distance}

Let $X$ and $Y$ be random variables with values in a countable set $\PP$. We define the total variation distance between $X$ and $Y$ by 
$$
\dtv{X}{Y}=\max_{A\subseteq\PP}|\Pr\{X\in A\}-\Pr\{Y\in A\}|=\frac{1}{2}\sum_{x\in \PP}|\Pr\{X=x\}-\Pr\{Y=x\}|. 
$$
Now let $\PP=\mathcal{G}$ be a set of hypergraphs (graphs) with a given vertex set. Let $\mathbb{G}_1$ and $\mathbb{G}_2$ be two random variables with values in $\mathcal{G}$. Since
$$
\dtv{\mathbb{G}_1}{\mathbb{G}_2}=\frac{1}{2}\sum_{G\in \mathcal{G}}|\Pr\{\mathbb{G}_1=G\}-\Pr\{\mathbb{G}_2=G\}|, 
$$
it is simple to construct a probability measure $\mu$ on $\mathcal{G}\times\mathcal{G}$ with marginal distributions as distributions of $\mathbb{G}_1$ and $\mathbb{G}_2$ such that $\mu\{(G,G):G\in \mathcal{G}\}=1-2\dtv{\mathbb{G}_1}{\mathbb{G}_2}$. This implies:
\begin{fact}\label{FaktDtvCoupling}
$$
\mathbb{G}_1\coupling{q}\mathbb{G}_2\quad\text{and}\quad\mathbb{G}_2\coupling{q'}\mathbb{G}_1,
$$
where $q,q'\ge 1-2\dtv{\mathbb{G}_1}{\mathbb{G}_2}$. 
\end{fact}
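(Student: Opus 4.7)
The plan is to build a single coupling measure $\mu$ on $\mathcal{G} \times \mathcal{G}$ whose marginals are the laws of $\mathbb{G}_1$ and $\mathbb{G}_2$ and which concentrates as much mass as possible on the diagonal $\Delta := \{(G,G) : G \in \mathcal{G}\}$. Since the subhypergraph relation $\subseteq$ on $\mathcal{G}$ is reflexive, any pair $(G,G) \in \Delta$ satisfies both $G \subseteq G$ and $G \subseteq G$ simultaneously. Hence the very same $\mu$ witnesses both of the required coupling relations $\mathbb{G}_1 \coupling{q} \mathbb{G}_2$ and $\mathbb{G}_2 \coupling{q'} \mathbb{G}_1$ with $q = q'$ equal to $\mu(\Delta)$.

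To construct $\mu$, I would use the standard maximal coupling. Write $p_G = \Pr\{\mathbb{G}_1 = G\}$, $q_G = \Pr\{\mathbb{G}_2 = G\}$, and $d = \dtv{\mathbb{G}_1}{\mathbb{G}_2}$. Put
\[
\mu(G,G) := \min(p_G, q_G) \quad \text{for } G \in \mathcal{G},
\]
and, when $d > 0$, distribute the residual masses $r_G := p_G - \min(p_G,q_G)$ and $s_G := q_G - \min(p_G,q_G)$ off the diagonal as a product,
\[
\mu(G,G') := \frac{r_G \, s_{G'}}{d} \quad \text{for } G \neq G',
\]
while for $d = 0$ the measure already lives on $\Delta$. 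A routine summation, using the identity $\sum_G r_G = \sum_G s_G = d$, then verifies that the marginals of $\mu$ are indeed the laws of $\mathbb{G}_1$ and $\mathbb{G}_2$.

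Finally I would read off the diagonal mass:
\[
\mu(\Delta) \;=\; \sum_{G \in \mathcal{G}} \min(p_G, q_G) \;=\; 1 - \tfrac{1}{2}\sum_{G \in \mathcal{G}} |p_G - q_G| \;=\; 1 - \dtv{\mathbb{G}_1}{\mathbb{G}_2} \;\ge\; 1 - 2\,\dtv{\mathbb{G}_1}{\mathbb{G}_2},
\]
which is the bound required in the statement (and in fact gives a slightly stronger one). There is no genuine obstacle here: the argument is essentially the textbook maximal coupling, and the only subtle point is the remark that a single diagonal coupling serves for both $\coupling{q}$ statements, which is handled for free by reflexivity of $\subseteq$.
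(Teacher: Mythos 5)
Your proof is correct and takes essentially the same approach as the paper, which merely asserts (without detail) the existence of a coupling $\mu$ concentrating mass on the diagonal — exactly the maximal coupling you spell out, combined with the same observation that reflexivity of $\subseteq$ makes the diagonal serve both directions at once. In fact your computation yields the sharper $\mu(\Delta)=1-\dtv{\mathbb{G}_1}{\mathbb{G}_2}$, whereas the paper only asserts $1-2\dtv{\mathbb{G}_1}{\mathbb{G}_2}$; given the paper's $\tfrac12$-normalized definition of total variation the extra factor of $2$ is superfluous, though the weaker bound is all that is needed downstream.
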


The following useful facts concerning total variation distance are Facts~3 and~4 in \cite{GpEquivalence}.
\begin{fact}\label{FaktDtvWarunkowe}
 Let $A$ and $A'$ be random variables with values in the same set. If there exist random variables $B$ and $B'$ such that for all possible $b$ the distribution of $A$ under condition $B=b$ and the distribution of $A'$ under condition $B'=b$ are the same, then
$$
\dtv{A}{A'}\le 2\dtv{B}{B'}.
$$
\end{fact}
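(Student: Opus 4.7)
The approach is the direct one: condition on the auxiliary variables $B$ and $B'$, exploit the assumed identity of conditional laws to cancel the "shape" of the distribution of $A$ (resp.\ $A'$), and bound what remains by the total variation distance between the conditioning variables.

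Concretely, I would fix an arbitrary measurable set $S$ in the common range of $A$ and $A'$, and apply the law of total probability:
$$
\Pr\{A\in S\}=\sum_{b}\Pr\{A\in S\mid B=b\}\Pr\{B=b\},
$$
$$
\Pr\{A'\in S\}=\sum_{b}\Pr\{A'\in S\mid B'=b\}\Pr\{B'=b\}.
$$
By hypothesis, $\Pr\{A\in S\mid B=b\}=\Pr\{A'\in S\mid B'=b\}$ for every admissible $b$; write $f(b)$ for this common value, which lies in $[0,1]$. Subtracting gives
$$
\Pr\{A\in S\}-\Pr\{A'\in S\}=\sum_{b}f(b)\bigl(\Pr\{B=b\}-\Pr\{B'=b\}\bigr).
$$

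Taking absolute values and using $0\le f(b)\le 1$, the triangle inequality yields
$$
\bigl|\Pr\{A\in S\}-\Pr\{A'\in S\}\bigr|\le\sum_{b}\bigl|\Pr\{B=b\}-\Pr\{B'=b\}\bigr|=2\,\dtv{B}{B'},
$$
by the alternative formula for the total variation distance given just before the statement. Maximising the left-hand side over $S$ gives the claim. The factor $2$ is a direct artefact of the normalisation $\dtv{\cdot}{\cdot}=\tfrac12\sum|\cdot|$ and cannot be avoided by this argument alone.

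There is really no "hard step" here: the argument is a one-line computation once the conditional identity is substituted. The only thing to be mindful of is ensuring that the sum is well-defined when some $b$ satisfies $\Pr\{B=b\}=0$ or $\Pr\{B'=b\}=0$ (in which case the corresponding conditional probability is vacuously handled by restricting the sum to those $b$ where the conditioning makes sense for at least one of $B,B'$; the other term contributes nothing). This mirrors the bookkeeping already carried out in the proof of Fact~\ref{FaktCouplingPrzechodniosc} via the set $\PP^{*}$.
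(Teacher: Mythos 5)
Your proof is correct and is the standard argument: condition on the auxiliary variables, cancel the common conditional law, and bound the remainder by the $\ell^1$ distance between the laws of $B$ and $B'$. Note that the paper itself does not prove this fact — it is cited verbatim as Fact 4 of \cite{GpEquivalence} — so there is no in-paper proof to compare against; your write-up is exactly what one would supply. One small remark: the factor $2$ is not actually forced by the conditioning argument. Splitting the sum $\sum_b f(b)\bigl(\Pr\{B=b\}-\Pr\{B'=b\}\bigr)$ over the sets where the difference is positive and where it is negative, and using $0\le f\le 1$ on each part separately, bounds each part by $\dtv{B}{B'}$ in absolute value and hence the whole expression by $\dtv{B}{B'}$ (equivalently, one can couple $B$ and $B'$ optimally and set $A=A'$ on the event $B=B'$). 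The weaker bound with the factor $2$ is nevertheless exactly what the stated fact asserts, so your proof establishes it.
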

\begin{fact}\label{FaktDtvRoznica}
Let $A$ and $A'$ be two random variables. If there exists a probability space on which random variables $B$ and $B'$ are both defined and have probability distribution as $A$ and $A'$, respectively, then
$$
\dtv{A}{A'}\le\Pr\{B\neq B'\}.
$$ 
\end{fact}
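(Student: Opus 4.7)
The plan is to reduce the claim to the joint distribution on the hypothesized common probability space and then control each set-indexed discrepancy by the coupling error. Since $\dtv{\cdot}{\cdot}$ depends only on the marginal laws, and $B,B'$ have the same marginals as $A,A'$, it suffices to establish $\dtv{B}{B'}\le\Pr\{B\neq B'\}$ directly for the coupled pair $(B,B')$.

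Next I would fix an arbitrary $S\subseteq\PP$ and use the joint law of $(B,B')$ to write
\[
\Pr\{B\in S\}-\Pr\{B'\in S\}=\Pr\{B\in S,\,B'\notin S\}-\Pr\{B\notin S,\,B'\in S\},
\]
which follows by adding and subtracting the symmetric term $\Pr\{B\in S,\,B'\in S\}$ on each side. Dropping the non-negative second summand and using the inclusion $\{B\in S,\,B'\notin S\}\subseteq\{B\neq B'\}$ yields
\[
\Pr\{B\in S\}-\Pr\{B'\in S\}\le\Pr\{B\in S,\,B'\notin S\}\le\Pr\{B\neq B'\}.
\]
Interchanging the roles of $B$ and $B'$ delivers the matching bound in the opposite direction, so that $|\Pr\{B\in S\}-\Pr\{B'\in S\}|\le\Pr\{B\neq B'\}$ for every $S\subseteq\PP$.

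Taking the supremum over $S$ in the definition of the total variation distance then gives $\dtv{B}{B'}\le\Pr\{B\neq B'\}$, and the marginal-equality observation from the first paragraph upgrades this to $\dtv{A}{A'}\le\Pr\{B\neq B'\}$, as required. There is no serious obstacle here: the argument rests on the purely set-theoretic containment $\{B\in S,\,B'\notin S\}\subseteq\{B\neq B'\}$ together with the standard fact that total variation distance is invariant under passage to any coupling with the prescribed marginals.
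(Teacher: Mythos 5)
Your proof is correct. The paper does not actually supply a proof of this fact; it simply cites it as Fact~4 from \cite{GpEquivalence}. Your argument is the standard one: the identity $\Pr\{B\in S\}-\Pr\{B'\in S\}=\Pr\{B\in S,\,B'\notin S\}-\Pr\{B\notin S,\,B'\in S\}$ holds, dropping the nonnegative subtracted term and using $\{B\in S,\,B'\notin S\}\subseteq\{B\neq B'\}$ gives the one-sided bound, symmetry gives the other side, and taking the supremum over $S$ together with the observation that $d_{TV}$ depends only on marginals finishes the proof. No gaps.
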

\noindent
We also use a standard result (see for example~\cite{KsiazkaPoisson} equation (1.23)).
\begin{fact}\label{FaktDtvPoisson}
Let $A$ be a random variable with the binomial distribution $\Bin(\hat{n},\hat{p})$ and let $A'$ be a random variable with the Poisson distribution $\Po(\hat{n}\hat{p})$. Then
$$
\dtv{A}{A'}\le \hat{p}.
$$ 
\end{fact}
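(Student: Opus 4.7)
Since the naive coupling of $\hat n$ independent Bernoulli$(\hat p)$--Poisson$(\hat p)$ pairs yields only the bound $\dtv{A}{A'} \le \hat n \hat p^2$, which is weaker than $\hat p$ as soon as $\hat n \hat p > 1$, the plan is to use the Chen--Stein method: the \emph{Stein factor} will supply the extra $1/(\hat n \hat p)$ needed.

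Set $\lambda = \hat n \hat p$. For each target set $B \subseteq \mathbb{N}_0$, I would first solve the Stein equation
$$
\lambda\, g_B(k+1) - k\, g_B(k) = \mathbf{1}_{\{k \in B\}} - \Pr\{A' \in B\}, \qquad k \ge 0,
$$
with $g_B(0) = 0$, and establish the Stein--factor bound
$$
\|\Delta g_B\|_\infty := \sup_{k \ge 0} |g_B(k+1) - g_B(k)| \le \frac{1 - e^{-\lambda}}{\lambda}.
$$
This is the main analytic step; it follows from the explicit representation of $g_B$ as a partial sum of the Poisson probability mass function, after reducing by monotonicity in $B$ to the case where $B$ is an initial segment $\{0,1,\dots,k_0\}$ and then estimating directly.

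With the Stein factor in hand, I would write $A = \sum_{i=1}^{\hat n} X_i$ with $X_i$ independent Bernoulli$(\hat p)$, and set $A^{(i)} := A - X_i$, which is independent of $X_i$. Using $X_i \in \{0,1\}$ and independence,
\begin{align*}
\E[A\, g_B(A)] &= \hat n\hat p\, \E[g_B(A^{(1)} + 1)],\\
\lambda\, \E[g_B(A + 1)] &= \hat n\hat p\, \E[g_B(A^{(1)} + X_1 + 1)],
\end{align*}
so, subtracting and applying $g_B(k+1+X_1) - g_B(k+1) = X_1\, \Delta g_B(k+1)$ together with $\E[X_1] = \hat p$,
$$
\Pr\{A \in B\} - \Pr\{A' \in B\} = \E\bigl[\lambda\, g_B(A+1) - A\, g_B(A)\bigr] = \hat n \hat p^2\, \E[\Delta g_B(A^{(1)} + 1)].
$$
In absolute value this is at most $\hat n \hat p^2 \cdot (1 - e^{-\lambda})/\lambda = \hat p (1 - e^{-\lambda}) \le \hat p$, and taking the supremum over $B$ gives $\dtv{A}{A'} \le \hat p$.

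The main obstacle is the Stein--factor bound on $\|\Delta g_B\|_\infty$; everything else is a short algebraic computation exploiting the independence of $X_i$ from $A^{(i)}$. This is a standard result, spelled out as equation~(1.23) of~\cite{KsiazkaPoisson}.
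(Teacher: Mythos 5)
Your Chen--Stein argument is correct, and it is exactly the proof behind equation (1.23) of Barbour--Holst--Janson that the paper cites without reproducing: the paper itself gives no proof of this fact, only the reference. Your Stein-equation setup, the local decomposition $A = A^{(i)} + X_i$, the Stein-factor bound $\|\Delta g_B\|_\infty \le (1-e^{-\lambda})/\lambda$, and the final computation $\hat n\hat p^2 \cdot (1-e^{-\lambda})/\lambda = \hat p(1-e^{-\lambda}) \le \hat p$ all check out, so your proof matches the cited source.
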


\subsection{Coupon collector model}\label{SubsectionCouponCollector}

We define two auxiliary random variables, which are generalised versions of random variables defined in \cite{GpEquivalence}. Let $K\ge 2$ be a given constant integer, $M$ be any random variable with values in $\mathbb{N}$, $\overline{n}=(n_2,\ldots,n_K)$ be a vector of positive integers and $\overline{P}=(P_2,\ldots,P_K)$ be a vector of nonnegative reals such that $\sum_{k=2}^K n_kP_k\le 1$. Assume now that we have $\sum_{k=2}^K n_k$ coupons
$\bigcup_{k=2}^{K}\{c^{(k)}_{1}, \ldots ,c^{(k)}_{n_k}\}$ and one blank coupon $d_0$. We make $M$ independent draws, with replacement, such that in each draw
\begin{align*}
\Pr\{c^{(k)}_{i}\text{ is chosen}\}&=P_k,\quad\text{for }2\le k\le K, 1\le i\le n_k;\\
\Pr\{d_0\text{ is chosen}\}&=1-\sum_{k=2}^K n_kP_k.
\end{align*}
In this scheme we define $\kr{k}{i}(M)$ to be a random variable denoting the number of times that a coupon   $c^{(k)}_{i}$ was chosen and 
\begin{align*}
\kra{k}{i}(M)&=
\begin{cases}
1&\text{ if } \kr{k}{i}(M)\ge 1;\\
0&\text{ otherwise}. 
\end{cases}\\
\end{align*}
The first auxiliary random variable is
\begin{equation}\label{RownanieDefinicjaXM}
X(M)=X(\overline{n},\overline{P},M)=(\kra{2}{}(M),\ldots,\kra{K}{}(M)), \text{ where }
\end{equation}
$$
\kra{k}{}(M)=\sum_{i=1}^{n_k}\kra{k}{i}(M).
$$
The second random variable is
\begin{equation}\label{RownanieDefinicjaY}
Y=Y(\overline{n},\overline{P'})=(\kraw{2}{},\ldots,\kraw{K}{})
\end{equation}
where $\overline{P'}=(P'_2,\ldots,P'_K)$ is a vector such that $P_k'\le 1$ for all $2\le k\le K$ and $\kraw{k}{}$, $2\le k\le K$, are independent random variables with the binomial distribution $\Bin(n_k,P_k')$.

A simple observation stated below is a generalisation of a part of the proof of Claim~1~in~\cite{GpEquivalence} and may be shown by careful calculation.  

\begin{fact}\label{FaktXMPoisson}
Let $M$ be a random variable with the Poisson distribution $\Po(\lambda)$, then $\kr{k}{i}(M)$, $2\le k\le K$ and $1\le i\le n_k$, are independent random variables with the Poisson distribution $\Po(\lambda P_k)$. Moreover $\kra{k}{}(M)$, $2\le k\le K$, are independent random variables with the binomial distribution $\Bin(n_k, 1-\exp(-\lambda P_k))$. Therefore $X(M)$ and $Y$ have the same distribution for $P'_k=1-\exp(-\lambda P_k)$. 
\end{fact}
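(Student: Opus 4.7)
The plan is to prove the statement by a direct application of the standard Poisson thinning/splitting property, followed by simple bookkeeping to pass from the Poisson counts to their indicators and then to the sums $\kra{k}{}(M)$. First I would observe that, conditionally on $M=N$, the joint vector of all counts $\bigl(\kr{k}{i}(M)\bigr)_{2\le k\le K,\,1\le i\le n_k}$ together with the number of blank coupons drawn is multinomial with $N$ trials and category probabilities $P_k$ (appearing once for each of the $n_k$ coupons of type $k$) and $1-\sum_{k=2}^{K}n_kP_k$ for the blank. Combining this with $\Pr\{M=N\}=e^{-\lambda}\lambda^N/N!$, the multinomial formula rearranges into a product of Poisson probabilities with parameters $\lambda P_k$ (and $\lambda(1-\sum_{k}n_kP_k)$ for the blank), which proves that the $\kr{k}{i}(M)$ are independent $\Po(\lambda P_k)$ random variables.

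Given this, since $\kra{k}{i}(M)=\mathbf{1}\{\kr{k}{i}(M)\ge 1\}$ is a measurable function of $\kr{k}{i}(M)$ alone, independence of the $\kra{k}{i}(M)$ is inherited from the independence of the underlying Poissons, and
$$
\Pr\{\kra{k}{i}(M)=1\}=1-\Pr\{\Po(\lambda P_k)=0\}=1-\exp(-\lambda P_k).
$$
For fixed $k$, the sum $\kra{k}{}(M)=\sum_{i=1}^{n_k}\kra{k}{i}(M)$ is then a sum of $n_k$ i.i.d.\ Bernoulli with parameter $1-\exp(-\lambda P_k)$ random variables, hence has distribution $\Bin(n_k,1-\exp(-\lambda P_k))$; sums for different $k$ are independent because they are functions of disjoint collections of independent variables. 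Setting $P'_k=1-\exp(-\lambda P_k)$ and comparing with \eqref{RownanieDefinicjaY} finishes the identification of $X(M)$ and $Y$ in distribution.

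I do not anticipate a genuine obstacle here; the only step that deserves a careful check is the factorization in the first paragraph, but it is routine manipulation of factorials and exponentials. The whole statement is essentially a repackaging of the classical fact that a Poisson number of i.i.d.\ categorical trials yields independent Poisson category counts, specialized to the coupon-collector scheme of Section~\ref{SubsectionCouponCollector}.
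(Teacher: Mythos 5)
Your argument is correct and is exactly the intended one: the paper offers no explicit proof, merely noting the fact is ``a generalization of the part of the proof of Claim~1 in \cite{GpEquivalence}'' and ``may be shown by careful calculation,'' and that calculation is precisely the Poisson--multinomial thinning you describe, followed by the bookkeeping step passing from the independent $\Po(\lambda P_k)$ counts to indicators, to binomial sums, and finally to the identification $X(M)\overset{d}{=}Y$ with $P'_k=1-\exp(-\lambda P_k)$. Nothing is missing; you have simply written out the routine verification the paper defers to the reader.
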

\noindent It is also simple to show the following fact.
\begin{fact}\label{FactCouplingWarunkowe}
Let $M$ and $M'$ be random variables with values in $\mathbb{N}$. If 
$$
M\coup{o(1)}M',
$$
then
$$
X(M)\coup{o(1)}X(M').
$$
\end{fact}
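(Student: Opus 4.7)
The plan is to exploit the fact that $X(\cdot)$ is, by construction, a coordinatewise nondecreasing function of the number of draws $M$. Concretely, if one performs the draws from the coupon collector scheme sequentially, then for every $2 \le k \le K$ and $1 \le i \le n_k$ the indicator $\kra{k}{i}(M)$ is a nondecreasing function of $M$, hence so is $\kra{k}{}(M) = \sum_i \kra{k}{i}(M)$. Therefore if $m \le m'$ then $X(m) \preceq X(m')$ in the coordinatewise partial order on $\mathbb{N}^{K-1}$, provided that both $X(m)$ and $X(m')$ are evaluated on the \emph{same} infinite sequence of draws.

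With this monotonicity in hand, the coupling is straightforward. By the hypothesis $M \coup{o(1)} M'$, there is a probability space $\Omega_0$ carrying a pair $(M, M')$ with the correct marginals such that $M \le M'$ with probability $1 - o(1)$. I would enlarge this space to $\Omega = \Omega_0 \times \Omega_1$, where $\Omega_1$ carries an i.i.d.\ sequence $(D_j)_{j \ge 1}$ of draws from the coupon distribution (assigning mass $P_k$ to each $c^{(k)}_i$ and $1-\sum_k n_k P_k$ to $d_0$), independently of $(M, M')$. Set $\widetilde X := X(M)$ computed from the draws $D_1, \ldots, D_M$, and $\widetilde X' := X(M')$ computed from $D_1, \ldots, D_{M'}$. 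Conditioning on $M = m$ and using independence of $\Omega_1$ from $(M, M')$, the conditional law of $\widetilde X$ is exactly the law of $X(m)$, so by averaging $\widetilde X$ has the same distribution as $X(M)$; similarly for $\widetilde X'$ and $X(M')$.

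On the event $\{M \le M'\}$, which has probability $1 - o(1)$, the monotonicity of the coupon process in the number of draws gives $\widetilde X \preceq \widetilde X'$. Hence the measure induced by $(\widetilde X, \widetilde X')$ is the required coupling witnessing $X(M) \coup{o(1)} X(M')$.

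The only point requiring any care, and what I would call the ``main obstacle'' (though it is still quite mild), is to make sure that the extended space $\Omega$ really realises the correct joint law: i.e., that conditioning on $M$ (respectively $M'$) does not alter the distribution of $\widetilde X$ (respectively $\widetilde X'$). This follows because the sequence $(D_j)_{j \ge 1}$ is drawn independently of $(M,M')$, so the standard ``freeze $M$, then draw'' argument applies. After this check, the conclusion is just monotonicity plus the union/containment bound on the event $\{M \le M'\}$.
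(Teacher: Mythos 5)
Your proof is correct and is exactly the argument one would expect here; the paper states the fact without proof (``It is also simple to show the following fact''), and your monotonicity-plus-shared-draws coupling is the natural way to fill that gap. The two key observations — that on a fixed infinite draw sequence $X(m)\preceq X(m')$ coordinatewise whenever $m\le m'$, and that putting the draw sequence on a factor independent of the coupled pair $(M,M')$ preserves both marginals — are both handled cleanly, so the coupling event $\{M\le M'\}$ transfers directly to $\{\widetilde X\preceq \widetilde X'\}$ with the same $1-o(1)$ probability.
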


\subsection{Chernoff's bound}
For the proofs of Chernoff's bound see Theorem~2.1 in \cite{KsiazkaJLR}.
\begin{lem}
Let $X$ be a random variable with the binomial distribution and $\lambda=\E X$. Let $a \ge \lambda$, then
$$
\Pra{X\ge a}\le \exp\left(-\lambda-a\ln \frac{a}{\lambda} + a\right)
$$
\end{lem}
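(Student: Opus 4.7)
The plan is to prove the bound by the standard exponential-moments (Chernoff--Bernstein) argument, optimised in the parameter of the Markov inequality.

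First I would fix $t\ge 0$ and apply Markov's inequality to the nonnegative random variable $e^{tX}$, obtaining
$$
\Pra{X\ge a}=\Pra{e^{tX}\ge e^{ta}}\le e^{-ta}\,\E e^{tX}.
$$
Next I would compute (or rather bound) the moment generating function of a binomial variable. Writing $X=\sum_{i=1}^N \xi_i$ with independent Bernoulli summands $\xi_i$ of mean $p_i$, independence gives $\E e^{tX}=\prod_{i=1}^N(1-p_i+p_ie^t)=\prod_{i=1}^N\bigl(1+p_i(e^t-1)\bigr)$, and the elementary inequality $1+x\le e^x$ yields
$$
\E e^{tX}\le \exp\!\Bigl(\sum_{i=1}^N p_i(e^t-1)\Bigr)=\exp\bigl(\lambda(e^t-1)\bigr),
$$
where $\lambda=\E X=\sum p_i$. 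Combining the two displays,
$$
\Pra{X\ge a}\le \exp\bigl(\lambda(e^t-1)-ta\bigr)\qquad\text{for every }t\ge 0.
$$

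Finally I would optimise the right-hand side over $t\ge 0$. Differentiating the exponent in $t$ gives the stationary point $t^*=\ln(a/\lambda)$, and the hypothesis $a\ge\lambda$ ensures $t^*\ge 0$, so this value is admissible. Substituting $e^{t^*}=a/\lambda$ produces an exponent equal to $\lambda\bigl(\tfrac{a}{\lambda}-1\bigr)-a\ln\tfrac{a}{\lambda}=a-\lambda-a\ln\tfrac{a}{\lambda}$, which is exactly the bound claimed.

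There is no real obstacle here; the only point that warrants attention is checking that the optimal $t^*$ is nonnegative (so that Markov's inequality on $e^{tX}$ is valid), which is precisely guaranteed by the assumption $a\ge\lambda$. The argument works verbatim both for $\mathrm{Bin}(N,p)$ with identical $p_i$ and, more generally, for any sum of independent Bernoulli variables of expectation $\lambda$, which is all the paper needs in subsequent applications.
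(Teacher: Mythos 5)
Your proof is correct and is exactly the standard exponential-moment (Chernoff) derivation of this bound; the paper does not supply its own proof but simply cites Theorem~2.1 of Janson--\L uczak--Ruci\'nski, which is established by precisely this argument (Markov on $e^{tX}$, the bound $\E e^{tX}\le\exp(\lambda(e^t-1))$, and optimisation at $t^*=\ln(a/\lambda)$, valid since $a\ge\lambda$).
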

\noindent After careful calculation we obtain the following lemma.
\begin{lem}\label{RownanieChernoffDokladny1}
Let $t\ge 1$ be an integer and $X_n$ be a sequence of random variables with the binomial distribution, such that $\E X_n=\lambda_n$. Let $\eps>0$ and $\omega(n)$  be any function tending to infinity. If
\begin{equation}
a_n=a_n(\lambda_n,t,\eps)=
\begin{cases}
(t+\eps) \ln n / (\ln \ln n - \ln \lambda_n),&\text{ for }\lambda_{n}=o(\ln n);\\
\omega(n)\lambda_n,&\text{ for } \lambda_n = \Theta(\ln n);\\
(1+\eps)\lambda_n,&\text{ for }\ln n= o (\lambda_n),
\end{cases}
\end{equation}
then
$$
\Pra{X_n\ge a_n}=o\left(n^{-t}\right).
$$
\end{lem}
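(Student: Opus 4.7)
The plan is to apply directly the preceding Chernoff estimate
\[
\Pra{X_n\ge a_n}\le \exp\bigl(-\lambda_n - a_n\ln(a_n/\lambda_n) + a_n\bigr)
\]
and show, by a case analysis mirroring the three-part definition of $a_n$, that the exponent is at most $-(t+\delta)\ln n$ for some positive $\delta$; exponentiating then gives the desired $o(n^{-t})$ bound. In each regime one first checks $a_n \ge \lambda_n$, which is required for the Chernoff bound to apply.

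For $\lambda_n = o(\ln n)$ I would set $u = \lambda_n/\ln n = o(1)$ and use $\ln\ln n - \ln\lambda_n = -\ln u \to \infty$. This shows $\lambda_n(\ln\ln n - \ln\lambda_n) = (-u\ln u)\ln n = o(\ln n)$, which both confirms $a_n \ge \lambda_n$ and gives $a_n = (t+\eps)\ln n/(-\ln u) = o(\ln n)$. Moreover
\[
\ln(a_n/\lambda_n) = \ln(t+\eps) + (\ln\ln n - \ln\lambda_n) - \ln(\ln\ln n - \ln\lambda_n) = (\ln\ln n - \ln\lambda_n)(1 - o(1)),
\]
so $a_n \ln(a_n/\lambda_n) = (t+\eps)(1-o(1))\ln n$. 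Combining, the exponent is $-(t+\eps - o(1))\ln n$, which yields $o(n^{-t})$ since $\eps > 0$.

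For $\lambda_n = \Theta(\ln n)$ with $a_n = \omega(n)\lambda_n$, we have $\ln(a_n/\lambda_n) = \ln\omega(n)\to\infty$, so the exponent is at most $-a_n(\ln\omega(n) - 1) = -\omega(n)\lambda_n(\ln\omega(n)-1)$, which is $\ll -\ln n$. For $\ln n = o(\lambda_n)$ with $a_n = (1+\eps)\lambda_n$, the exponent simplifies to $-\lambda_n\bigl((1+\eps)\ln(1+\eps) - \eps\bigr)$; the bracketed quantity is a strictly positive constant depending only on $\eps$, and $\lambda_n/\ln n \to \infty$, so again the exponent is $\ll -\ln n$. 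The only place requiring genuine care is the first regime, where the asymptotic evaluation of $\ln(a_n/\lambda_n)$ and the verification $a_n \ge \lambda_n$ must be uniform across all possible rates of decay or growth of $\lambda_n$ below $\ln n$; the substitution $u = \lambda_n/\ln n$ handles this uniformly. Beyond that, the proof is a routine substitution into the given Chernoff inequality.
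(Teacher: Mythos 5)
Your proof is correct and is exactly the "careful calculation" the paper alludes to: the paper gives no explicit argument for this lemma, merely stating that it follows from the Chernoff bound displayed immediately before it, and your three-case analysis of the exponent (with the substitution $u=\lambda_n/\ln n$ handling the first regime uniformly) is the natural way to carry that out.
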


\begin{lem}\label{RownanieChernoffDokladny}
Let $X_n$ be a sequence of random variables with the binomial distribution. Then 
\begin{equation}\label{RownanieChernoff}
\begin{split}
&\Pr\left\{X_n \le \E  X_n - t_n\right\}\le \exp\left(-\frac{t_n^2}{2\E X_n}\right),
\ \ \ \ \ \ \ \ \  \text{ for }t_n\ge 0;\\
&\Pr\left\{X_n \ge \E X_n + t_n\right\}\le \exp\left(-\frac{3t_n^2}{2(3\E X_n + t_n)}\right),
\text{ for }t_n\ge 0.
\end{split}
\end{equation}
\end{lem}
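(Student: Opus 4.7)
The plan is to derive both bounds from the standard exponential Chernoff method, using the preceding lemma for the upper tail and its direct analogue for the lower tail, in each case reducing to an elementary one-variable inequality.

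For the upper tail, applying the preceding lemma with $a = \lambda_n + t_n$ immediately yields
\begin{equation*}
\Pra{X_n \ge \lambda_n + t_n} \le \exp\bigl(-\lambda_n - (\lambda_n+t_n)\ln\tfrac{\lambda_n+t_n}{\lambda_n} + (\lambda_n+t_n)\bigr) = \exp\bigl(-\lambda_n\,\varphi(t_n/\lambda_n)\bigr),
\end{equation*}
where $\varphi(x) = (1+x)\ln(1+x) - x$. The claimed estimate $\exp\bigl(-3t_n^2/(2(3\lambda_n + t_n))\bigr)$ then reduces to the well-known elementary inequality
\begin{equation*}
\varphi(x) \ge \frac{3x^2}{2(3+x)}, \qquad x \ge 0,
\end{equation*}
which can be verified by observing that both sides vanish at $x = 0$ and comparing derivatives on $[0,\infty)$.

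For the lower tail, I would first establish the companion Chernoff bound $\Pra{X_n \le a} \le \exp(-\lambda_n - a\ln(a/\lambda_n) + a)$ for $0 \le a \le \lambda_n$. Writing $X_n$ as a sum of independent Bernoulli variables gives $\E\exp(sX_n) \le \exp(\lambda_n(e^s - 1))$ for every $s \in \mathbb{R}$, and the exponential Markov inequality with $s = \ln(a/\lambda_n) < 0$ produces the estimate. Substituting $a = \lambda_n - t_n$ (if $t_n > \lambda_n$ the event $\{X_n \le \lambda_n - t_n\}$ is empty, so there is nothing to prove) rewrites the exponent as $-\lambda_n\,\psi(t_n/\lambda_n)$ with $\psi(x) = (1-x)\ln(1-x) + x$, and the stated bound reduces to $\psi(x) \ge x^2/2$ on $[0,1]$. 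That inequality follows from $\psi(0) = \psi'(0) = 0$ and $\psi''(x) = 1/(1-x) \ge 1$ by integrating twice.

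The only genuine obstacle is the verification of the two scalar inequalities $\varphi(x) \ge 3x^2/(2(3+x))$ and $\psi(x) \ge x^2/2$; both are routine calculus exercises, which matches the paper's phrasing that the result is obtained \emph{after careful calculation}. No probabilistic subtlety enters beyond the standard moment-generating-function argument.
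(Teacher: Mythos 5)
The paper does not supply a proof of this lemma: it is stated as a standard fact, implicitly following from the entropy-form Chernoff bound quoted just before it (Theorem 2.1 in \cite{KsiazkaJLR}), which is exactly the bound that appears in that reference in this Bernstein-like form. Your derivation is correct and is essentially the textbook route: for the upper tail you specialize the quoted lemma with $a=\lambda_n+t_n$ and invoke $\varphi(x)=(1+x)\ln(1+x)-x\ge 3x^2/(2(3+x))$; for the lower tail you derive the companion entropy bound by the moment-generating-function argument with $s=\ln(a/\lambda_n)<0$, and then invoke $\psi(x)=(1-x)\ln(1-x)+x\ge x^2/2$, which you justify correctly via $\psi(0)=\psi'(0)=0$ and $\psi''(x)=1/(1-x)\ge 1$. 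Your observation that the event is empty when $t_n>\lambda_n$ is the right way to dispose of the boundary case. The only piece you leave at the level of ``routine calculus'' is $\varphi(x)\ge 3x^2/(2(3+x))$, which does require one non-obvious step (the cleanest is $\ln(1+u)\ge 2u/(2+u)$ followed by integration, or a two-derivative comparison); since the paper itself cites this bound from the literature without proof, this is an acceptable level of detail. In short, the proposal is correct and fills in a proof the paper intentionally omits, using the approach the paper's citation points to.
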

\noindent It is also possible to formulate the version of Chernoff's bound for random variables with the Poisson distribution.
\begin{lem}
Let $X_n$ be a sequence of random variables with the Poisson distribution $\Po(\lambda)$ and $i>0$ be any constant, then 
\begin{equation}\label{RownanieChernoffPoisson}
\begin{split}
&\Pr\left\{X_n \le \E X_n - t_n\right\}\le \exp\left(-\frac{t_n^2}{2\E X_n}\right)+o\left(\frac{1}{n^{i}}\right),
\ \ \ \ \ \ \ \ \  \text{ for }t_n\ge 0;\\
&\Pr\left\{X_n \ge \E X_n + t_n\right\}\le \exp\left(-\frac{3t_n^2}{2(3\E X_n + t_n)}\right)+o\left(\frac{1}{n^{i}}\right),
\text{ for }t_n\ge 0.
\end{split}
\end{equation}
\end{lem}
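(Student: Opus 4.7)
The plan is to deduce the Poisson tail estimates from the binomial ones in Lemma~\ref{RownanieChernoffDokladny} via Poisson approximation, paying for the approximation with the additive $o(n^{-i})$ term.

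First I would construct, for each $n$, a binomial companion $Y_n$ to $X_n$ with the same mean. Concretely, set $N_n = \lceil \lambda\, n^{i}\, \omega(n)\rceil$, where $\omega(n)\to\infty$ is any sufficiently slowly growing function (and $\lambda$ may depend on $n$; we may assume $\lambda\ge 1$, handling the bounded-$\lambda$ case separately by the same argument with a constant). Let $Y_n\sim \Bin(N_n,\lambda/N_n)$. Then $\E Y_n=\lambda=\E X_n$, while Fact~\ref{FaktDtvPoisson} gives
$$
\dtv{X_n}{Y_n}\le \frac{\lambda}{N_n}\le \frac{1}{n^{i}\,\omega(n)}=o\!\left(\frac{1}{n^{i}}\right).
$$

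Next I would apply Lemma~\ref{RownanieChernoffDokladny} to the binomial variable $Y_n$. Since $\E Y_n=\lambda=\E X_n$, the quantitative Chernoff estimates read
$$
\Pr\{Y_n\le \E X_n-t_n\}\le \exp\!\left(-\tfrac{t_n^2}{2\E X_n}\right),\qquad \Pr\{Y_n\ge \E X_n+t_n\}\le \exp\!\left(-\tfrac{3t_n^2}{2(3\E X_n+t_n)}\right).
$$
Now transport these bounds from $Y_n$ to $X_n$: because the sets $\{x\le \E X_n-t_n\}$ and $\{x\ge \E X_n+t_n\}$ are Borel subsets of $\mathbb{N}$, the definition of total variation distance yields
$$
\bigl|\Pr\{X_n\in A\}-\Pr\{Y_n\in A\}\bigr|\le \dtv{X_n}{Y_n}=o(n^{-i})
$$
for any such $A$. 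Adding this error term to each of the two binomial bounds produces exactly the asserted inequalities in \eqref{RownanieChernoffPoisson}.

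There is no real obstacle here; the only mild care is in the choice of $N_n$ so that $\lambda/N_n\le 1$ (a valid binomial probability) while still $\lambda/N_n=o(n^{-i})$. Picking $N_n$ as above with any $\omega(n)\to\infty$ satisfies both simultaneously, uniformly in whether $\lambda$ is bounded or grows with $n$, so the argument is uniform across the regimes that will be used later in the paper.
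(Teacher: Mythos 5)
Your proof is correct and follows essentially the same route as the paper: the paper compares $X_n\sim\Po(\lambda)$ with a binomial $\Bin(\lambda n^{i+1},1/n^{i+1})$ of the same mean, invokes Fact~\ref{FaktDtvPoisson} to bound the total variation distance by $n^{-(i+1)}=o(n^{-i})$, and then applies Lemma~\ref{RownanieChernoffDokladny} to the binomial companion. Your choice of $N_n=\lceil\lambda n^i\omega(n)\rceil$ is a cosmetic variant of the paper's $\lambda n^{i+1}$; the argument is otherwise identical.
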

\begin{proof}
It follows by \eqref{RownanieChernoff} applied to random variable with the binomial distribution $\Bin(\lambda n^{i+1},1/n^{i+1})$, definition of the total variation distance and Fact~\ref{FaktDtvPoisson}.
\end{proof}

\section{Coupling of $\Gnm{}{p}$ and $\Gn{\hat{p}}$}\label{SectionCoupling}

\subsection{Relation between $\Hnm{k}{p}$ and $\Hn{k}{1-\exp(-mp^k(1-p)^{n-k})}$}

As it is pointed out in Introduction, in the proof $\Gnm{}{p}$ is related to $\Gn{\hat{p}}$ trough $\Gnm{k}{p}=G\Hnm{k}{p}$ and $G\Hn{k}{1-\exp(-mp^k(1-p)^{n-k})}$. In the subsection a lemma, which shows relations between
$\Hnm{k}{p}$ and $\Hn{k}{1-\exp(-mp^k(1-p)^{n-k})}$, is proved.

\begin{lem}\label{LematDtv}
Let $K\ge 2$ be a constant integer and $p=o(1/n)$, then
$$
\dtv{\bigcup_{k=2}^{K}\Hnm{k}{p}}{\bigcup_{k=2}^{K}\Hn{k}{1-\exp(-mp^k(1-p)^{n-k})}}=o(1),
$$  
where $\Hn{k}{1-\exp(-mp^k(1-p)^{n-k})}$ are independent random hypergraphs.
\end{lem}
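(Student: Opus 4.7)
The plan is to realize both hypergraphs in the statement as deterministic functions of a single ``allocation'' vector, in the spirit of Subsection~\ref{SubsectionCouponCollector}. For every subset $S\subseteq\V$ with $2\le|S|\le K$, set $N_S=|\{w\in\W:V(w)=S\}|$, $P_k=p^k(1-p)^{n-k}$, and $P^*=\sum_{k=2}^K\binom{n}{k}P_k$. The vector $(N_S)_S$ has a multinomial-with-residual law: $m$ independent trials, each falling into category $S$ with probability $P_{|S|}$ and into a ``blank'' category (any $V(w)$ of size not in $\{2,\dots,K\}$) with probability $1-P^*$. The crucial observation is that the edge set of $\bigcup_{k=2}^K\Hnm{k}{p}$ is the deterministic function $F((N_S)_S)=\{S:N_S\ge 1\}$ of this vector.

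Next I would introduce the Poissonized counterpart: let $(\tilde N_S)_S$ be mutually independent with $\tilde N_S\sim\Po(mP_{|S|})$. This is exactly the vector one obtains by rerunning the same experiment with a $\Po(m)$-distributed number of objects, so by Fact~\ref{FaktXMPoisson} the indicators $\{\tilde N_S\ge 1\}$ are mutually independent Bernoulli variables with success probabilities $1-\exp(-mP_{|S|})$. Hence $F((\tilde N_S)_S)$ has precisely the law of $\bigcup_{k=2}^K\Hn{k}{1-\exp(-mp^k(1-p)^{n-k})}$ with the layers of different sizes independent. Since total variation distance does not increase under a common deterministic map, the lemma is reduced to showing
$$
\dtv{(N_S)_S}{(\tilde N_S)_S}=o(1).
$$

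To bound this quantity I would condition on the total non-blank count $M^*=\sum_S N_S\sim\Bin(m,P^*)$, respectively $\tilde M^*=\sum_S\tilde N_S\sim\Po(mP^*)$. Given $M^*=s$, the vector $(N_S)_S$ is multinomial on $s$ trials with probabilities $P_{|S|}/P^*$; given $\tilde M^*=s$, an independent-Poisson vector conditioned on its sum has the same multinomial law. Since the two conditional distributions coincide, Fact~\ref{FaktDtvWarunkowe} yields
$$
\dtv{(N_S)_S}{(\tilde N_S)_S}\le 2\dtv{M^*}{\tilde M^*}\le 2P^*,
$$
the last inequality being Fact~\ref{FaktDtvPoisson}. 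Under $p=o(1/n)$ one has $np\to 0$, so
$$
P^*\le\sum_{k=2}^K\frac{(np)^k}{k!}=O\bigl((np)^2\bigr)=o(1),
$$
which closes the argument.

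No step is a genuine obstacle; the argument is a clean Poissonization combined with the de-Poissonization estimate of Fact~\ref{FaktDtvPoisson}. The main points requiring care are verifying that the two conditional multinomial laws actually coincide, and checking that the edge-set maps on the two sides really are the same deterministic function of the coupon-count vector, so that the vector-level TV bound transfers intact to the hypergraph-level statement.
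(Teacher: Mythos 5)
Your proposal is correct and follows essentially the same strategy as the paper: Poissonize the number of objects, observe that independent Poisson counts conditioned on their total give the same multinomial law as the binomial model conditioned on its total, invoke Fact~\ref{FaktDtvWarunkowe} and Fact~\ref{FaktDtvPoisson} to reduce everything to $\dtv{\Bin(m,P^*)}{\Po(mP^*)}\le P^*$, and finish with $P^*=O((np)^2)=o(1)$. The only cosmetic difference is that you work directly with the subset-indexed count vector $(N_S)_S$ and invoke the data-processing inequality to pass to hypergraphs, whereas the paper first aggregates to the $(K-1)$-dimensional vector of per-cardinality edge counts $X(M)$ (using exchangeability and Fact~\ref{FaktDtvWarunkowe}) and then applies the identical Poissonization argument; both routes reduce to the same scalar TV estimate.
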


Let, for all $2\le k\le K$,
$$
p_k=p^k(1-p)^{n-k},\quad  n_k=\binom{n}{k}, \quad P_k=\frac{p_k}{\sum_{k=2}^{K}p_kn_k} \quad\text{and}\quad P_k'=1-\exp(-mp_k) 
,$$
$M$ have the binomial distribution $\Bin(m,P)$ ($P=\sum_{k=2}^{K}p_kn_k$), $X(M)$ be defined as in \eqref{RownanieDefinicjaXM} and $Y$ be defined as in \eqref{RownanieDefinicjaY}. Then for all $2\le k\le K$
\begin{align*}
|E(\Hnm{k}{p})|=\kra{k}{}(M)\quad \text{and}\quad  |E(\Hn{k}{1-\exp(-mp_k)})|=\kraw{k}{}. 
\end{align*}
Moreover for any two 
hypergraphs $H$ and $H'$, such that for all $k\ge 2$ the number of edges of cardinality $k$ in $H$ and $H'$ is the same, we have
$$
\Pra{\bigcup_{k=2}^{K}\Hnm{k}{p}=H}=\Pra{\bigcup_{k=2}^{K}\Hnm{k}{p}=H'}
$$
and
$$
\Pra{\bigcup_{k=2}^{K}\Hn{k}{1-\exp(-mp_k)}=H}=\Pra{\bigcup_{k=2}^{K}\Hn{k}{1-\exp(-mp_k)}=H'}
.
$$ 
Therefore, by Fact~\ref{FaktDtvWarunkowe} a following lemma implies Lemma~\ref{LematDtv}. \begin{lem}\label{LemmaClaim1}
Let $K\ge 2$ be a constant integer.
Let $p=o(1/n)$, $M$ be a random variable with the binomial distribution $\Bin(m,\sum_{k=2}^K\binom{n}{k}p_k)$, $p_k=p^{k}(1-p)^{n-k}$,  $X(M)$ be defined as in \eqref{RownanieDefinicjaXM} for $n_k=\binom{n}{k}$ and $P_k=p_k/(\sum_{k=2}^K p_kn_k)$. Let moreover $Y$ be defined as in \eqref{RownanieDefinicjaY} for $P'_k=1-\exp(-mp_k)$. Then
$$
\dtv{X(M)}{Y}=o(1).
$$
\end{lem}
\noindent In fact, Lemma~\ref{LemmaClaim1} is a stronger and more general version of Claim~1 from~\cite{GpEquivalence}.
In the proof the main idea of the proof of Claim~1 from~\cite{GpEquivalence} is used. However, a modification of the choice of $M$ and $M'$ enables us to extend the result for $\alpha\le 4$.
\begin{proof}

We replace the binomial random variable $M$ with a Poisson random variable $M'$ with the same expected value $m\sum_{k=2}^K\binom{n}{k}p_k$. 
By Fact~\ref{FaktXMPoisson} the random variables $\kra{j}{i}(M')$ are independent and
$$
\Pr\{\kra{k}{i}(M')=1\}=1-\exp(-mp_k).
$$ 
Therefore in
$X(M')=(\kra{2}{}(M'),\ldots,\kra{K}{}(M'))$, the random variables $\kra{2}{}(M'),\ldots, \kra{K}{}(M')$ are independent with the binomial distribution $\Bin(n_2,1-\exp(-mp_2)),\ldots,\Bin(n_K,1-\exp(-mp_K))$, respectively.
By definition of $Y$, Facts~\ref{FaktDtvWarunkowe} and \ref{FaktDtvPoisson} we have
\begin{multline*}
\dtv{Y}{X(M)}=\dtv{X(M')}{X(M)}\le 2\dtv{M'}{M}\le\\
\le 2\sum_{k=2}^K\binom{n}{k}p_k=O\left(\sum_{k=2}^Kn^kp^k\right)=o(1).
\end{multline*}
\end{proof}

\subsection{Couplings of $G\Hn{k}{q}$ and a graph with independent edges}

In view of lemma shown in previous subsection there is a relation between 
$\bigcup_{k=2}^{K}\Gnm{k}{p}$ 
and
 $\bigcup_{k=2}^{K}G\Hn{k}{1-\exp(-mp^k(1-p)^{n-k})}$. The second importand part of the proof of the main theorems is to relate  $\bigcup_{k=2}^{K}G\Hn{k}{1-\exp(-mp^k(1-p)^{n-k})}$ to a graph with independent edges.

\noindent Let
\begin{equation}\label{RownanieStala}
a_n(q)=
\begin{cases}
6 &\text{ for }nq^2=O(n^{-1/2}) ;\\
3\ln n/(-\ln nq^2 + \ln \ln n)&\text{ for }nq^2=o(1)\text{ and }o(nq^2)=n^{-1/2};\\
3\ln n/\ln \ln n &\text{ for }nq^2=\Theta(1);\\
3\ln n/(\ln \ln n - 3\ln nq^2) &\text{ for }nq^2\to \infty \text{ and }nq^2=o(\sqrt[3]{\ln n});\\
\omega(n)n^3q^6,\text{ where }\omega(n)\to \infty&\text{ for }nq^2\to \infty \text{ and }nq^2=\Theta(\sqrt[3]{\ln n})\\
cn^3q^6,\text{ where }c>1&\text{ for }nq^2\to \infty \text{ and }o(nq^2)=\sqrt[3]{\ln n};\\
\end{cases}
\end{equation}
In this subsection three following lemmas are proved. 

\begin{lem}\label{LematCoupling3}
Let $c_3>2\cdot 3/\sqrt[3]{3!}$, $q=\Omega(n^{-1})$ and $q=o(n^{-3/7})$.
$$
G\Hn{3}{q^{3}}\coup{o(1)} \Gn{a_n(c_3q)c_3q},
$$ 
where $a_n(q)$ is defined as in \eqref{RownanieStala}.
\end{lem}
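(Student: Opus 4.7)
The plan is to carry out the coupling in three phases: a Chernoff-style bound on the maximum hyperedge-degree in $\Hn{3}{q^3}$, its transfer to a bound on the maximum degree of the graph $G\Hn{3}{q^3}$, and the coupling itself with $\Gn{p_+}$, where $p_+ = a_n(c_3 q) c_3 q$.

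For the first two phases, the number $D_v$ of hyperedges of $\Hn{3}{q^3}$ through a fixed vertex $v$ has distribution $\Bin\bigl(\binom{n-1}{2}, q^3\bigr)$ with mean $\lambda \sim n^2 q^3 / 2$. The sharp Chernoff bound from Section~3.4 with $t = 3$ produces a threshold $D^{*}$ with $\Pra{D_v \ge D^{*}} = o(n^{-3})$, so a union bound over the $n$ vertices gives $\max_v D_v \le D^{*}$ with probability $1 - o(1)$. The six cases in the formula~\eqref{RownanieStala} are precisely the piecewise form of $D^{*}$ as $\lambda$ transitions through $o(\ln n)$, $\Theta(\ln n)$, and $\omega(\ln n)$, with an additional split at $\lambda \asymp (\ln n)^{1/3}$ once $\lambda \to \infty$. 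Since each hyperedge through $v$ contributes at most two graph-neighbours of $v$ in $G\Hn{3}{q^3}$, on the good event the maximum degree of $G\Hn{3}{q^3}$ is at most $2D^{*}$, and the numerical condition $c_3 > 2\cdot 3/\sqrt[3]{3!}$ is tuned so that $2D^{*} \le (1-o(1))\, n p_+$---the factor~$2$ reflecting the two-neighbours-per-hyperedge loss, the factor~$3$ the clique size, and the $\sqrt[3]{3!}$ normalization coming from the Chernoff exponent for binomial tails in the sparse regime.

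For the coupling itself, I would work in the Poissonized model: by Facts~\ref{FaktDtvPoisson} and~\ref{FaktDtvCoupling}, at cost $o(1)$ in the coupling I may replace $\Hn{3}{q^3}$ by a hypergraph in which the number of hyperedges is Poisson, so that by Fact~\ref{FaktXMPoisson} the triples become independent via the coupon-collector scheme of Section~\ref{SubsectionCouponCollector}. Conditional on the good event of the first two phases, $G\Hn{3}{q^3}$ has only $O(nD^{*}) = o(n^2 p_+)$ edges, arranged as a union of $O(nD^{*}/3)$ triangles. Each such triangle contributes an edge incident to $v$ whose existence is dominated by the probability that a $\text{Bern}(p_+)$-edge from $v$ lands on the appropriate endpoint, so a vertex-by-vertex coupling using Fact~\ref{FaktCouplingSuma} together with the TV bound from Fact~\ref{FaktDtvCoupling} should assemble the global coupling.

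I expect the main obstacle to be this last step. The edges of $G\Hn{3}{q^3}$ are positively correlated through shared hyperedges while the edges of $\Gn{p_+}$ are independent, so a naive per-edge coupling is incompatible with both marginal distributions. Resolving this requires restricting to the good event of the Chernoff step, exploiting the slack provided by $c_3 > 2\cdot 3/\sqrt[3]{3!}$, and bounding the total variation distance carefully so that the vertex-local comparisons can be assembled into a coupling that preserves the independence of the edges of $\Gn{p_+}$ up to $o(1)$ error.
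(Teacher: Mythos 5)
Your proposal has a genuine gap: you never resolve the central difficulty of coupling a graph whose edges come in coupled triples with a graph whose edges are independent, and in fact your final paragraph concedes this. The degree-concentration step you propose is also aimed at the wrong quantity and would not produce the formula \eqref{RownanieStala}. You apply Chernoff to $D_v\sim\Bin(\binom{n-1}{2},q^3)$ with mean $\lambda\sim n^2q^3/2$, so your case split would occur around $\lambda\asymp\ln n$, i.e.\ $q\asymp(\ln n/n^2)^{1/3}$. But the cases in \eqref{RownanieStala} pivot on $nq^2$ passing through $o(1)$, $\Theta(1)$, and $\omega(1)$. The quantity the paper actually controls is a codegree: writing (in the $3$-partite reduction) $\X{i}^*(x)$ for the neighbourhood of the apex $x$ in part $\X{i}$ of the auxiliary graph $T(x)$, it bounds
$$
X_n(x_1,x_2)=\bigl|\{x:\; x_1\in\X{1}^*(x),\ x_2\in\X{2}^*(x)\}\bigr|\sim\Bin\bigl(n,(Cq)^2\bigr),
$$
whose mean $C^2nq^2$ is exactly what makes $nq^2$ the right parameter. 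A bound on $\max_v D_v$ does not control this codegree and does not by itself give any coupling with $\Gn{p_+}$, because a bounded-degree union of cliques still has a triangle density utterly different from $\Gn{p_+}$.

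The paper's proof resolves the dependence issue by a completely different decomposition that you do not have. After the reduction to the $3$-partite setting, the hypergraph is split as $\Hk{3}{q^3}=\bigcup_{x\in\X{3}}H(x)$ into independent hypergraphs by apex; each $GH(x)$ is coupled above (at cost $o(1/n)$) by a local graph $T(x)$ whose cross edges between $\X{1}^*$ and $\X{2}^*$ appear independently with probability $q$. This already encodes the triangle dependence locally, so it must be proved in several regimes (at most one edge when $q$ is very small; maximum-matching comparison via Lemmas~\ref{LematMatchingSize}--\ref{LematCouplingMatching} in the middle range; and a degree-sequence/urn-model comparison when $q$ is large). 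Then, because the $T(x)$ are independent, $\bigcup_x T^*(x)$ is dominated by $\Hk{2}{a_n(q)q}$ precisely when no pair $(x_1,x_2)$ is covered by more than $a_n(q)$ apexes, which is the Chernoff bound on $X_n(x_1,x_2)$ above. Your Poissonization and Fact~\ref{FaktCouplingSuma} steps are compatible in spirit with the paper's toolkit, but without the apex decomposition and the codegree control, the vertex-by-vertex assembly you gesture at has no mechanism for transferring the triangle correlations of $G\Hn{3}{q^3}$ into independent Bernoulli edges. The slack afforded by $c_3 > 2\cdot 3/\sqrt[3]{3!}$ does not substitute for this; in the paper the constant absorbs the passage from the partite model back to $\Gn{\cdot}$ (the factor $k(k-1)=6$ and the $k!=6$ normalization), not a dependency loss.
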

\begin{lem}\label{LematCoupling4}
Let $c_4>\sqrt[3]{15}\cdot 3\cdot 4/\sqrt[6]{4!}$, $q=\Omega(n^{-2/3})$ and $q=o(n^{-3/7})$.
$$
G\Hn{4}{q^{6}}\coup{o(1)} \Gn{a_n(c_4q)c_4q},
$$ 
where $a_n(q)$ is defined as in \eqref{RownanieStala}.
\end{lem}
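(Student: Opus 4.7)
The plan is to mirror the proof of Lemma~\ref{LematCoupling3}, replacing 3-hyperedges by 4-hyperedges. Setting $\lambda=\binom{n}{4}q^6$ and $M\sim\Po(\lambda)$, I would generate a hypergraph $H^*$ by making $M$ independent uniform draws (with replacement) of 4-subsets of $\V$, keeping the subsets that are drawn at least once. Fact~\ref{FaktDtvPoisson} gives $\dtv{\Bin(\binom{n}{4},q^6)}{\Po(\lambda)}=o(1)$, and Fact~\ref{FaktXMPoisson} ensures that in $H^*$ each 4-subset is present independently with a Bernoulli whose probability matches $q^6$ up to $o(1)$ total-variation error. Facts~\ref{FaktDtvCoupling} and~\ref{FactCouplingWarunkowe} then couple $\Hn{4}{q^6}$ and $H^*$ to agree with probability $1-o(1)$; hence it would suffice to couple $GH^*$ into $\Gn{a_n(c_4q)c_4q}$.

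\textbf{Phase 2 (Vertex-degree tail).} By Fact~\ref{FaktXMPoisson} the number $D_v$ of hyperedges of $H^*$ containing a given vertex $v$ is $\Po(\mu)$ with $\mu=\binom{n-1}{3}q^6\sim n^3q^6/6$. Writing $\mu=(nq^2)^3/6+o(1)$, the six regimes in the piecewise definition of $a_n$ in~\eqref{RownanieStala} are calibrated so that the Poisson Chernoff bound~\eqref{RownanieChernoffPoisson} (applied with tail parameter $t=2$) will give
$$\Pr\!\Bigl[3 D_v\ \ge\ (n-1)\,a_n(c_4 q)\,c_4 q\Bigr]\ =\ o(n^{-2}),$$
and a union bound over the $n$ vertices will then show {\whp} that every vertex of $GH^*$ has degree at most $(n-1)\,a_n(c_4 q)\,c_4 q$, since each hyperedge through $v$ contributes at most $3$ neighbours of $v$.

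\textbf{Phase 3 (Edge-set coupling).} Conditional on the event of Phase~2, the neighbourhood of each $v$ in $GH^*$ is a random subset of $\V\setminus\{v\}$ of size at most $(n-1)\,a_n(c_4q)\,c_4q$ whose conditional law is symmetric under relabelling of $\V\setminus\{v\}$. I would use Fact~\ref{FaktDtvWarunkowe} to compare this neighbourhood with the neighbourhood of $v$ in $\Gn{p^*}$ (which, conditional on its size, is also uniformly distributed), obtaining a per-vertex coupling that succeeds with probability $1-o(n^{-1})$. Stitching the vertex couplings via Fact~\ref{FaktCouplingSuma} and combining the three phases via Fact~\ref{FaktCouplingPrzechodniosc} would then yield the required $GH^*\coup{o(1)}\Gn{a_n(c_4q)c_4q}$.

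\textbf{Main obstacle.} The hard part will be Phase~2: the constant $c_4>\sqrt[3]{15}\cdot 3\cdot 4/\sqrt[6]{4!}$ must be chosen so that the Poisson tail bound for $D_v$ survives the union bound over the $n$ vertices simultaneously in all six regimes of~\eqref{RownanieStala}, while keeping $a_n(c_4q)c_4q$ of the tight order required downstream in Lemma~\ref{LematGlowny}. A secondary difficulty is that the $\binom{4}{2}=6$ edges produced by a single hyperedge are dependent, so a direct pair-by-pair Chernoff argument is not available; this is what forces the vertex-by-vertex coupling in Phase~3 and is the reason for the somewhat larger $c_4$ compared with the corresponding $c_3$ in Lemma~\ref{LematCoupling3}.
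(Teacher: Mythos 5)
Your Phases~1 and~2 are reasonable preliminary observations, but Phase~3 contains a genuine gap that the paper's argument is specifically designed to avoid. You propose to couple each vertex's neighbourhood in $GH^*$ with the corresponding neighbourhood in $\Gn{p^*}$ separately, and then to ``stitch the vertex couplings via Fact~\ref{FaktCouplingSuma}.'' But Fact~\ref{FaktCouplingSuma} requires the random variables on each side to be \emph{independent}, and the neighbourhoods of distinct vertices in $GH^*$ are badly dependent: a single $4$-hyperedge $\{u,v,w,z\}$ simultaneously forces $v,w,z$ into $u$'s neighbourhood, $u,w,z$ into $v$'s neighbourhood, and so on. Moreover, even if the per-vertex neighbourhood distributions were independent, a family of per-vertex neighbourhood couplings does not assemble into a coupling of graphs, because the indicator of the edge $(u,v)$ must be set \emph{consistently} in both $u$'s and $v$'s neighbourhoods, and your construction provides no mechanism for that consistency. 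So the step from $n$ per-vertex couplings to one graph coupling is exactly the hard part, and Fact~\ref{FaktCouplingSuma} does not bridge it.

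The paper circumvents this by first reducing to the $4$-partite setting (Lemma~\ref{TwierdzenieCoupling4}) and then decomposing $\Hk{4}{q^6}$ into the $n$ hypergraphs $H(x)$, $x\in\X{4}$, consisting of the hyperedges whose fourth coordinate is $x$. These pieces are \emph{edge-disjoint and mutually independent by construction}, which is what makes Fact~\ref{FaktCouplingSuma} legitimately applicable. Each $GH(x)$ is then coupled with an auxiliary $T(x)$ whose induced structure on $\X{1}\cup\X{2}\cup\X{3}$ is a $3$-partite hypergraph, so the argument reduces recursively to Lemma~\ref{TwierdzenieCoupling3}. This recursion is also where the factor $\sqrt[3]{15}$ in $c_4$ comes from; your direct degree-based approach does not produce a constant of the required form, which is a secondary sign that the mechanism is wrong. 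To repair your outline you would need to replace Phase~3 with a decomposition of $\Hn{4}{q^6}$ (or its $4$-partite analogue) into genuinely independent pieces, which is precisely the step the paper supplies.
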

\begin{lem}\label{LematCoupling5}
Let $c_5>\sqrt[6]{2^2\cdot 3\cdot 5^3}\cdot4\cdot 5/\sqrt[10]{5!}$, $q=\Omega(n^{-1/2})$ and $q=o(n^{-3/7})$.
$$
G\Hn{5}{q^{10}}\coup{o(1)} \Gn{a_n(c_5q)c_5q},
$$ 
where $a_n(q)$ is defined as in \eqref{RownanieStala}.
\end{lem}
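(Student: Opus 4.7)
\textbf{Proof plan for Lemma~\ref{LematCoupling5}.}
The strategy parallels the arguments for Lemmas~\ref{LematCoupling3} and~\ref{LematCoupling4}; the new feature is that each hyperedge of $\Hn{5}{q^{10}}$ contributes a $K_5$ (ten edges) to the source graph rather than a triangle or a $K_4$. The first step is to Poissonize: by Fact~\ref{FaktDtvPoisson} the total-variation distance between $\Hn{5}{q^{10}}$ and the Poisson model, in which the number of hyperedges is $\Po(\lambda)$ with $\lambda = -\binom{n}{5}\ln(1-q^{10})$ and the hyperedges are then placed uniformly, is $O(q^{10}) = o(1)$. Combined with Facts~\ref{FaktDtvCoupling} and~\ref{FaktCouplingPrzechodniosc}, it suffices to establish the desired coupling in this Poisson model.

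Next, I would control the local geometry of the source. For each vertex $v$ let $T_v$ count the hyperedges through $v$; in the Poisson model $T_v \sim \Po\bigl(\binom{n-1}{4} q^{10}\bigr)$. Because $q = o(n^{-3/7})$ we have $\E T_v = o(n^{-2/7})$, so each $T_v$ is a thin Poisson. Moreover, the expected number of pairs of hyperedges overlapping in at least two vertices is $O(n^{8}q^{20}) = O(n^{-4/7}) = o(1)$, so with probability $1-o(1)$ every two hyperedges share at most one vertex. On this ``good'' event the neighborhood of $v$ in $G\Hn{5}{q^{10}}$ is a disjoint union of $T_v$ uniformly random $4$-subsets of $V\setminus\{v\}$, hence has exactly $4T_v$ elements. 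A Chernoff-type bound (the Poisson version near~\eqref{RownanieChernoffPoisson}) together with a union bound over $v$ shows that $\max_v T_v$ is controlled by the quantity $a_n(c_5 q)$, with the six cases of~\eqref{RownanieStala} covering the six qualitative regimes of $n(c_5 q)^2$ arising from $q=\Omega(n^{-1/2})$ up to $q=o(n^{-3/7})$.

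The coupling itself is then constructed through the coupon-collector framework of Subsection~\ref{SubsectionCouponCollector}. On the good event, the distinct-neighbor set of each vertex $v$ is exactly the set of ``collected coupons'' in an independent coupon-collector process, whose input-size parameter is $T_v$. Fact~\ref{FaktXMPoisson} converts the Poisson hyperedge count into an independent Bernoulli indicator per potential neighbor, and Fact~\ref{FaktCouplingSuma} lets us aggregate these edge indicators into a Bernoulli product which can be coupled with the neighborhood distribution in $\Gn{a_n(c_5 q)\,c_5 q}$ one vertex at a time. The constant $c_5 > \sqrt[6]{2^2\cdot 3\cdot 5^3}\cdot 4\cdot 5/\sqrt[10]{5!}$ is calibrated precisely so that, at the worst-case value of $T_v$ permitted by $a_n(c_5 q)$, the per-edge probability in the target strictly dominates the per-edge probability induced by the four simultaneous neighbor-arrivals at $v$ per incident hyperedge.

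The main obstacle I expect is matching the \emph{batched} Bernoulli structure of the source---where each hyperedge through $v$ contributes four correlated edges at $v$ simultaneously---against the fully independent Bernoulli product in $\Gn{a_n(c_5 q)\,c_5 q}$. Verifying the precise inequality of parameters, normalized through the $\binom{5}{2}$-th root that produces the combinatorial factor $\sqrt[6]{2^2\cdot 3\cdot 5^3}$ in $c_5$, is the technical heart of the proof. Once this inequality is in place, Fact~\ref{FaktCouplingPrzechodniosc} assembles the per-vertex couplings and the total error remains $o(1)$ after absorbing the Poissonization and good-event errors from the earlier steps.
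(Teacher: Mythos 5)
There is a genuine gap, and it sits at the heart of your proposal. You plan to couple the source and target \emph{``one vertex at a time''} by matching the neighborhood distribution of each $v$ (a thin Poisson number $T_v$ of random $4$-subsets) against the Bernoulli neighborhood in $\Gn{a_n(c_5q)c_5q}$. But this cannot give a coupling of the two random graphs: the events $\{uv\in E\}$ and $\{vu\in E\}$ are the same event, so a per-vertex coupling of neighborhoods has no way to be consistent across the two endpoints of an edge. The paper avoids this precisely by first passing to the $k$-partite setting (the Fact at the start of Section~\ref{SectionCoupling}), where the hypergraph $\Hk{5}{q^{10}}$ decomposes as a union of \emph{edge-disjoint, independent} sub-hypergraphs $H(x)$ indexed by $x\in\X{5}$, and each $H(x)$ can be dominated separately by an auxiliary graph $T(x)$. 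The union $\bigcup_x T(x)$ is then a well-defined single random graph, which is what is being coupled with $\Gkn{5}{\cdot}$.

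The second missing idea is the recursion. The proof of Lemma~\ref{TwierdzenieCoupling5} (the $5$-partite version) does \emph{not} argue directly; it shows $\bigcup_x T^*(x)\coup{o(1)}\Hk{4}{(cq)^{6}}$ and then invokes Lemma~\ref{TwierdzenieCoupling4}, which in turn invokes Lemma~\ref{TwierdzenieCoupling3}. Your plan never reduces to the $k=4$ or $k=3$ cases and instead re-derives everything from scratch via Poissonization and a good event of pairwise hyperedge overlaps. That good-event computation ($n^8 q^{20}=o(1)$) is fine, but it is not enough: you still need to convert the batched neighbor-arrivals (each hyperedge through $x$ delivering a $4$-tuple across $\X{1}^*,\ldots,\X{4}^*$) into an honest $4$-partite random hypergraph, and that conversion is exactly what the $T(x)$-construction plus the concentration bound on $X_n(x_1,\ldots,x_4)=|\{x: x_i\in\X{i}^*(x)\}|$ achieves in the paper. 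Without this step the origin of the factor $\sqrt[6]{2^2\cdot3\cdot5^3}$ in $c_5$ (which equals $\sqrt[6]{20/3}\cdot\sqrt[3]{15}$, accumulated from the two levels of recursion) is unexplained, as is the factor $4\cdot5/\sqrt[10]{5!}=k(k-1)/\sqrt[\binom{k}{2}]{k!}$, which comes from the $k$-partite-to-ordinary conversion Fact, not from any root-normalization inside the coupling itself. To salvage your plan you would need to replace the per-vertex coupling by (a) the $5$-partite reduction, (b) the decomposition by $x\in\X{5}$, and (c) the recursive appeal to the $4$-partite lemma; at that point you would essentially be reproducing the paper's argument.
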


The following fact shows that the problem reduces to a $k$--partite case. 
First let us introduce additional notation.
Let $\X{1},\ldots,\X{k}$ be disjoint $n$-element sets and $r\in[0;1]$. We define $\Hk{k}{r}$  to be a hypergraph with vertex set $\bigcup_{i=1}^{k}\X{k}$ and edge set being the random subset of $\mathcal{E}:=\{(x_1,\ldots,x_k):\forall_{1\le i\le k}\ x_i\in\X{i}\}$ such that each element from $\mathcal{E}$ is added to  $E(\Hk{k}{r})$ independently with probability $r$. Let moreover $\Gkn{k}{r}$ be a random $k$--partite graph with $k$-partition $(\X{1},\ldots,\X{k})$ and each edge appearing with probability $r$. 

\begin{fact}
Let $a_n=\Omega(1)$. If 
\begin{equation}\label{RownanieCouplingKdzielne}
G\Hk{k}{r^{\binom{k}{2}}}\coup{o(1)} \Gkn{k}{a_nr},
\end{equation}
then,
$$
G\Hn{k}{1-(1-r^{\binom{k}{2}})^{k!}}\coup{o(1)} \Gn{1-(1-a_nr)^{k(k-1)}}
$$
and if $a_n r=o(1)$, then for any constant $c>k(k-1)/(k!)^{1/\binom{k}{2}}$
$$
G\Hn{k}{r^{\binom{k}{2}}}\coup{o(1)} \Gn{c\: a_nr}.
$$
\end{fact}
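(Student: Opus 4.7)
The plan is to reduce the non-partite statement to the $k$-partite hypothesis by ``folding'' the parts $\X{1},\ldots,\X{k}$ onto $\V$ through fixed bijections $\phi_i:\X{i}\to\V$, and using a single realization of the coupled pair $(H^*,G^*)$ from~\eqref{RownanieCouplingKdzielne} (for which $GH^*\subseteq G^*$ holds with probability $1-o(1)$) to drive both marginals on $\V$.

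First I would define $\hat H$ on $\V$ by declaring a $k$-subset $S$ to be a hyperedge iff \emph{some} ordering $(v_1,\dots,v_k)$ of $S$ lifts to an edge $(\phi_1^{-1}(v_1),\dots,\phi_k^{-1}(v_k))$ of $H^*$, and dually $\hat G$ on $\V$ by declaring the pair $\{u,v\}$ to be an edge iff one of the $k(k-1)$ lifts $\{\phi_i^{-1}(u),\phi_j^{-1}(v)\}$ with $i\neq j$ is an edge of $G^*$. Since the $k!$ orderings of a given $k$-subset correspond to $k!$ distinct tuples in $\X{1}\times\cdots\times\X{k}$, different $k$-subsets use disjoint tuples, and the $k(k-1)$ lifts of distinct vertex pairs are also disjoint, the marginals come out to $\hat H\sim\Hn{k}{1-(1-r^{\binom{k}{2}})^{k!}}$ and $\hat G\sim\Gn{1-(1-a_nr)^{k(k-1)}}$. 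The inclusion $G\hat H\subseteq\hat G$ on the good event is then automatic: any $\{u,v\}\in G\hat H$ comes from some $S\ni u,v$ in $\hat H$, hence from an ordering of $S$ in $H^*$ placing $u\in \X{i}$, $v\in \X{j}$ for some $i\neq j$; the edge $\{\phi_i^{-1}(u),\phi_j^{-1}(v)\}$ sits in the clique of $GH^*$ induced by this hyperedge, hence in $G^*$ by hypothesis, hence in $\hat G$ by construction. This establishes the first statement.

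For the second statement, I would combine the first with stochastic dominance on both sides. On the graph side, $a_nr=o(1)$ yields $1-(1-a_nr)^{k(k-1)}=k(k-1)a_nr(1+o(1))\le c\,a_nr$ for any fixed $c>k(k-1)$ and all large $n$, so $\Gn{1-(1-a_nr)^{k(k-1)}}\coupling{1}\Gn{c\,a_nr}$. On the hypergraph side, since $k!\,r^{\binom{k}{2}}-\bigl(1-(1-r^{\binom{k}{2}})^{k!}\bigr)=O(r^{2\binom{k}{2}})$, I would realise $\Hn{k}{k!\,r^{\binom{k}{2}}}$ as the independent union of the $\hat H$ above with an augmenting $\Hn{k}{\tilde q}$ of rate $\tilde q=O(r^{2\binom{k}{2}})$, and correspondingly split $\Gn{c\,a_nr}$ as the independent union of $\hat G$ and a ``slack'' $\Gn{\tilde p}$ with $\tilde p\sim(c-k(k-1))a_nr$, covering the graph edges contributed by the augmenting hyperedges inside the slack. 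Chaining the resulting couplings via Fact~\ref{FaktCouplingPrzechodniosc} then delivers the second statement.

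The step I expect to be most delicate is this last absorption: the augmenting hyperedges contribute up to $\binom{n}{k}\binom{k}{2}\tilde q$ candidate graph edges that must all land in the independent slack of density $\tilde p$, and the strict inequality $c>k(k-1)$ is precisely what makes the bookkeeping feasible. A quantitative marginal estimate (in the spirit of Fact~\ref{FaktDtvCoupling} combined with the Chernoff bound~\eqref{RownanieChernoff}) is needed to certify that the failure probability is $o(1)$ under the size constraints on $r$ implicit in the hypothesis~\eqref{RownanieCouplingKdzielne}.
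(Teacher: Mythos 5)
Your treatment of the first statement coincides with the paper's proof: the paper merges the $k$ copies $x^{(i)}_j$ into $v_j$ and discards the degenerate images (hyperedges with fewer than $k$ distinct merged vertices, edges with fewer than $2$), which is exactly your folding through the bijections $\phi_i$; the marginal computations ($k!$ disjoint orderings per $k$-set, $k(k-1)$ disjoint lifts per pair) and the observation that the folding map preserves the inclusion $GH^*\subseteq G^*$ are the same.

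The second statement is where I see a genuine gap. The crux of your Part~2 is the monotone coupling $G\Hn{k}{\tilde q}\coup{o(1)}\Gn{\tilde p}$ with $\tilde q=O(r^{2\binom{k}{2}})$ and $\tilde p\sim(c-k(k-1))a_nr$, and you have not shown it; moreover, the way you phrase the step (``the augmenting hyperedges contribute up to $\binom{n}{k}\binom{k}{2}\tilde q$ candidate graph edges that must all land in the independent slack of density $\tilde p$'') suggests a conditional check that cannot succeed. If you first realise the hyperedges of $\Hn{k}{\tilde q}$ and then ask whether an independently drawn $\Gn{\tilde p}$ happens to contain all of their induced graph edges, the conditional probability is at most $\tilde p^{\binom{k}{2}N}$ given $N$ augmenting hyperedges, which is $o(1)$ as soon as $N\ge 1$; no Chernoff bound drives this to $1-o(1)$. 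What is needed is a genuinely joint construction, and it must contend with the fact that the $\binom{k}{2}$ graph edges inside a single firing hyperedge occur together, so that $G\Hn{k}{\tilde q}$ has positively correlated edges, ruling out any edge-by-edge revealing argument based only on the marginal bound $\binom{n-2}{k-2}\tilde q\le\tilde p$. A cleaner route sidesteps the additive decomposition altogether: choose $\tilde r=r\bigl(1+O(r^{\binom{k}{2}})\bigr)$ so that $1-(1-\tilde r^{\binom{k}{2}})^{k!}=k!\,r^{\binom{k}{2}}$ exactly, apply the first statement at the rate $\tilde r$, and use $a_n\tilde r=o(1)$ to get $1-(1-a_n\tilde r)^{k(k-1)}\le k(k-1)a_n\tilde r\le c\,a_nr$ for large $n$. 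This uses hypothesis~\eqref{RownanieCouplingKdzielne} at the perturbed rate $\tilde r$ rather than at $r$, which is not literally what the Fact assumes but holds in every application (Lemmas~\ref{TwierdzenieCoupling3}--\ref{TwierdzenieCoupling5} cover a range of $q$); the paper itself leaves the second statement unargued, so the imprecision is inherited rather than introduced by you.
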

\begin{proof}
Let $\X{i}=\{x^{(i)}_1,\ldots,x^{(i)}_n\}$, for $1\le i\le k$, and $\V=\{v_1,\ldots,v_n\}$.
For a given instance of $\Hk{k}{r^{\binom{k}{2}}}$ (or $\Gkn{k}{a_nr}$) one may construct an instance of a hypergraph $\Hn{k}{1-(1-r^{\binom{k}{2}})}$ (or a graph $\Gn{1-(1-a_nr)^{k(k-1)}}$) with vertex set $\V$ by merging all vertices $x^{(i)}_j$, $1\le i\le k$, into $v_j$, for all $1\le j\le n$ , and deleting edges with less then $k$ (or $2$) vertices.
\end{proof}

Therefore three following lemmas imply Lemmas~\ref{LematCoupling3}, \ref{LematCoupling4} and \ref{LematCoupling5}.

\begin{lem}\label{TwierdzenieCoupling3}
Let $q=\Omega(n^{-1})$ and $q=o(n^{-1/3})$.
$$
G\Hk{3}{q^{3}}\coup{o(1)} \Gkn{3}{a_n(q)q},
$$ 
where $a_n(q)$ is defined as in \eqref{RownanieStala}.
\end{lem}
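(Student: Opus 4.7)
My plan is to follow the three-step template foreshadowed in the outline of Lemma~\ref{LematGlowny}: first Poissonize, then apply a Chernoff bound on vertex-degrees in the hypergraph to calibrate the threshold $a_n(q)$, and finally build the coupling.

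First, using Fact~\ref{FaktXMPoisson}, I generate $\Hk{3}{q^3}$ from the coupon-collector scheme of Section~\ref{SubsectionCouponCollector} with the number of draws $M\sim\Po(\lambda)$ for $\lambda=-n^3\ln(1-q^3)\sim n^3q^3$, so that each of the $n^3$ potential triples becomes a hyperedge independently with probability $q^3$. The payoff of this representation is that for every vertex $x\in\X{i}$ the number $D_i(x)$ of hyperedges through $x$ is an independent $\Po(\lambda_0)$ random variable with $\lambda_0:=-n^2\ln(1-q^3)\sim n^2q^3=nq\cdot nq^2$, and the $3n$ such degrees are mutually independent. I then apply the Poisson Chernoff bound from the lemma following~\eqref{RownanieChernoffPoisson} to each $D_i(x)$ and union-bound over all $3n$ vertices. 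The threshold $d(q)\asymp a_n(q)$ is determined by requiring $\Pra{\Po(\lambda_0)\ge d(q)}=o(1/n)$; the six sub-cases of~\eqref{RownanieStala} correspond exactly to the standard Poisson-tail regimes, indexed by whether $\lambda_0$ is vanishing, constant, or growing slowly or quickly relative to $\ln n$. This yields a good event $\mathcal{E}:=\{\max_{i,x}D_i(x)\le d(q)\}$ of probability $1-o(1)$, and caps the degree of any vertex of $G\Hk{3}{q^3}$ by $2d(q)$.

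On $\mathcal{E}$ I construct the coupling. The key structural observation is that, restricted to a single bipartite pair $\X{i}\times\X{j}$, the edges of $G\Hk{3}{q^3}$ are i.i.d.\ Bernoulli$\bigl(1-(1-q^3)^n\bigr)$: the indicator at $(x_i,x_j)$ depends only on the $n$ potential hyperedges $(x_i,x_j,\cdot)$, and these $n$-tuples are disjoint for distinct pairs in $\X{i}\times\X{j}$. Since $1-(1-q^3)^n\le nq^3\le a_n(q)\,q$ in every regime of~\eqref{RownanieStala} (as $a_n(q)\ge nq^2$ there by direct inspection), each bipartite part of $G\Hk{3}{q^3}$ is marginally dominated by $\Gkn{2}{a_n(q)q}$.

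The main obstacle, and the heart of the technical work, is to assemble the three bipartite couplings into a single coupling whose target is the \emph{independent}-edge graph $\Gkn{3}{a_n(q)q}$: the three bipartite parts of $G\Hk{3}{q^3}$ are positively correlated through hyperedges they share, while the three parts of $\Gkn{3}{a_n(q)q}$ are independent. My plan is to sprinkle: starting from $G\Hk{3}{q^3}$, independently add to each bipartite pair a Bernoulli$\bigl((a_n(q)q-P)/(1-P)\bigr)$ edge with $P=1-(1-q^3)^n$, producing an intermediate graph $\widetilde G\supseteq G\Hk{3}{q^3}$ with the correct marginal $a_n(q)q$ on each bipartite pair, but retaining cross-pair correlations inherited from the hypergraph. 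On $\mathcal{E}$, the degree cap $d(q)$ controls the total number of edge pairs that share a hyperedge by $O(nd(q))$, and a direct estimate using Facts~\ref{FaktDtvRoznica} and~\ref{FaktDtvPoisson} then bounds $d_{TV}(\widetilde G,\Gkn{3}{a_n(q)q})=o(1)$, exploiting precisely the calibration of $a_n(q)$ to absorb these correlations in each regime. Facts~\ref{FaktDtvCoupling} and~\ref{FaktCouplingPrzechodniosc} finally stitch the couplings $G\Hk{3}{q^3}\coupling{1}\widetilde G\coup{o(1)}\Gkn{3}{a_n(q)q}$ together to deliver the desired $G\Hk{3}{q^3}\coup{o(1)}\Gkn{3}{a_n(q)q}$.
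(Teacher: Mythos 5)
Your degree-concentration and Poissonization steps are fine, and the observation that each individual bipartite part of $G\Hk{3}{q^3}$ has i.i.d.\ edges is correct. The fatal gap is the final step: the claim $d_{TV}\bigl(\widetilde G,\Gkn{3}{a_n(q)q}\bigr)=o(1)$ is false, and no amount of calibration of $a_n(q)$ can fix it, because the two distributions are distinguishable by a triangle count. Concretely, for each of the $n^3$ potential hyperedges $(x_1,x_2,x_3)$, the three edges $(x_1,x_2),(x_1,x_3),(x_2,x_3)$ of $\widetilde G$ are simultaneously present whenever that hyperedge is drawn, so the expected number of triangles sitting on hyperedge slots in $\widetilde G$ exceeds that of $\Gkn{3}{a_n(q)q}$ by $\Theta(n^3q^3)$, while the standard deviation of the triangle count in the independent model is of smaller order as soon as $nq\gg a_n(q)$ (which holds throughout the range in question). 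The correlations introduced by the hyperedges are therefore visible at the level of the whole distribution, and $d_{TV}$ is bounded away from $0$; since Fact~\ref{FaktDtvCoupling} converts a TV bound into an \emph{equality} coupling, your stitched coupling cannot close. The $a_n(q)$ inflation is exactly enough to produce a \emph{domination} coupling, not closeness in total variation.

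The paper sidesteps this entirely by never trying to compare $\widetilde G$ to the independent graph in distribution. It decomposes $\Hk{3}{q^3}=\bigcup_{x\in\X{3}}H(x)$ by apex $x$, majorizes each $GH(x)$ by an auxiliary ``star-then-sprinkle'' graph $T(x)$ (a star at $x$ of radius $Cq$, followed by independent edges among the star's endpoints at rate $q$), and then shows that in the union $\bigcup_x T^*(x)$, each pair $(x_1,x_2)$ is covered by at most $a_n(q)$ stars with probability $1-o(1)$, because $X_n(x_1,x_2)\sim\Bin(n,(Cq)^2)$ is sharply concentrated. This gives a pointwise subgraph relation, which is what $\coup{o(1)}$ requires. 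Your concentration of vertex degrees $D_i(x)$ (an $n^2q^3$-scale quantity) is not the relevant statistic; the paper's concentration is on the pairwise count $X_n(x_1,x_2)$ (an $n(Cq)^2$-scale quantity), and it is the latter that matches the six regimes of \eqref{RownanieStala}. To repair the proof you would need to replace the sprinkle-and-TV step with an explicit monotone coupling along these lines.
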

The above lemma is a generalisation of Theorem~1.7 from \cite{Coupling}, where it was stated for $(\ln n/n^2)^{1/3}=o(q), q= o(n^{-3/5})$ and $a_n=17$. 
\begin{lem}\label{TwierdzenieCoupling4}
Let $q=\Omega(n^{-2/3})$, $c'_4>\sqrt[3]{15}$ and $q=o(n^{-2/5})$.
$$
G\Hk{4}{q^{6}}\coup{o(1)} \Gkn{4}{a_n(c'_4q)c'_4q},
$$ 
where $a_n(q)$ is defined as in \eqref{RownanieStala}.
\end{lem}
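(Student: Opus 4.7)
The plan is to adapt the coupling construction of Lemma \ref{TwierdzenieCoupling3} to the $4$-partite setting. The structural differences are that each hyperedge of $\Hk{4}{q^{6}}$ carries $\binom{4}{2}=6$ graph edges (rather than $3$), and each pair of vertices from distinct parts lies in $n^{2}$ potential hyperedges (rather than $n$).

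\textbf{Step 1 (Poissonization and Bernoulli decomposition of hyperedges).} By Facts~\ref{FaktXMPoisson} and~\ref{FaktDtvPoisson} we pass to a Poissonized version of $\Hk{4}{q^{6}}$ at total variation cost $O(q^{6})=o(1)$. For each potential hyperedge $h$ and each of its six internal pairs $e\in\binom{h}{2}$, introduce independent $B_{h,e}\sim\mathrm{Bernoulli}(c'_{4}q)$, together with an independent auxiliary $S_{h}\sim\mathrm{Bernoulli}((c'_{4})^{-6})$, and declare $h$ present iff $S_{h}=1$ and $B_{h,e}=1$ for every $e\in\binom{h}{2}$. The marginal presence probability is $(c'_{4}q)^{6}(c'_{4})^{-6}=q^{6}$, so the hypergraph has the correct distribution, while the $B_{h,e}$'s remain independent Bernoulli$(c'_{4}q)$ variables usable for building the target.

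\textbf{Step 2 (Chernoff control of per-pair hyperedge multiplicities).} For each pair $(u,v)\in\X{i}\times\X{j}$, $i\neq j$, let $N_{uv}$ count the present hyperedges through $(u,v)$; in the Poissonized model, $N_{uv}\sim\Po(n^{2}q^{6})$. Applying the Chernoff bound (Lemma~\ref{RownanieChernoffDokladny} and its Poisson analogue) in the regime dictated by $q$, the hypothesis $c'_{4}>\sqrt[3]{15}$ is precisely what is needed to guarantee
\[
\Pr\bigl\{N_{uv}>a_{n}(c'_{4}q)\bigr\}=o(n^{-3})
\]
uniformly in $(u,v)$. A union bound over the $O(n^{2})$ inter-part pairs yields $\Pr(\mathcal{E})=1-o(1)$ for
\[
\mathcal{E}:=\bigl\{N_{uv}\le a_{n}(c'_{4}q)\text{ for every pair }(u,v)\bigr\}.
\]

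\textbf{Step 3 (Constructing the target graph and the coupling).} Use the Bernoulli variables $B_{h,(u,v)}$ to build an auxiliary graph $G^{*}$: for each pair $(u,v)$, attach an independent uniform random ordering of its $n^{2}$ potential hyperedges (chosen independently of all $B_{h,e}$'s and $S_{h}$'s) and declare $(u,v)\in G^{*}$ iff $B_{h,(u,v)}=1$ for some $h$ among the first $a_{n}(c'_{4}q)$ positions of the ordering. Because the $B$-variables and the orderings at distinct pairs are disjoint and independent, $G^{*}$ has independent edges of marginal probability $1-(1-c'_{4}q)^{a_{n}(c'_{4}q)}\le a_{n}(c'_{4}q)\,c'_{4}q$, and a standard marginal padding produces $G^{*}\coupling{1}\Gkn{4}{a_{n}(c'_{4}q)\,c'_{4}q}$. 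On $\mathcal{E}$ the $\le a_{n}(c'_{4}q)$ present hyperedges through each pair $(u,v)$ are re-coupled into the top $a_{n}(c'_{4}q)$ positions of the ordering by a coupling that leaves the joint distribution of $G^{*}$ intact (exactly as in the $3$-partite argument of \cite{Coupling}, by exploiting the independence of the ordering from the Bernoulli decomposition); then any shadow edge $(u,v)$ of $\Hk{4}{q^{6}}$ is witnessed by some present $h$ with $B_{h,(u,v)}=1$ in the top budget, so $(u,v)\in G^{*}\subseteq\Gkn{4}{a_{n}(c'_{4}q)\,c'_{4}q}$.

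\textbf{Step 4 (Assembly).} The $o(1)$ Poissonization loss, the $o(1)$ probability of $\mathcal{E}^{c}$, and the almost-sure shadow containment on $\mathcal{E}$ combine to give the claimed $\coup{o(1)}$ relation.

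\textbf{Main obstacle.} The critical calibration is choosing $c'_{4}>\sqrt[3]{15}$ so that across every regime of \eqref{RownanieStala}, the Chernoff bound on $\Po(n^{2}q^{6})$ against the piecewise threshold $a_{n}(c'_{4}q)$ delivers $o(n^{-3})$ tails. The bottleneck occurs near the transition $nq^{2}=\Theta(\sqrt[3]{\ln n})$, where $a_{n}$ switches from $\Theta(\ln n/\ln\ln n)$ to polynomial growth, and the value $\sqrt[3]{15}$ is determined by matching Chernoff's exponent at precisely this transition. A secondary subtlety is the independence-preserving re-coupling of the orderings in Step~3; this requires the uniform ordering to be drawn independently of the Bernoulli decomposition, so that conditioning on $\mathcal{E}$ does not disturb the marginal of $G^{*}$.
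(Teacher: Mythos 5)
Your proposal takes a direct route: Poissonize, Bernoulli-decompose each hyperedge into per-pair coins, control the Poisson number of present hyperedges through each pair, and then use an independent random ordering to collapse the coins onto a budget of $a_n(c'_4q)$ attempts per pair. This is genuinely different from the paper's proof, which is \emph{inductive in the uniformity}: the $4$-partite hypergraph is sliced by the $\X{4}$-coordinate into $n$ independent pieces $H(x)$; each $GH(x)$ is dominated (as in Cases 2 and 3 of Lemma~\ref{TwierdzenieCoupling3}) by an auxiliary $T(x)$ built from Bernoulli$(Cq)$ stars $\X{i}^*(x)$ plus $3$-uniform edges inside $\X{1}^*(x)\times\X{2}^*(x)\times\X{3}^*(x)$ with probability $q^3$; the union $\bigcup_x T^*(x)$ is then shown, via a Chernoff bound on $X_n(x_1,x_2,x_3)=|\{x:\;(x_1,x_2,x_3)\in\X{1}^*(x)\times\X{2}^*(x)\times\X{3}^*(x)\}|\sim\Bin(n,(Cq)^3)$ with mean $\le n^{-1/5}$, to be dominated by $\Hk{3}{(c'_4q)^3}$ once $(c'_4)^3>15$; and finally Lemma~\ref{TwierdzenieCoupling3} is invoked to finish. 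The inductive route is what lets the paper reuse the $k=3$ machinery and is the actual source of the constant $\sqrt[3]{15}$.

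The central gap in your proposal is Step~3. If the ordering $\sigma_{uv}$ is drawn independently of the $B$'s, then $G^*$ has the right marginal, but a present hyperedge $h$ through $(u,v)$ has no reason to land in the top $a_n(c'_4q)$ positions of $\sigma_{uv}$, so the containment $G\Hk{4}{q^6}\subseteq G^*$ fails. If instead you force present hyperedges into the top positions, $\sigma_{uv}$ is no longer independent of the $B$'s, so the top block is biased toward $B_{h,(u,v)}=1$ and $G^*$ becomes stochastically \emph{larger} than $\Gkn{4}{a_n(c'_4q)c'_4q}$, breaking the claim $G^*\coupling{1}\Gkn{4}{a_n(c'_4q)c'_4q}$. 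You cannot simultaneously have the shadow containment and the independence that validates the marginal of $G^*$; the "re-coupling that leaves the joint distribution of $G^*$ intact" is asserted but not constructed, and it is exactly the place where the argument has to break. A secondary issue is Step~2: with $q=o(n^{-2/5})$ one has $\E N_{uv}=n^2q^6=o(n^{-2/5})$ while $a_n(c'_4q)$ is never below roughly $9$ on the range $q=\Omega(n^{-2/3})$, so $\Pr\{N_{uv}>a_n(c'_4q)\}=o(n^{-3})$ for \emph{any} constant $c'_4$ — and indeed only $o(n^{-2})$ is needed for a union bound over the $O(n^2)$ pairs. Hence the constant $\sqrt[3]{15}$ is not being pinned down by this bound; in the paper it is pinned down by a union bound over $n^3$ triples applied to $X_n(x_1,x_2,x_3)$, a computation that has no counterpart in your proposal.
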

\begin{lem}\label{TwierdzenieCoupling5}
Let $q=\Omega(n^{-1/2})$, $c'_5>\sqrt[6]{2^2\cdot 3\cdot 5^3}$ and $q=o(n^{-2/5})$.
$$
G\Hk{5}{q^{10}}\coup{o(1)} \Gkn{5}{a_n(c'_5q)c'_5q},
$$ 
where $a_n(q)$ is defined as in \eqref{RownanieStala}.
\end{lem}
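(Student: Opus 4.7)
The plan is to extend the coupling strategy used in Lemma~\ref{TwierdzenieCoupling3}, which in turn generalises Theorem~1.7 of~\cite{Coupling}, to the 5-partite setting.

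First, I would realize $\Hk{5}{q^{10}}$ through the ``label decomposition'' sketched right after the outline of the proof of Lemma~\ref{LematGlowny}: for each potential hyperedge $e=(x_1,\ldots,x_5)\in \X{1}\times\cdots\times\X{5}$ and each of the $\binom{5}{2}=10$ pairs $\tau=\{i,j\}\subset\{1,\ldots,5\}$ I toss an independent coin of probability $s:=1-(1-q^{10})^{1/10}\sim q^{10}/10$, and put $e$ into $\Hk{5}{q^{10}}$ precisely when at least one of its ten coins comes up heads. This gives the correct marginal for $\Hk{5}{q^{10}}$ and keeps all $10 n^5$ coins mutually independent, which is the key structural ingredient.

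Next, I would split the edges of $G\Hk{5}{q^{10}}$ into ``direct'' edges---produced by a coin of the matching type $\tau=\{i,j\}$ for a pair $(v,w)\in\X{i}\times\X{j}$---and ``indirect'' edges---produced by coins of some other type. Since distinct pairs $(v,w)$ use disjoint coins, the direct edges form an instance of $\Gkn{5}{1-(1-s)^{n^3}}$, whose edge probability is $\sim \tfrac{1}{10}n^3 q^{10}=o(c'_5 q)$ throughout $n^{-1/2}\le q = o(n^{-2/5})$, and hence is dominated by $\Gkn{5}{c'_5 q}$ up to an $o(1)$ coupling loss. For the indirect contribution at a fixed vertex $v\in\X{i}$, the number of edges it generates is at most $4D_v$, where $D_v\sim \Bin(n^4,q^{10})$ counts the hyperedges through $v$. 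Applying Lemma~\ref{RownanieChernoffDokladny} and the Chernoff lemma immediately following it to $D_v$, with target value $\lceil n\,a_n(c'_5 q)\,c'_5 q/4\rceil$, and taking the union bound over the $5n$ vertices, I would show that with probability $1-o(1)$ every $v$ satisfies $4 D_v\le n\,a_n(c'_5 q)\,c'_5 q$. The six regimes of $a_n$ in \eqref{RownanieStala} are calibrated exactly so that this tail bound remains $o(1/n)$ uniformly in $nq^2$.

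On the event that the degree bound holds, I would assemble the coupling vertex by vertex using the coupon-collector machinery of \S\ref{SubsectionCouponCollector}: set $M$ equal to the number of indirect incidences at $v$ and let the coupons index the candidate neighbours in $\bigcup_{j\ne i}\X{j}$. Fact~\ref{FaktXMPoisson} together with Fact~\ref{FactCouplingWarunkowe} then couple the indirect star at $v$ into an independent $\Bin(4n,a_n(c'_5 q)c'_5 q)$ neighbourhood; Fact~\ref{FaktCouplingSuma} joins these stars across all $5n$ vertices, and Fact~\ref{FaktCouplingPrzechodniosc} chains the direct and indirect couplings, producing the announced $G\Hk{5}{q^{10}}\coup{o(1)}\Gkn{5}{a_n(c'_5 q)c'_5 q}$.

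The main obstacle will be the precise value of the constant $c'_5$. The bound $c'_5>\sqrt[6]{2^2\cdot 3\cdot 5^3}$ is forced by the boundary regime $nq^2=\Theta(\sqrt[3]{\ln n})$, where the Chernoff exponent for $D_v$ is exactly on the scale of $\ln n$ and only a specific numerical constant closes the inequality, in parallel with how $\sqrt[3]{15}$ arises for $k=4$ in Lemma~\ref{TwierdzenieCoupling4}. Verifying that the \emph{same} $c'_5$ works across the entire range $n^{-1/2}\le q=o(n^{-2/5})$, rather than only at the tightest regime, will require the same case-by-case book-keeping as in the $k=3$ and $k=4$ proofs.
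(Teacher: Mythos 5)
Your plan takes a genuinely different route from the paper's, and it has a gap at the assembly step.

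The paper's proof of Lemma~\ref{TwierdzenieCoupling5} is recursive: it peels off one part, writing $\Hk{5}{q^{10}}=\bigcup_{x\in\X{5}}H(x)$ where the $H(x)$ are \emph{independent and edge-disjoint} because they collect the hyperedges through distinct $x\in\X{5}$. For each $x$ it builds an auxiliary $T(x)$ with an inner ``neighbourhood'' $\X{1}^*(x),\ldots,\X{4}^*(x)$ selected at rate $Cq$ and a $4$-partite hypergraph $T^*(x)$ on those neighbourhoods at rate $q^{\binom{4}{2}}$, proves $GH(x)\coup{o(1/n)}GT(x)$, and then bounds the ``multiplicity'' $X_n(x_1,\ldots,x_4)=|\{x: x_j\in\X{j}^*(x)\,\forall j\}|\sim\Bin(n,(Cq)^4)$ by $c_5''>20/3$ via Chernoff to get $\bigcup_x T^*(x)\coup{o(1)}\Hk{4}{(cq)^{6}}$ for $c>\sqrt[6]{20/3}$. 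Lemma~\ref{TwierdzenieCoupling4} is then invoked on this $4$-uniform residual, producing $c'_5>\sqrt[6]{20/3}\cdot c'_4>\sqrt[6]{20/3}\cdot\sqrt[3]{15}=\sqrt[6]{2^2\cdot3\cdot5^3}$. So the constant is forced by the recursion, not by the $nq^2=\Theta(\sqrt[3]{\ln n})$ boundary regime as you suggest.

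The concrete gap in your sketch is the step ``Fact~\ref{FaktCouplingSuma} joins these stars across all $5n$ vertices.'' Fact~\ref{FaktCouplingSuma} requires independence, but the indirect stars at distinct vertices are not independent and not edge-disjoint: an edge $(v,w)$ with $v\in\X{i}$, $w\in\X{j}$ appears in both $v$'s and $w$'s star, and the indicator that it is ``indirect'' is the same event for both. A per-vertex coupling of the star at $v$ to a fresh $\Bin(4n,\cdot)$ star therefore cannot simply be glued by Fact~\ref{FaktCouplingSuma}; you would need an extra argument to control the overlaps, and this is exactly the book-keeping that the paper's $x\in\X{5}$ peeling avoids (stars at different $x\in\X{5}$ really are disjoint). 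A secondary issue: even granting the degree bound $4D_v\le n\,a_n c'_5 q$, bounding degrees does not in itself give a subgraph-coupling to $\Gkn{5}{a_n c'_5 q}$; the paper instead controls, for each fixed $(k-1)$-tuple, how many of the independent $T^*(x)$ can put an edge there (the quantity $X_n$), which is a pairwise rather than vertexwise statistic and feeds directly into the recursive application of Lemma~\ref{TwierdzenieCoupling4}. The ``label decomposition'' you use is what the paper uses only to show that the factor $a_n=\Omega(nq^2)$ is unavoidable, not as the coupling mechanism for the upper bound.

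If you want to pursue your route, you should at minimum (a) replace the vertexwise degree bound by a pairwise multiplicity bound and (b) replace the appeal to Fact~\ref{FaktCouplingSuma} by an argument that handles the shared-edge dependencies, e.g.\ by first conditioning on a single part as the paper does.
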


\noindent For clarity of considerations long proofs of Lemmas~\ref{TwierdzenieCoupling3}, \ref{TwierdzenieCoupling4} and \ref{TwierdzenieCoupling5}
are left to Appendix.

\subsection{Main coupling lemma}

\begin{lem}\label{LematGlowny}
Let $a_n(q)$ be defined as in \eqref{RownanieStala}.
Moreover let $c_3>2\cdot 3/\sqrt[3]{3!}$, $c_4>\sqrt[3]{15}\cdot3\cdot 4/\sqrt[6]{4!}$, $c_5>\sqrt[6]{2^2\cdot 3\cdot 5^3}\cdot 4\cdot 5/\sqrt[10]{5!}$, $q_k=(1-\exp(-mp^k(1-p)^{n-k}))^{1/\binom{k}{2}}$, for $k=2,3,4,5$ and 
\begin{align}
\label{RownaniePminus}
\hat{p}_-=&\ \ \  q_2\\
\label{RownaniePplus}
\hat{p}_+=&
\begin{cases}
q_2+a_n(c_3 q_3)c_3 q_3,
&\text{for }
p=\Omega(n^{-1}m^{-1/3})\text{ and }
\\
&\quad \ \: p=o(min\{n^{-1}m^{-1/4},n^{-3/7}m^{-1/3}\});\\
q_2+\sum_{k=3}^4a_n(c_k q_k)c_k q_k,
&\text{for }
p=\Omega(n^{-1}m^{-1/4})\text{ and }
\\
&\quad \ \: p=o(min\{n^{-1}m^{-1/5},n^{-3/7}m^{-1/3},\\
&\quad \ \: \ \ \ \ \ \ \ n^{-9/14}m^{-1/4}\});\\
q_2+\sum_{k=3}^5 a_n(c_k q_k)c_k q_k,
&\text{for }
p=\Omega(n^{-1}m^{-1/5})\text{ and }
\\
&\quad \ \: p=o(min\{n^{-1}m^{-1/6},n^{-3/7}m^{-1/3},\\
&\quad \ \: \ \ \ \ \ \ \ n^{-9/14}m^{-1/4},n^{-6/7}m^{-1/5}\}).\\
\end{cases}
\end{align}
Then
\begin{equation*}\label{RownanieLematGlowny}
\Gn{\hat{p}_-}\coup{o(1)}\Gnm{}{p}\quad\text{and}\quad\Gnm{}{p}\coup{o(1)}\Gn{\hat{p}_+}.
\end{equation*}

\end{lem}
\begin{proof}
In the statement of the lemma we have 3 different values of $\hat{p}_+$. They correspond to three cases:  $\Hnm{4}{p}$ is empty with high probability,
$\Hnm{5}{p}$ is empty with high probability,
$\Hnm{6}{p}$ is empty with high probability. 
We prove Lemma~\ref{LematGlowny} in all three cases at the same time. The proof differs only by the value of $K$, which is $3,4$ and $5$ in the first, second and third case, respectively.
  
Let $m=n^{\alpha}$ and $q_k=(1-\exp(-mp^{k}(1-p)^{n-k}))^{1/\binom{k}{2}}$.
We prove that under assumptions of Lemma~\ref{LematGlowny} there exists a sequence of couplings

\begin{align}
\Gn{q_2}&\coup{o(1)}&&\label{coupling1} \\ 
\Gnm{2}{p}&
\coupling{1}
\ \Gnm{}{p}&&
\coup{o(1)}
\bigcup_{k=2}^{K}\Gnm{k}{p} \label{coupling23}
\\ \label{coupling4}
&&&\coup{o(1)}
\bigcup_{k=2}^{K}G\Hn{k}{q_k^{\binom{k}{2}}}
\\ \label{coupling5}
&&&\coup{o(1)}
\Gn{q_2}\cup\left(\bigcup_{k=2}^{K}\Gn{a_n(c_kq_k)c_kq_k}\right) 
\\ \label{coupling6}
&&&\coupling{1}
\Gn{q_2+\sum_{k=3}^{K}a_n(c_kq_k)c_kq_k}. 
\end{align}
Here  $$G\Hn{2}{q_2^{\binom{2}{2}}},\ldots,G\Hn{K}{q_K^{\binom{K}{2}}}$$ are independent random hypergraphs.

Couplings \eqref{coupling1} and \eqref{coupling4} follow by Lemma~\ref{LematDtv} and Fact~\ref{FaktDtvCoupling}.
The left--hand side of \eqref{coupling23} is trivial. A coupling existing by the right--hand side of \eqref{coupling23} follows by the fact that under the assumptions of Lemma~\ref{LematGlowny}
\begin{align*}
\Pr\{\exists_{w\in \W}\: |V(v)| > K\}
=O\left(mn^{K+1}p^{K+1}\right)=o(1)
.
\end{align*}
Moreover
\eqref{coupling5} is a consequence of Lemma~\ref{LematCoupling3}, \ref{LematCoupling4} and \ref{LematCoupling5}  after substituting $q=q_k$ for $k=3,\ldots,K$.
Finally coupling from \eqref{coupling6} is standard. Therefore the lemma follows by Fact~\ref{FaktCouplingPrzechodniosc}.
\end{proof}

\section{Proof of the theorems}\label{SectionOutline}

The proof of Theorem~\ref{alpha6} uses similar techniques to those of the proof presented in~\cite{GpEquivalence}.

\begin{proof}[Proof of Theorem~\ref{alpha6}]
For $p=o(1/n\sqrt[3]{m})$ by Fact~\ref{FaktDtvRoznica} and Lemma~\ref{LematDtv} with $K=2$ we have  
\begin{align*}
&\dtv{\Gnm{}{p}}{\Gn{\hat{p}}}\le\\
&\dtv{\Gnm{}{p}}{\Gnm{2}{p}}+\dtv{\Gnm{2}{p}}{\Gn{\hat{p}}}\le\\
&\le \Pra{\Gnm{}{p}\neq \Gnm{2}{p}} + \dtv{\Gnm{2}{p}}{\Gn{\hat{p}}}\le\\
&\le \Pra{\exists_{w\in \W}|V(w)|>2} + \dtv{\Gnm{2}{p}}{\Gn{\hat{p}}}\le\\
&\le m\binom{n}{3}p^3+\dtv{\Gnm{2}{p}}{\Gn{\hat{p}}}=o(1).
\end{align*}
\end{proof}

\noindent The proofs of Theorems \ref{Twierdzenie3}, \ref{Twierdzenie4} and \ref{Twierdzenie313} base on the following fact.

\begin{fact}\label{FaktGplusminus}
Let $G_-$, $G$ and $G_+$ be random graphs such that
\begin{equation}\label{RownanieGplusminus}
G_-\coup{o(1)} G\quad\text{and}\quad G\coup{o(1)} G_+.
\end{equation}
If for $a\in[0;1]$ and a monotone property $\mathcal{A}$
\begin{equation}\label{RownaniePrawdGplusminus}
\Pra{G_-\in \mathcal{A}}\to a\quad\text{and}\quad  \Pra{G_+\in \mathcal{A}}\to a.
\end{equation}
then  
$$
\Pra{G\in \mathcal{A}}\to a.
$$
\end{fact}
\begin{proof} By \eqref{RownanieGplusminus} there exists a probability space on which we may define random vectors  $(G_-,G)$ and $(G,G_+)$ such that
$$
\Pra{\mathcal{E}_-}=1-o(1)\quad\text{and}\quad\Pra{\mathcal{E}_+}=1-o(1)),
$$
for events
$$
\mathcal{E}_-:=\{G_-\subseteq G\}
\quad\text{and}\quad
\mathcal{E}_+:=\{G \subseteq G_+\}. 
$$
If \eqref{RownaniePrawdGplusminus} then on the probability space
\begin{align*}
\Pr\{ G\in \mathcal{A}\}
&\le\Pr\{ G\in \mathcal{A}|\mathcal{E}_+\}\Pr\{\mathcal{E}_+\}+\Pr\{\mathcal{E}_+^c\}\le
\\
&\le\Pr\{ G_+\in \mathcal{A}|\mathcal{E}_+\}\Pr\{\mathcal{E}_+\}+\Pr\{\mathcal{E}_+^c\}\le
\\
&\le\Pr\{ \{G_+\in \mathcal{A}\}\cap\mathcal{E}_+\}+\Pr\{\mathcal{E}_+^c\}\le
\\
&\le\Pr\{ G_+\in \mathcal{A}\}+\Pr\{\mathcal{E}_+^c\}=\\
&=\Pr\{ G_+\in \mathcal{A}\}+o(1)=a+o(1)
\intertext{and}
\Pr\{ G\in \mathcal{A}\}
&\ge\Pr\{ G\in \mathcal{A}|\mathcal{E}_-\}\Pr\{\mathcal{E}_-\}\ge
\\
&\ge\Pr\{ G_-\in \mathcal{A}|\mathcal{E}_-\}\Pr\{\mathcal{E}_-\}=
\\
&=\Pr\{ \{G_-\in \mathcal{A}\}\cap\mathcal{E}_-\}=
\\
&\ge\Pr\{ G_-\in \mathcal{A}\}+\Pr\{\mathcal{E}_-\}-\Pra{\{G_-\in \mathcal{A}\}\cup\mathcal{E}_-}\ge\\
&\ge \Pr\{ G_-\in \mathcal{A}\}+\Pr\{\mathcal{E}_-\}-1=\\
&=\Pr\{ G_-\in \mathcal{A}\}+o(1)=a+o(1).
\end{align*}
Analogous equalities may be formulated for a decreasing property.
\end{proof}

\begin{proof}[Proof of Theorem~\ref{Twierdzenie3}]
\ \\
(i) 
By Lemma~\ref{LematGlowny} and Fact~\ref{FaktGplusminus} in order to prove Theorem~\ref{Twierdzenie3}(i) it remains to show that 
$$
\hat{p}_+\le (1+\eps'(n))q_2 \quad\text{for some function }\eps'(n)\to 0,
$$
where $\hat{p}_+$ and $q_2$ are defined as in the statement of Lemma~\ref{LematGlowny}. 
For completeness it should be pointed out that under assumptions of Theorem~\ref{Twierdzenie3}(i) $p$ fulfils all the conditions from \eqref{RownaniePplus}. 

By \eqref{RownaniePplus} we are reduced to proving that for $k=3,4,5$
\begin{equation}\label{RownanieDowodTw}
\frac{a_n(c_kq_k)c_kq_k}{q_2}=o(1)\quad\text{ for }p=\Omega(n^{-1}m^{-1/k}) 
\end{equation}
Notice that
$$
\frac{a_n(c_kq_k)c_kq_k}{q_2}=
\begin{cases}
O\left(q_k\right)&\text{ for }nq_k=o(n^{-1/2});\\
O\left(q_k\ln n \right)&\text{ for }nq_k=o(\ln^{-1/3} n);\\
O\left(\omega(n)n^3q_k^7\right)&\text{ for }nq_k=\Omega(\ln^{-1/3} n)\\
&\text{ and } \omega(n) \text{ tending slowly to $0$}, 
\end{cases}
$$
$$
q_2\sim mp^2\text{ or } q_2=\Theta(1),\quad 
q_3\sim m^{1/3}p,\quad 
q_4\sim m^{1/6}p^{2/3}
\quad \text{and}\quad 
q_5\sim m^{1/10}p^{1/2}. 
$$
Moreover
$$
nq_3^2=\Omega(n^{-1/2}) \Leftrightarrow p=\Omega(n^{-3/4}m^{-1/3})
$$
and in the considered case
$$
p=\Omega(n^{-1}m^{-1/k})\quad\text{and}\quad p=O\left(\ln^{1/2}m^{-1/2}\right).
$$
If we substitute above values to
$$
\frac{a_n(c_kq_k)c_kq_n}{q_2}
$$
after a simple calculation we arrive at \eqref{RownanieDowodTw}.
\bigskip

\noindent(ii) Let 
$
p=\sqrt{-\ln (1-\hat{p})/((1-\eps')m)}
$
and $\eps'=\eps'(n)$ be such that $(1-p)^{n-2}\ge 1-\eps'$ and $\eps'=o(1)$. Since under assumptions of (ii) $\ln (1-\hat{p})=O(1)$, such $\eps'$ exists. Then by a simple calculation we have $\hat{p}\le q_2$, where $q_2$ is defined as in Lemma~\ref{LematGlowny}. Thus by Lemma~\ref{LematGlowny} and a standard coupling of $\Gn{\cdot}$ 
\begin{equation}\label{RownanieDowod(ii)1}
\Gn{\hat{p}}\coupling{1}\Gn{q_2}\coup{o(1)}\Gnm{}{\sqrt{-\frac{\ln (1-\hat{p})}{(1-\eps')m}}}.
\end{equation}

Let now 
$
p=\sqrt{-\ln (1-(\hat{p}/(1+\eps'')))/m)}, 
$
then $q_2\le \hat{p}/(1+\eps'')$, where $q_2$ is defined as in Lemma~\ref{LematGlowny}.
Under assumptions of (ii) $p$ fulfils \eqref{RownanieGlowne}, therefore by the proof of  (i) $\hat{p}_+=q_2(1+o(1))$, where $\hat{p}_+$ is defined as in Lemma~\ref{LematGlowny}. A carefull insight into the proof of (i) lead us to the conclusion that $\eps''=\eps''(n)$ may be chosen such that $\hat{p}_+\le (1+\eps'')q_2$ and $\eps''=o(1)$. Then $\hat{p}_+\le \hat{p}$ and by Lemma~\ref{LematGlowny}
\begin{equation}\label{RownanieDowod(ii)2}
\Gnm{}{\sqrt{-\frac{\ln (1-\frac{\hat{p}}{1+\eps''})}{m}}}
\coup{o(1)}
\Gn{\hat{p}_+}\coupling{1}\Gn{\hat{p}}.
\end{equation}
Therefore  \eqref{RownanieDowod(ii)1} and \eqref{RownanieDowod(ii)2} combined with Fact~\ref{FaktGplusminus} imply the thesis of (ii).
\end{proof}

\begin{proof}[Proof of Theorems~\ref{Twierdzenie4} and~\ref{Twierdzenie313} ]
The proofs of Theorems~\ref{Twierdzenie313} and \ref{Twierdzenie4} are basically the same as this of Theorem~\ref{Twierdzenie3}(i). First notice that $q_k\sim \sqrt[\binom{k}{2}]{mp^k}$ for $k=3,4$. Moreover, if we substitute $p=O(\ln^{1/2} n/ m^{1/2})$ then $a_n(c_3q_3)c_3 < 30$ for $\alpha>4$ and $a_n(c_3q_3)c_3 < 90$, $a_n(c_4q_4)c_4 < 471$ for $\alpha>10/3$. Therefore Lemma~\ref{LematGlowny} and Fact~\ref{FaktGplusminus} imply the thesis.  
\end{proof}

Notice that although the expected number of hyperedges in $G\Hn{k}{q^{\binom{k}{2}}}$ and cliques in $\Gn{q}$ is the same, the function $a_n$ is necessary. 
There exists a coupling of two random graph models, the existence of which contradicts the thesis that for some constant $C$ and for all $q$ 
$$
G\Hn{3}{q^{3}}\coup{o(1)} \Gn{Cq}.
$$
Let $q=o(1)$. For any $e$, a $3$--element subset of $\V$, define $F_e$ to be the set of bijections assigning to the numbers from the set  $\{1,2,3\}$ the vertices of $e$  ($|F_e|=6$). Now, to each $e$, a $3$-element subset of $\V$, and each function $f\in F_e$ we assign $f$ to $e$ independently of all other functions and sets with probability 
$$r=1-(1-q^{3})^{1/6}\sim \frac{q^{3}}{6}.$$
Notice that if we add each edge $e$ to the set of edges of the hypergraph with vertex set $\V$ in the case when at least one function from $F_e$ is assigned to $e$, we get a random variable with the same distribution as $\Hn{3}{q^{3}}$. Moreover we may construct a random subgraph $G_3$ of $G\Hn{3}{q^{3}}$ by adding an edge $(v_1,v_2)$, $v_1,v_2\in \V$, if and only if at least one $3$-element subset of $\V$ containing $v_1$ and $v_2$ is assigned a function in which $v_1$ and $v_2$ are assigned $1$ and $2$ or $2$ and $1$. Notice that, from independent choice of the functions from $F_e$ we get that each edge appears in $G_3$ independently with probability 
$$r'=1-(1-r)^{2(n-2)}\sim 2nr\sim \frac{1}{3}nq^{3}.$$
Therefore
$$
\Gn{r'}\coupling{1}G\Hn{3}{q^{3}}
$$  
and in the lemmas there should be $a_n=\Omega(nq^{2})$.

\section*{Appendix}

We prove Lemma~\ref{TwierdzenieCoupling3} in detail. The proof of Lemmas~\ref{TwierdzenieCoupling4} and~\ref{TwierdzenieCoupling5} are analogous, therefore we only sketch them.

\begin{proof}[Proof of Lemma~\ref{TwierdzenieCoupling3}]
For $x\in\X{3}$, let $H(x)$ be subhypergraph of $\Hk{3}{q^{3}}$ induced on $\{x\}\cup\X{1}\cup\X{2}$ (i.e. a hypergraph with vertex set $\{x\}\cup\X{1}\cup\X{2}$ and edge set consisting of those edges from $E(\Hk{3}{q^{3}})$, which contain $x$). Moreover let us denote by $H^*(x)$ a subgraph of $GH(x)$ induced on $\X{1}\cup\X{2}$. By above definitions 
\begin{equation}\label{RownanieH3sumaHx}
\Hk{3}{q^{3}}=\bigcup_{x\in\X{3}}H(x),
\end{equation}
and edges in $H(x)$ and $H^*(x)$ are independent (i.e. $H^*(x)$ and $\Hk{2}{q^{3}}$ are the same models).

Moreover we define $T(x)$, $x\in\X{3}$, to be a graph with vertex set $\{x\}\cup\X{1}\cup\X{2}$ and edge set constructed by the following procedure. First we add each edge $(x,y)$, $y\in\X{1}\cup \X{2}$ independently  with probability $C q$ to the edge set, where 
\begin{equation}\label{RownanieDefinicjaC}
C=C(q)=
\begin{cases}
c,\text{ where } c>5,&\text{ for } nq^2=o(1);\\
\omega(n),\text{ where } \omega(n)\to \infty,&\text{ for } nq^2=\Theta(1);\\
cnq^{2},\text{ where } c>1,&\text{ for } nq^2\to \infty.\\
\end{cases}
\end{equation}
(We assume, that $\omega(n)$ tends slowly to infinity and $c$ is close to $5$ and $1$, respectively.)\\
Then independently with probability $q$ we add to the edge set each edge $(x_1,x_{2})\in\X{1}^*\times\X{2}^*$, where, for each $1\le i\le 2$, $\X{i}^*$ is the set of vertices form $\X{i}$ connected by an edge with $x$. Let $T^*(x)$ be a subgraph of $T(x)$ induced on $\X{1}\cup\X{2}$. By definition the following statements are equivalent:
\begin{equation}\label{RownanieCouplingGHGT}
\Hk{2}{q^3}
\coup{o(1/n)} T^*(x)
\end{equation}
\begin{equation}\label{RownanieGHxTx}
G H(x)\coup{o(1/n)} T(x). 
\end{equation}
Moreover
\begin{equation}\label{RownanieCouplingT*Hk-1}
 \bigcup_{x\in\X{3}}T^*(x)\coup{o(1)}\Hk{2}{a_n(q)q},
\end{equation}
where $\Hk{2}{a_n(q)q}$ is independent of the choice of $\X{i}^*$,
implies
\begin{equation*}\label{RownanieCouplingSumaGTGq}
\bigcup_{x\in\X{3}}T(x)\coup{o(1)}\Gkn{3}{a_n(q)q}. 
\end{equation*}
Therefore by \eqref{RownanieH3sumaHx} we have that \eqref{RownanieCouplingGHGT}
 and \eqref{RownanieCouplingT*Hk-1} imply the thesis. 

First we concentrate on showing \eqref{RownanieCouplingGHGT}. The proof varies for $q$ in different ranges, therefore it is divided into 4 cases:\\
CASE 1: $q=O(\ln n/n)$,\\
CASE 2: $\ln n/n=o(q)$ and $q=O(n^{-2/3}\ln^{1/3} n)$,\\
CASE 3: $n^{-2/3}\ln^{1/3} n=o(q)$ and $q=o(n^{-1/2})$,\\
CASE 4: $q=\Omega(n^{-1/2})$ and $q=o(n^{-1/3})$.

\czesc{CASE 1}
For $q=O(\ln n/n)$ with probability $1+o(1/n)$ a graph $\Hk{2}{q^3}$ consists of at most one edge. 
Namely probability that $\Hk{2}{q^3}$ has more than one edge is at most 
$$
\binom{n^2}{2}q^6=O\left(n^4q^6\right)=o\left(\frac{1}{n}\right)
$$ 
Moreover, for large $n$,
\begin{align*}
\Pr\left\{\exists_{x_1\in \X{1},x_2\in \X{2}}(x_1,x_2)\in E(T^*(x))\right\}
&\ge \sum_{x_1\in \X{1},x_2\in \X{2}}\Pr\left\{(x_1,x_2)\in E(T^*(x))\right\}\\
&-\sum_{x_1,x_1'\in \X{1},x_2,x_2'\in \X{2}}\Pr\left\{(x_1,x_2),(x_1',x_2')\in E(T^*(x))\right\}\\
&=n^2(Cq)^2q-\binom{n}{2}^2(Cq)^4q^2-2n\binom{n}{2}(Cq)^3q^2=\\
&=C^2n^2q^3(1-O(n^2q^3+nq^2))\ge n^2q^3 \ge\\
&\ge \Pr \left\{\exists_{x_1\in \X{1},x_2\in \X{2}}(x_1,x_2)\in E(\Hk{2}{q^3})\right\}.
\end{align*}
This gives an obvious coupling 
$$
\Hk{2}{q^3}\coup{o(1/n)}T^*(x).
$$

\czesc{CASES 2, 3 and 4}

If $\ln n/n = o(q)$ then the number of vertices in $\X{i}^*$ is sharply concentrated around its expected value. Let $\Hnq{2}{q}$ be a graph constructed by a similar procedure as $T^*(x)$ but with $\X{i}^*$ replaced by $\X{i}'$ chosen uniformly at random from all subsets of cardinality sufficiently smaller than $\E |\X{i}^*|$. Namely in $\Hnq{2}{r}$ first 
$\X{i}'$ is chosen uniformly at random from all $C'nq$ element subsets of $\X{i}$, where
$$
C'=\begin{cases}
   5,&\text{ for }nq^{2}=o(1);\\ 
   \omega'(n),&\text{ for }nq^{2}=\Theta(1);\\ 
   c' nq^{2}, \text{ where }1<c<c'&\text{ for }nq^{2}\to \infty,
   \end{cases}
$$
 and then each edge $(x_1,x_2)\in \X{1}'\times \X{2}'$ is added to the edge set of $\Hnq{2}{r}$ independently with probability $r$, $r\in [0;1]$. By Chernoff's bound \eqref{RownanieChernoff}
$$
\Hnq{2}{q}\coup{o(1/n)} T^*(x).
$$
Therefore
$$
\Hk{2}{q^{3}}\coup{o(1/n)}\Hnq{2}{q^{3}}
$$
implies \eqref{RownanieCouplingGHGT}.

\czesc{CASE 2}
If $q=O(n^{-2/3}\ln^{1/3} n)$ then $\Hk{2}{q^{3}}$ {\whp} does not contain many edges except a maximum matching. Therefore a coupling is constructed by comparison of the sizes of maximum matchings in $\Hk{2}{q^{3}}$ and $\Hnq{2}{q}$.

\begin{lem}\label{LematMatchingSize}
Let $r=o(1/(C'nq))$, $\ln n=o(nq)$ and $N_2(r)$ be a random variable denoting the size of a maximum matching in $\Hnq{2}{r}$, then 
\begin{equation}\label{RownanieNt}
N'_2(r)\coup{o(1/n)} N_2(r),
\end{equation}
where $N'_2(r)$ has the binomial distribution $\Bin(C'nq,s_2(r))$ and
\begin{align*}
s_2(r)=
1-\exp(-(C'nq-\sqrt{3C'nq\ln n})(1-(1-r)^{C'nq})/C'nq)\sim C'nq r.
\end{align*}
\end{lem}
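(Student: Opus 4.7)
The plan is to construct the coupling via a greedy one-pass matching of $\Hnq{2}{r}$. Order the vertices of $\X{1}'$ as $x_1^{(1)},\ldots,x_1^{(C'nq)}$, reveal the random neighbourhoods $N(x_1^{(i)})\subseteq\X{2}'$ one at a time (each $x_2$ independently included with probability $r$), and maintain a set $U_i\subseteq\X{2}'$ of still-free vertices after step $i$, starting with $U_0=\X{2}'$: at step $i$ match $x_1^{(i)}$ to an arbitrary element of $N(x_1^{(i)})\cap U_{i-1}$ if that set is nonempty and remove it to form $U_i$, otherwise set $U_i=U_{i-1}$. Writing $I_i$ for the indicator of success at step $i$ and $M=\sum_i I_i$, we have $M\le N_2(r)$ deterministically, so it suffices to couple $N'_2(r)$ below $M$ with probability $1-o(1/n)$.

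Fix an auxiliary integer $t_n$ with $t_n\to\infty$ and $t_n=o(C'nq)$ and set $s_2(r)=1-(1-r)^{C'nq-t_n}$. The assumption $r=o(1/(C'nq))$ gives $s_2(r)\sim(C'nq-t_n)r\sim C'nq\cdot r$, as required. Conditional on the history at step $i$, $I_i$ is Bernoulli with parameter $q_i=1-(1-r)^{|U_{i-1}|}$, which on the event $\mathcal{F}=\{M\le t_n\}$ is at least $s_2(r)$. A standard quantile coupling yields, on a common probability space, iid uniform $[0,1]$ variables $V_1,\ldots,V_{C'nq}$ (each independent of the history up to step $i$) with $I_i=\mathbf{1}[V_i\le q_i]$; setting $B'_i=\mathbf{1}[V_i\le s_2(r)]$ produces iid Bernoulli$(s_2(r))$ variables, so $N'_2(r):=\sum B'_i\sim\Bin(C'nq,s_2(r))$, and on $\mathcal{F}$ the inequality $s_2(r)\le q_i$ forces $B'_i\le I_i$ for every $i$, hence $N'_2(r)\le M\le N_2(r)$.

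It remains to prove $\Pra{\mathcal{F}^c}=o(1/n)$. Since $M$ is dominated by the number of $x_1\in\X{1}'$ with $N(x_1)\neq\emptyset$, a $\Bin(C'nq,1-(1-r)^{C'nq})$ variable with mean $\Theta((C'nq)^2 r)=o(C'nq)$, the upper tail in Lemma~\ref{RownanieChernoffDokladny} gives $\Pra{M>t_n}\le\exp(-\Omega(t_n))$ whenever $t_n$ exceeds this mean by a factor tending to infinity. The hypothesis $\ln n=o(nq)$ (hence $\ln n=o(C'nq)$) makes it possible to choose $t_n$, for instance $t_n=\ln^2 n$, simultaneously $o(C'nq)$ and with $\exp(-\Omega(t_n))=o(1/n)$.

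The main obstacle I anticipate is precisely this joint calibration of $t_n$: it must be small enough to preserve $s_2(r)\sim C'nq\cdot r$, yet large enough for the Chernoff tail on $M$ to beat $1/n$. The hypothesis $\ln n=o(nq)$ provides exactly the quantitative slack that reconciles the two requirements, and is what restricts the regime of the lemma.
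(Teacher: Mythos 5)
Your approach is correct but genuinely different from the paper's. The paper bounds the matching number from below by a coupon-collector construction: for every non-isolated vertex of $\X{1}'$ it selects one uniformly random incident edge, and the number of \emph{distinct} $\X{2}'$-endpoints hit is both a lower bound on the matching number and exactly the number of distinct coupons collected after a $\Bin(C'nq,1-(1-r)^{C'nq})$-distributed number of draws; the paper then Poissonizes (replacing the draw count by a slightly smaller Poisson random variable, which by Fact~\ref{FaktXMPoisson} makes the coupon indicators independent) to obtain a stochastically dominating $\Bin(C'nq,s_2(r))$ from below. You instead run a greedy one-pass matching and do a sequential quantile coupling: the conditional success probability $q_i$ at step $i$ stays above a fixed level $s_2(r)=1-(1-r)^{C'nq-t_n}$ as long as at most $t_n$ vertices have been matched, and on that event the iid Bernoulli$(s_2(r))$ indicators sit pointwise below the greedy ones. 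Both arguments yield $s_2(r)\sim C'nqr$ (the two $s_2(r)$'s differ as exact formulas, which is fine since the lemma specifies only the asymptotics). Your route is more elementary and avoids the coupon-collector/Poissonization machinery, at the cost of needing a sequential domination argument.

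The one flaw is the calibration of $t_n$. The choice $t_n=\ln^2n$ need not satisfy $t_n=o(C'nq)$: the hypothesis is $\ln n = o(nq)$, not $\ln^2 n = o(nq)$ (take $nq=\ln^{3/2}n$ as a counterexample). Moreover you also need $t_n$ to exceed the mean $\lambda:=\Theta\bigl((C'nq)^2 r\bigr)$ of the dominating binomial by a growing factor for the Chernoff tail to kick in, and $\ln^2n$ need not dominate $\lambda$ either. The fix is straightforward: both $\lambda$ and $\ln n$ are $o(C'nq)$ (the first from $r=o(1/(C'nq))$, the second by hypothesis), so one can take, for instance, $t_n=\bigl(\max\{\lambda,\ln n\}\cdot C'nq\bigr)^{1/2}$. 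This is simultaneously $\omega(\lambda)$, $\omega(\ln n)$, and $o(C'nq)$, so the Chernoff bound gives $\exp(-\Omega(t_n))=o(1/n)$ and $s_2(r)=1-(1-r)^{C'nq-t_n}\sim C'nqr$ as required.
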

\begin{proof}[Proof of Lemma~\ref{LematMatchingSize}]
Let $H$ be a hypergraph chosen according to the probability distribution of $\Hnq{2}{r}$. Define $H'$ to be a hypergraph with vertex set 
$\X{1}$ 
and edge set 
$\{
(x_1)
 : 
 x_1\in\X{1}
 \text{ and }
 \exists_{x_{2}\in\X{2}}
(x_1,x_2)
\in 
E(H)
\}.$
Notice that $H'$ is chosen according to the probability distribution  of $\Hnq{1}{1-(1-r)^{C'nq}}$ (in analogy to $\Hnq{2}{\cdot}$, $\Hnq{1}{1-(1-r)^{C'nq}}$ is a hypergraph with vertex set $\X{1}$ and edge set constructed by first choosing $\X{1}'$ uniformly at random from all $C'nq$--element subsets of $\X{1}$ and then adding to an edge set each $x_1\in\X{1}'$ independently with probability $1-(1-r)^{C'nq}$).
Let $H''$ be a subhypergraph of $H$ such that for each edge $(x_1)\in E(H')$ we pick uniformly at random an edge from $E(H)$ containing $x_1$ and add it to the edge set of $H''$. Notice that a maximum matching in $H$ is at least of the size of the set of non isolated vertices in $\X{2}$ in $H''$. Moreover the edge set of $H''$ may be alternatively constructed in the following way (i.e. this construction leads to the same probability distribution). First we pick an integer according to the binomial distribution $\Bin(C'nq,1-(1-r)^{C'nq})$, then, given the value of the picked integer, we pick a subset $\X{1}''$ uniformly at random from all subsets of $\X{1}$ of this cardinality. Independently we choose $\X{2}'$ uniformly at random from all $C'nq$--element subsets of $\X{2}$. Then to each vertex $x_1\in\X{1}'$, to create an edge, we add one vertex, chosen uniformly at random from the set $\X{2}'$. For all $x_1\in\X{1}'$ the choices of the second vertex are independent with repetition. Therefore by the above construction, \eqref{RownanieChernoffPoisson} and Fact~\ref{FactCouplingWarunkowe}
\begin{equation*}
X(M)\coup{o(1/n)} X(C'nq) \coupling{1} N_{2},
\end{equation*}
where $X(M)$ and $X(C'nq)$ are defined as in \eqref{RownanieDefinicjaXM} for $K=2$, $n_2=C'nq$, $P_2=(1-(1-r)^{C'nq})/(C'nq)$
and $M$ with the Poisson distribution $\Po(C'nq-\sqrt{3C'nq\ln n})$. Thus by Fact~\ref{FaktXMPoisson}
$X(M)$ has the binomial distribution $\Bin(C'nq, s_{2}(r))$. 
\end{proof}

The above lemma is used to show existence of a coupling between a random variable $M_2$ denoting the size of an edge set in $\Hk{2}{q^{3}}$ and $N_2$.  

\begin{lem}\label{LematCouplingMatching}
Let $C' = 5$, $M_{2}$ has the binomial distribution $\Bin(n^{2},q^3)$ and let $N_{2}$ be the size of a maximum matching in $\Hnq{2}{q}$. Then
 $$
 M_{2}\coup{o(1/n)}N_{2}.
 $$  
\end{lem}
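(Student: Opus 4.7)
The plan is to reduce the comparison of $M_2$ with the matching size $N_2$ to the comparison of two binomial random variables, and then argue by concentration. First, since we are in CASE~2 ($\ln n \ll nq$ and $q = o(n^{-1/2})$), the hypothesis $q = o(n^{-1/2})$ is equivalent to $q = o(1/(C'nq))$, so Lemma~\ref{LematMatchingSize} applies with $r=q$ and delivers
$$
N'_2(q) \coup{o(1/n)} N_2,\qquad N'_2(q) \sim \Bin\bigl(C'nq,\, s_2(q)\bigr),\qquad s_2(q)\sim C'nq\cdot q.
$$
In particular $\E N'_2(q) \sim (C')^2 n^2 q^3 = 25\,\E M_2$. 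By the transitivity of coupling (Fact~\ref{FaktCouplingPrzechodniosc}), it then suffices to construct a coupling
$$
M_2 \coup{o(1/n)} N'_2(q)
$$
of two binomial random variables whose means differ by the fixed factor $25$.

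To obtain this second coupling, I would pick an intermediate threshold $L$ with $\E M_2 < L < \E N'_2(q)$ and prove separately
$$
\Pr\{M_2 \geq L\} = o(1/n) \quad\text{and}\quad \Pr\{N'_2(q) < L\} = o(1/n).
$$
Once both tail bounds hold, the coupling is automatic: on the high-probability event where $M_2 < L$ and $N'_2(q) \geq L$ we have $M_2 \leq N'_2(q)$, and the exceptional events have total probability $o(1/n)$. When $\E M_2 = n^2 q^3 \gg \ln n$ the threshold choice $L = 5\,n^2 q^3$ together with the Chernoff bounds in Lemma~\ref{RownanieChernoffDokladny} immediately gives both tails as $\exp(-\Omega(n^2 q^3)) = o(1/n)$.

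The main obstacle is the sub-regime of CASE~2 in which $\E M_2 = n^2 q^3$ is of order $\ln n$ or smaller. There the standard Chernoff bound on $M_2$ only yields $\exp(-\Omega(n^2 q^3))$, which need not be $o(1/n)$, and Markov's inequality alone gives $\Pr\{M_2 \geq 1\} = O(n^2 q^3)$, which is too weak unless $n^2 q^3 = o(1/n)$. In this sub-regime I would instead invoke the sharp first-order Chernoff bound $\Pr\{X \geq a\} \leq \exp(-\lambda - a\ln(a/\lambda) + a)$ from the first lemma of Section~3.4, taking $a$ of order $\ln n/\ln\ln n$ (matching the scale of $a_n(q)$ in~\eqref{RownanieStala}); this gives the desired $o(1/n)$ upper tail for $M_2$ and forces the choice $L \asymp \ln n/\ln\ln n$. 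Finally, the lower tail on $N'_2(q)$ for this same $L$ follows by reading off the appropriate case of~\eqref{RownanieStala} — either from~\eqref{RownanieChernoff} when $\E N'_2(q) \gg \ln n$, or from the sharper first-order bound otherwise — using only that $\E N'_2(q)$ is $25$ times larger than $\E M_2$ so that $L$ is well below $\E N'_2(q)$ throughout the relevant range.
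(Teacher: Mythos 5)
Your plan goes wrong at the very core: you want to produce the coupling $M_2 \coup{o(1/n)} N'_2(q)$ by finding a threshold $L$ with $\Pr\{M_2 \ge L\}=o(1/n)$ and $\Pr\{N'_2(q) < L\}=o(1/n)$. A threshold argument of this kind can only succeed if the two distributions are \emph{separated}, i.e.\ if almost all of the mass of $N'_2(q)$ sits above almost all of the mass of $M_2$. But CASE~2 lies precisely in the regime where this separation is impossible. The defining constraints of CASE~2 are $\ln n/n \ll q = O\bigl((n^{-2}\ln n)^{1/3}\bigr)$, which force $\E M_2=n^2q^3=O(\ln n)$ while allowing $n^2q^3$ to be bounded, or even to tend to $0$. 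In that sub-range $\E N'_2(q)\sim 25\,n^2q^3 = O(\ln n)$ too, and in fact $\Pr\{N'_2(q)=0\}\approx e^{-25\,n^2q^3}$, which is \emph{not} $o(1/n)$ unless $n^2q^3\gg\ln n$. Since $n^2q^3\gg\ln n$ never happens in CASE~2, there is no admissible $L\ge 1$ for which the lower-tail bound $\Pr\{N'_2(q)<L\}=o(1/n)$ can hold; your claim that ``$L$ is well below $\E N'_2(q)$ throughout the relevant range'' is simply false when, say, $n^2q^3=\Theta(1)$, where $L\asymp \ln n/\ln\ln n \gg 25 \approx \E N'_2(q)$.

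The fix requires a genuinely different coupling: one that pairs \emph{small} values of $M_2$ with \emph{small} values of $N'_2(q)$ rather than separating them by a threshold. The paper does this by writing
$M_2=\sum_{i=1}^{nq}\xi_i$ with $\xi_i\sim\Bin(n/q,q^3)$ and $N'_2(q)=\sum_{i=1}^{nq}\zeta_i$ with $\zeta_i\sim\Bin(C',s_2(q))$, then proving a \emph{per-summand} coupling $\xi_i\coup{o(1/(n^2q))}\zeta_i$ by comparing probability masses directly: $\Pr\{\xi_i=l\}\le\Pr\{\zeta_i=l\}$ for $1\le l\le 4$, together with $\Pr\{\xi_i>4\}=o(1/(n^2q))$. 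This pointwise domination lets $\xi_i=0$ be paired with any value of $\zeta_i$, and $\xi_i=l$ with $\zeta_i=l$, which is exactly what is needed when both variables put most of their mass at $0$. Fact~\ref{FaktCouplingSuma} then aggregates the $nq$ per-summand couplings with total failure probability $o(1/n)$. Your reduction to $N'_2(q)$ via Lemma~\ref{LematMatchingSize} and Fact~\ref{FaktCouplingPrzechodniosc} is correct and matches the paper, but the second half needs the decomposition-plus-domination argument, not a two-sided tail bound.
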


\begin{proof}
By previous lemma and Fact~\ref{FaktCouplingPrzechodniosc} it is sufficient to show
\begin{equation}\label{RownanieCouplingMNprim}
 M_{2}\coup{o(1/n)}N'_{2},
\end{equation}
where  $N'_{2}$ has the binomial distribution $\Bin(C'nq, s_{2}(q))$ and $s_{2}(q)\sim C'nq^{2}$.\\
Notice that
\begin{align*}
M_{2}&
=\sum_{i=1}^{nq}\xi_i, 
\text{ where $\xi_i$ are independent with distribution }
\Bin\left(\frac{n}{q},q^3\right);\\
N_{2}'&
=\sum_{i=1}^{nq}\zeta_i, 
\text{ where $\zeta_i$ are independent with distribution }
\Bin\left(C',s_{2}(q)\right).
\end{align*}
Since
$s_{2}(q)\sim C'nq^{2}$,
for large $n$ we have
\begin{align*}
 \forall_{1\le l\le 4}\Pr\{\xi_i=l\}
&\le\frac{1}{l!}(nq^{2})^l\le\frac{(C')_l}{l!}s_{2}^{l}(1-s_{2})^{C'-l}
=\Pr\{\zeta_i=l\}\\
\intertext{and}\\
\Pr\{\xi_i>4\}&\le \binom{\frac{n}{q}}{5}q^{5\cdot 3}\le \left(nq^{2}\right)^5
=
\frac{1}{n^2q}\left(nq^{\frac{3}{2}}\right)^{7}q^{\frac{1}{2}}=o\left(\frac{1}{n^2q}\right)
\end{align*}
Therefore, for all $1\le i\le nq$ it is simple to construct a probability measure on $\mathbb{N}\times\mathbb{N}$, the existence of which implies
$$
\xi_i\coup{o(1/n^2p)}\zeta_i.
$$
This by Fact~\ref{FaktCouplingSuma}
implies \eqref{RownanieCouplingMNprim}.
\end{proof}

\noindent Let $\mathcal{G}$ be a set of $2$--partite graphs with $2$--partition $(\X{1},\X{2})$. We define\\ 
\begin{tabular}{llp{14cm}}
$\M(l)$&-&the subset of $\mathcal{G}$ containing all graphs with a maximum matching of cardinality~$l$;\\
$\M_1(l)$&-&the subset of $\M(l)$ containing all graphs with the maximum degree $1$;\\
$\M_2(l)$&-&the subset of $\M(l)$ containing all graphs with the maximum degree $2$ and exactly one vertex of degree $2$\\
\end{tabular}\\
and 
\begin{equation*}
\M_1=\bigcup_{l=0}^n\M_1(l),\quad \M_2=\bigcup_{l=0}^n\M_2(l).
\end{equation*}
For $q=o(n^{-2/3})$
\begin{align}\label{RownanieTylkoMatching}
\Pra{\Hk{2}{q^3}\notin \M_1}
\le
2n\binom{n}{2}q^{6}=O\left(n^{4}q^{6}n^{-1}\right)=o\left(\frac{1}{n}\right) 
\end{align}
and for $q=O\left((n^{-2}\ln n)^{1/3}\right)$
\begin{equation}\label{RownanieTylkoMatching2}
\begin{split}
\Pr&\{\Hk{2}{q^3}\notin \M_1\cup\M_2\}\le\\
&\le
2\binom{n}{2}\left(\binom{n}{2}q^{6}\right)^2
+n^2\left(\binom{n-1}{2}q^{6}\right)^2
+n^2q^3\left(\left(n-1\right)q^3\right)^2
+2n\binom{n}{3}q^{9}=\\
&=O
\left(
n^{6}q^{12}
+n^{4}q^{9}
\right)=
O\left(
\left(n^{2}q^3\right)^4n^{-2}+
\left(n^{2}q^3\right)^3n^{-2}\right)
=
o\left(\frac{1}{n}\right)
\end{split}
\end{equation}
Now let
$$
\mu:\mathbb{N}\times\mathbb{N}\to [0,1]
$$
be a probability measure associated with a coupling of $M_{2}$ and $N_{2}$ existing by Lemma~\ref{LematCouplingMatching}. Starting with the probability measure $\mu$ we construct a coupling, which implies for large $n$
$$
\Hk{2}{q^3}
\coup{o(1/n)}\Hnq{2}{q}.
$$

Let $H^{(2)}(\M_1)$ be a random graph constructed by first sampling $H$ according to the probability distribution of $\Hk{2}{q^3}$ and replacing it by a graph chosen uniformly at random from $\M_1(|E(H)|)$ in the case where $H\notin\M_1$. Moreover let $H^{(2)}(\M_1\cup\M_2)$ be a random graph constructed by sampling $H$ according to the probability distribution of $\Hk{2}{q^3}$  and replacing it by a graph chosen uniformly at random from $\M_1(|E(H)|)$ in the case where $H\notin \M_1\cup\M_2$. Sizes of edge sets of $H^{(2)}(\M_1)$ and $H^{(2)}(\M_1\cup\M_2)$ are random variables  $M_{2}(\M_1)$ and $M_{2}(\M_1\cup\M_2)$, respectively. Obviously $M_{2}(\M_1)$ and $M_{2}(\M_1\cup\M_2)$ have the same distribution as $M_{2}$. 
For any event $\mathcal{A}$, denote by $H^{(2)}(\M_1)^{[\mathcal{A}]}$, $H^{(2)}(\M_1\cup\M_2)^{[\mathcal{A}]}$ and $\Hnq{2}{q}^{[\mathcal{A}]}$ graphs $H^{(2)}(\M_1)$, $H^{(2)}(\M_1\cup\M_2)$ and $\Hnq{2}{q}$ under condition~$\mathcal{A}$.

Let $q=o(n^{-2/3})$.
By \eqref{RownanieTylkoMatching}
$$
\Hk{2}{q^3}\coup{o(1/n)}H^{(2)}(\M_1),
$$ 
Therefore it remains to show
$$
H^{(2)}(\M_1)\coup{o(1/n)}\Hnq{2}{q}.
$$ 
Let $(l_1,l_2)\in \mathbb{N}\times\mathbb{N}$ be chosen according to the probability measure $\mu$. If $l_1>l_2$, then we sample $H^{(2)}(\M_1)^{[M_{2}=l_1]}$ and $\Hnq{2}{q}^{[N_{2}=l_2]}$ independently. And if $l_1\le l_2$, then first we sample an instance of $\Hnq{2}{q}^{[N_{2}=l_2]}$ and then choose its subgraph uniformly at random from all its subgraphs contained in $\M_1(l_2)$. Then, from the chosen subgraph, we delete $l_2-l_1$ edges chosen uniformly at random. Thereby we get the edge set of $H^{(2)}(\M_1)^{[M_{2}=l_1]}$. 

Let now $q = \Omega(n^{-2/3})$ and $q=O\left(n^{-2/3}\ln^{1/3} n\right)$. Let also
\begin{align*}
P_1(l)&=\Pr\{H^{(2)}(\M_1\cup\M_2)^{[M_2=l]}\in\M_1\};
&
P_2(l)&=\Pr\{\Hnq{2}{q}^{[N_{2}=l]}\in\M_1\};\\ 
Q_1(l)&=\Pr\{H^{(2)}(\M_1\cup\M_2)^{[M_2=l]}\notin\M_1\};
&
Q_2(l)&=\Pr\{\Hnq{2}{q}^{[N_{2}=l]}\notin\M_1\}.\\ 
\end{align*}
By \eqref{RownanieTylkoMatching2} we are left with showing that
$$
H^{(2)}(\M_1\cup\M_2)\coup{o(1/n)}\Hnq{2}{q}.
$$ 
Let $(l_1,l_2)\in \mathbb{N}\times\mathbb{N}$ be chosen according to the probability measure $\mu$.  If $l_1>l_2$ or $l_2\ge \omega(n)\ln n$ (where $\omega(n)$ is a sequence tending slowly to infinity), then we construct a pair of graphs from $\mathcal{G}$ by sampling independently $H^{(2)}(\M_1\cup \M_2)^{[M_2=l_1]}$ and $\Hnq{2}{q}^{[N_{2}=l_2]}$. If $1\le l_1\le l_2 < \omega(n)\ln n$, then we sample $H$, a second graph in a pair,  according to the probability distribution of $\Hnq{2}{q}^{[N_{2}=l_2]}$. If $H\in \M_1$, then we choose  a first graph uniformly at random from all subgraphs of $H$ contained in $\M_1(l_1)$. If $H\notin \M_1$, then with probability $(P_1(l_1)-P_2(l_2))/Q_2(l_2)$ we choose a first graph
uniformly at random from all subgraphs of $H$ contained in $\M_1(l_1)$ and  with probability $Q_1(l_1)/Q_2(l_2)$  we choose a first
graph
 uniformly at random from all subgraphs of $H$ contained in $\M_2(l_1)$. 

According to this construction the first graph is chosen according to the probability distribution of $H^{(2)}(\M_1\cup\M_2)$ and the second according to the probability distribution of $\Hnq{2}{q}$. Moreover
$$
\mu(\{(l_1,l_2):l_1>l_2\})=o\left(\frac{1}{n}\right).
$$
In addition, the size of a maximum matching (i.e. $N_{2}$) is at most the number of edges of $\Hnq{2}{q}$, which has the binomial distribution with expected value $(C'n)^{2}q^3=O(\ln n)$. Thus  by Chernoff's bound~\eqref{RownanieChernoff}
$$
\mu(\{(l_1,l_2):l_2\ge \omega(n)\ln n\})=o\left(\frac{1}{n}\right)
$$
Therefore this is a desired coupling and it is well defined for large $n$ if $P_1(l_1)\ge P_2(l_2)$ for large $n$ and $l_1\le l_2$.  
Calculations show that for a given $l<\omega(n)\ln n$ and $\omega(n)$ tending slowly to infinity
\begin{align*}
Q_1(l)&\le
1-\frac{\binom{n}{l}^{2}(l!)}{\binom{n^{2}}{l}}
=
1-\prod_{i=0}^{l-1}\left(\frac{(n-i)^{2}}{n^{2}-i}\right)
=\\
&=
1-\prod_{i=0}^{l-1}\left(1-\frac{2ni-i^2}{n^{2}-i}\right)
\le
1-\prod_{i=0}^{l-1}\left(1-\frac{2l}{n}\right)\le \frac{2l^2}{n}
\intertext{and}
Q_2(l)&=\Pr\{\Hnq{2}{q}^{[N_{2}=l]}\notin\M_1\}\ge\\
&\ge \frac{\Pr\{\Hnq{2}{q}^{[N_{2}=l]}\notin\M_1\}-\Pr\{\Hnq{2}{q}^{[N_{2}=l]}\notin\M_1\cup\M_2\}}{1-\Pr\{\Hnq{2}{q}^{[N_{2}=l]}\notin\M_1\cup\M_2\}}=\\
&=
\frac{\Pr\{\Hnq{2}{q}^{[N_{2}=l]}\in\M_2\}}{\Pr\{\Hnq{2}{q}^{[N_{2}=l]}\in\M_1\}+\Pr\{\Hnq{2}{q}^{[N_{2}=l]}\in\M_2\}}=\\
&=
\frac{\Pr\{\Hnq{2}{q}\in\M_2(l)\}}{\Pr\{\Hnq{2}{q}\in\M_1(l)\}+\Pr\{\Hnq{2}{q}\in\M_2(l)\}}=\\
&=\Omega\left(nq^{2}l\right)=\Omega(n^{-1/3}l),
\end{align*}
since
\begin{align*}
\Pr&\{\Hnq{2}{q}\in\M_2(l)\}=\\
&=\binom{C'nq}{l+1}\binom{C'nq}{l}\binom{l+1}{2}(l!)
\left(\frac{q}{1-q}\right)^{l+1}
\left(1-q\right)^{(C'nq)^{2}}
=\\
&=
\binom{C'nq}{l}^{2}\frac{(C'nq-l)}{(l+1)}
\cdot 
\frac{(l+1)l}{2}
(l!)
\left(\frac{q}{1-q}\right)^{l}
\frac{q}{1-q}
\left(1-q\right)^{(C'nq)^{2}}
=\\
&=\Pr\{\Hnq{2}{q}\in\M_1(l)\}
(1+o(1))\frac{C'nq^2l}{2}
\end{align*}

Hence $Q_1(l_1)=o(Q_2(l_2))$ uniformly over all $1\le l_1\le l_2\le \omega(n)\ln n$ and $\omega(n)$ such that $\omega(n)\ln n=o(nq)$.

\czesc{CASE 3 and 4}

If $n^{-2/3}\ln^{1/3} n\ll q$ the numbers of edges in $\Hk{2}{q^3}$ and $\Hnq{2}{q}$ are sharply concentrated around their expected values. 

Let $H^{**}(x)$ and $T^{**}(x)$ be random bipartite multigraphs with $2$-partition $(\X{1},\X{2})$ and $(\X{1}',\X{2}')$, respectively, with the numbers of edges with the Poisson distribution $\Po(-n^{2}\ln (1-q^3))$ and $\Po(-(C'nq)^{2}\ln (1-q))$, respectively, and an edge sets constructed by independently choosing one by one with repetition edges from 
$\{(x_1,x_2):x_1\in \X{1}\text{ and }x_2\in \X{2}\}$ and 
$\{(x_1,x_2):x_1\in \X{1}'\text{ and }x_2\in \X{2}'\}$, respectively. By Fact~\ref{FaktXMPoisson} $\Hk{2}{q^3}$ and $\Hnq{2}{q}$ are underlying graphs of $H^{**}(x)$ and $T^{**}(x)$.

Let moreover $H^{***}(x)$ and $T^{***}(x)$ be multigraphs constructed in an analogous manner but with $C_1n^{2}q^3$ ($C_1>1$ is a constant) and $(C'')^{2}n^{2}q^3$ edges (where $C'/C'' > 1$ are constants), respectively. 
By Chernoff's bound \eqref{RownanieChernoffPoisson} 
$$H^{**}(x)\coup{o(1/n)}H^{***}(x)\quad\text{ and }\quad T^{***}(x)\coup{o(1/n)}T^{**}(x).$$ 

Notice that choosing an edge in above defined multigraphs is equivalent to choosing its $2$ vertices independently from each set of $2$--partition. Therefore, instead of choosing each edge one by one, first a degree sequence in each set $\X{i}$ ($\X{i}'$) may be chosen and on this basis a multigraph with a given degree sequence may be crated. 
Let $\D{1}{j}$ be the random variable denoting the degree of the $j$-th vertex in $\X{i}$ in $H^{***}(x)$ and $\D{5}{j}$ be the random variable denoting the degree of the $j$-th vertex in $\X{i}$ in $T^{***}(x)$. By Fact~\ref{FaktCouplingSuma}
$$
(\D{1}{1},\ldots,\D{1}{n})\coup{o(1/n)}(\D{5}{1},\ldots,\D{5}{n}),\text{for each $\X{i}$, $1\le i\le 2$}
$$
 imply
$$
H^{***}(x)\coup{o(1/n)} T^{***}(x),
$$

We introduce auxiliary urn models. Assume that we have $n$ urns. Let $\D{i}{}=(\D{i}{1},\ldots,\D{i}{n})$ be the random vector in which $\D{i}{j}$ represents the number of balls in the $j$-th urn in the $i$--th model. Let $1<C_1<C_2$, $C'''/C''''>1$, $C''/C'''>1$ and $C'/C''>1$ be constants such that $C_2<C''''$. 
\begin{itemize}
\item In the $1$--st model we throw $C_1 n^{2}q^3$ balls one by one  independently, with repetition, to the urn chosen uniformly at random from $n$ urns.
\item In the $2$--nd model the number of thrown balls has the Poisson distribution $\Po(C_2n^{2}q^3)$, i.e. by Fact~\ref{FaktXMPoisson}   $\D{2}{j}$ has the Poisson distribution $\Po(C_2nq^3)$ (for $K=2$, $P_2=\frac{1}{n}$, $n_2=n$).
\item In the $3$--rd model $\D{3}{j}=D_j\cdot D_j'$, where $D_j$ is a Bernoulli random variable with probability of success $C''''q$ and $D_j'$ has the Poisson distribution $\Po(C''''nq^{2})$.
\item In the $4$--th model first we select $C'''nq$ urns from the set of all urns and the number of  balls thrown to the selected urns has the Poisson distribution $\Po((C''')^{2}n^{2}q^3)$, i.e. for the urns not selected  $\D{4}{j}=0$ and for the selected urns $\D{4}{j}$ has the Poisson distribution $\Po(C'''nq^{2})$ (by Fact~\ref{FaktXMPoisson} for $K=2$, $P_2=\frac{1}{C'''nq}$, $n_2=C'''nq$). 
\item In the $5$--th model first we select $C''nq$ urns and we throw $(C'')^{2}n^{2}q^3$ balls one by one  independently to the urn chosen uniformly at random from the set of selected urns.
\end{itemize}
By Chernoff's bound
$$
\D{1}{}\coup{o(1/n)}\D{2}{}\quad \text{and}\quad\D{3}{}\coup{o(1/n)}\D{4}{}\coup{o(1/n)}\D{5}{}.
$$
Moreover, by Fact~\ref{FaktCouplingSuma}, if for large $n$
\begin{equation}\label{RownanieCouplingDj}
\D{2}{j}\coup{o(1/n^2)}D_j\cdot D_j',
\end{equation}
then for large $n$
\begin{equation}\label{RownanieCouplingD}
\D{2}{}\coup{o(1/n)} \D{3}{}.
\end{equation}
The constants may be chosen such that
$$
C''''=\begin{cases}
   4,&\text{ for }nq^{2}=o(1);\\ 
   \omega''''(n),&\text{ for }nq^{2}=\Theta(1);\\ 
   c'''' nq^{2},&\text{ for }nq^{2}\to \infty,
   \end{cases}
$$
where $c''''>1$ and $\omega''''(n)$ is a function tending slowly to infinity .

\noindent For large $n$
\begin{align*}
\Pra{\D{2}{j}\ge 1}
&= 1-\exp(-C_2nq^3)
\le\\
\label{RownanieStopien1}
&\le C''''
q
\left(
1-
\exp\left(-(C'''')nq^{2}\right)\right)=\\
&=\Pra{\D{3}{j}\ge 1}.
\end{align*}

\noindent Moreover, for $t\ge 2$, $nq^2=o(1)$ and large $n$ 
\begin{align*}
\Pra{\D{2}{j}\ge t}&\sim \frac{(C_2nq^3)^t}{t!} = o\left(  C''''q\frac{(C'''nq^{2})^t}{t!}\right)=o\left(\Pra{\D{3}{j}\ge t}\right)
\end{align*}
This implies \eqref{RownanieCouplingDj} for $nq^2=o(1)$.
 
Let now $nq^2=\Omega(1)$. By Chernoff's bound, if we estimate the number of urns with at least one ball and compare it to the number of balls we get, with probability $1-o(1/n)$ the number of urns with at least $2$ balls in the $3$-rd model is $o(n^{2}q^3)$ and $\Omega(n^{2}q^3)$ in the $2$-nd model. Therefore, since urns with at least $2$ balls are uniformly distributed, a coupling is easy to construct.

This completes the proof of \eqref{RownanieCouplingGHGT}. It remains to prove \eqref{RownanieCouplingT*Hk-1}.

\czesc{Proof of \eqref{RownanieCouplingT*Hk-1}}

Let $C=C(q)$ be defined as in \eqref{RownanieDefinicjaC}.
Define  
$X_n(x_1,x_{2})=|\{x\in\X{3}: x_1\in \X{1}^*(x), \ldots, x_{2}\in \X{2}^*(x)\}|$. 
$X_n(x_1,x_{2})$ has the binomial distribution $\Bin(n,(Cq)^{2})$ and 
$$
\E X_n=C^2nq^2=
\begin{cases}
cnq^2,\text{ where } c>25&\text{ for } nq^2=o(1);\\
\omega^2(n)nq^2,&\text{ for } nq^2=\Theta(1);\\
cn^3q^{6},\text{ where } c>1,&\text{ for } nq^2\to \infty.\\
\end{cases}
$$
Therefore by Lemma~\ref{RownanieChernoffDokladny1} 
\begin{equation*}
\Pr\{\exists_{(x_1,x_2)}X_n(x_1,x_2)\ge a'_n(q)\}\le n^2\Pra{X_n(x_1,x_2)\ge a'_n(q)}=o(1),
\end{equation*}
where
$$
a'_n(q)=
\begin{cases}
3\ln n/(\ln \ln n - \ln (nq^2)),&\text{ for } nq^2=o(1);\\ 
3\ln n/\ln \ln n,&\text{ for } nq^2=\Theta(1);\\
3\ln n/(\ln \ln n - \ln (n^3q^{6}),&\text{ for } nq^2\to \infty\text{ and } nq^2=o(\sqrt[3]{\ln n});\\
\omega_1(n)\: n^3q^{6},\text{ where }\omega_1(n)\to\infty&\text{ for } nq^2\to \infty\text{ and } nq^2=\Theta(\sqrt[3]{\ln n});\\
cn^3q^{6},\text{ where }c>1&\text{ for } nq^2\to \infty\text{ and } o(nq^2)=\sqrt[3]{\ln n},\\
\end{cases}
$$
(since $\omega(n)$ tends to infinity arbitrarily slowly).\\
By definition, probability that there is an edge connecting $x_1$ and $x_2$ in $\bigcup_{x\in\X{3}}T^*(x)$ is at most $X_n(x_1,x_{2})\cdot q$, thus
$$
\bigcup_{x\in\X{3}}T^*(x)\coup{o(1)}\Hk{2}{a_n(q)q}.
$$
\end{proof}

\begin{proof}[Proof of Lemma~\ref{TwierdzenieCoupling4} and \ref{TwierdzenieCoupling5}]

Let $k=4$ or $k=5$. For $x\in\X{k}$, let $H(x)$ be a hypergraph with vertex set $\{x\}\cup\bigcup_{i=1}^{k-1}\X{i}$ and edge set consisting of those edges from $E\left(\Hk{k}{q^{\binom{k}{2}}}\right)$, which contain $x$. Then  
\begin{equation}\label{RownanieSumaHx}
\Hk{k}{q^{\binom{k}{2}}}=\bigcup_{x\in\X{k}}H(x).
\end{equation}

Let $T(x)$ be an auxiliary hypergraph,
with vertex set $\{x\}\cup\bigcup_{i=1}^{k-1}\X{i}$ and edge set constructed by the following procedure. First we add each edge $(x,y)$, $y\in\bigcup_{i=1}^{k-1}\X{i}$ independently with probability $C q$ ($C>5$) to the edge set and then independently with probability $q^{\binom{k-1}{2}}$ we add to the edge set each edge $(x_1,\ldots,x_{k-1})\in\X{1}^*\times\ldots\times\X{k-1}^*$, where, for each $1\le i\le k-1$, $\X{i}^*$ is the set of vertices connected by an edge with $x$. Let moreover $T^*(x)$ be a subhypergraph of $T(x)$ induced on $\bigcup_{i=1}^{k-1}\X{i}$. 
Recall that \eqref{RownanieCouplingGHGT} implies \eqref{RownanieGHxTx}. Similarly
\begin{equation}\label{2RownanieCouplingGHGT}
\Hk{k-1}{q^{\binom{k}{2}}}\coup{o(1/n)} T^*(x)
\end{equation}
implies
\begin{equation}\label{2RownanieCouplingGHxTx}
G H(x)\coup{o(1/n)} G T(x). 
\end{equation}

\noindent Moreover if for some constant $c>5(k-1)$
\begin{equation}\label{2RownanieCouplingT*Hk-1}
 \bigcup_{x\in\X{k}}T^*(x)\coup{o(1)}\Hk{k-1}{(cq)^{\binom{k-1}{2}}},
\end{equation}
where $\Hk{k-1}{(cq)^{\binom{k-1}{2}}}$ is independent of choices of $\X{i}^*$
and
\begin{equation}\label{RownanieLematk-1}
\Hk{k-1}{(cq)^{\binom{k-1}{2}}}\coup{o(1)}\Gkn{k-1}{a_n(c'_kq)c'_kq}
\end{equation}
then
\begin{equation*}
\bigcup_{x\in\X{k}}G T^*(x)\coup{o(1)}\Gkn{k-1}{a_n(c'_kq)c'_kq}. 
\end{equation*}
Thus also
\begin{equation}\label{2RownanieCouplingSumaGTGq}
\bigcup_{x\in\X{k}}GT(x)\coup{o(1)}\Gkn{k}{a(c'_kq)c'_kq}.
\end{equation}

Since by \eqref{RownanieSumaHx} we have that \eqref{2RownanieCouplingGHxTx} and \eqref{2RownanieCouplingSumaGTGq} imply the thesis, we are left with showing \eqref{2RownanieCouplingGHGT}, \eqref{2RownanieCouplingT*Hk-1} and \eqref{RownanieLematk-1}.
Since \eqref{RownanieLematk-1} follows by Lemma \ref{TwierdzenieCoupling3} or \ref{TwierdzenieCoupling4} for $k=4$ or $k=5$, respectively, we only need to prove \eqref{2RownanieCouplingGHGT}  and \eqref{2RownanieCouplingT*Hk-1}. 

\czesc{Proof of \eqref{2RownanieCouplingGHGT}}

The proof of \eqref{2RownanieCouplingGHGT} is similar to this of \eqref{RownanieCouplingGHGT} thus we omit many details which are the same as in the proof of \eqref{RownanieCouplingGHGT}. Under the assumptions of the lemmas $\ln n = o(nq)$. Thus in analogy to the proof of Lemma~\ref{TwierdzenieCoupling3} (in CASE 2, 3 and 4) 
$$
\Hnq{k-1}{q^{\binom{k-1}{2}}}\coup{o(1/n)} T^*(x),
$$
where $5=C'<C$ and $\Hnq{k-1}{r}$ is an analogue of $\Hnq{2}{r}$ (i.e. is created by first choosing  $\X{i}'$ uniformly at random from $C'nq$--element subsets of $\X{i}$, for all $1\le i\le k-1$, and then adding each edge $(x_1,\ldots, x_{k-1})\in \X{1}'\times\ldots\times\X{k-1}'$ independently with probability~$r$.
Therefore it remains to show that
$$
\Hk{k-1}{q^{\binom{k}{2}}}\coup{o(1/n)}\Hnq{2}{q^{\binom{k-1}{2}}}.
$$

\noindent As before the proof differ for $q$ in different ranges, therefore we will consider two cases.
\begin{itemize}
\item $q=O(n^{-2/k}\ln ^{1/\binom{k}{2}})$ (similar to CASE 2 in the proof of Lemma~\ref{TwierdzenieCoupling3})
\item $q\gg n^{-2/k}\ln ^{1/\binom{k}{2}}$ and $q=o(n^{-2/5})$ (similar to CASE 3 in the proof of Lemma~\ref{TwierdzenieCoupling3})
\end{itemize}

Let $q=O(n^{-2/k}\ln ^{1/\binom{k}{2}})$.
The lemma below is a generalisation of Lemma~\ref{LematMatchingSize} and follow by induction. The proof of an inductive step is similar to the proof of Lemma~\ref{LematMatchingSize} with slight changes.

\begin{lem}\label{AppLematMatchingSize}
Let $k\ge 3$,  $r=o(1/(C'nq)^{k-2})$, $\ln n=o(nq)$ and $N_{k-1}(r)$ be the random variable denoting the size of a maximum matching in $\Hnq{k-1}{r}$, then 
\begin{equation}\label{AppRownanieNt}
N'_{k-1}(r)\coup{o(1/n)} N_{k-1}(r),
\end{equation}
where $N'_{k-1}(r)$ has the binomial distribution $\Bin(C'nq,s_{k-1}(r))$ and
\begin{multline*}
s_{k-1}(r)=\\
=
\begin{cases}
1-\exp(-(1-(1-r)^{C'nq})(1-\sqrt{3C'nq\ln n}/(C'nq)))\sim C'nq r
&\text{ for }k=3\\
1-\exp(-s_{k-2}((1-(1-r)^{C'nq}))(1-\sqrt{3C'nq\ln n}/(C'nq)))\sim (C'nq)^{k-2} r
&\text{ for }k\ge 4
\end{cases}
\end{multline*}
\end{lem}
\begin{proof}
The proof follow by induction on $k$. By Lemma~\ref{LematMatchingSize} it remains to show an inductive step. 
Let $k\ge 4$.
Let $H$ be a hypergraph chosen according to the probability distribution of $\Hnq{k-1}{r}$. Define $H'$ to be a hypergraph with vertex set 
$\X{1}\cup\ldots\cup \X{k-2}$ 
and edge set 
$\{
(x_1,\ldots,x_{k-2})
 : 
 x_i\in\X{i}
 \text{ and }
 \exists_{x_{2}\in\X{2}}
(x_1,\ldots,x_{k-1})
\in 
E(H)
\}.$
Notice that $H'$ is chosen according to the probability distribution  of $\Hnq{k-2}{1-(1-r)^{C'nq}}$. Now let $H'_{M}$ be its subgraph with edge set chosen uniformly at random from all maximum matchings of $H'$.
Let $H''$ be a subhypergraph of $H$ such that for each edge $(x_1,\ldots,x_{k-2})\in E(H'_{M})$ we pick uniformly at random an edge from $E(H)$ containing $(x_1,\ldots,x_{k-2})$ and add it to the edge set of $H''$. A maximum matching in $H$ is at least of the size of the set of non isolated vertices in $\X{k-1}$ in $H''$. The edge set of $H''$ may be alternatively constructed in the following way. First we pick an integer according to the distribution of $N_{k-2}((1-(1-r)^{C'nq})$, then, given the value of the picked integer, we pick a matching  uniformly at random from all matchings of this cardinality with edges from $\X{1}\times\ldots\times\X{k-2}$. Independently we choose $\X{k-1}'$ uniformly at random from all $C'nq$--element subsets of $\X{k-1}$. Then to each edge from the chosen matching, in order to create an edge of $H''$, we add one vertex, chosen uniformly at random from the set $\X{k-1}'$. For all edges the choices of an additional vertex are independent with repetition. 
By Fact~\ref{FactCouplingWarunkowe}, the above construction and inductive assumption (i.e. $N'_{k-2}(1-(1-r)^{C'nq})\coup{o(1/n)}N_{k-2}(1-(1-r)^{C'nq})$) we have
\begin{equation*}
X(N'_{k-2}(1-(1-r)^{C'nq}))
\coup{o(1/n)} X(N_{k-2}(1-(1-r)^{C'nq}))
\coupling{1} N_{k-1}(r),
\end{equation*}
where $X(\cdot)$ is defined as in \eqref{RownanieDefinicjaXM} for $K=2$, $n_2=C'nq$, $P_2=1/(C'nq)$.
Moreover $X(N'_{k-2})$ for $K=2$, $n_2=C'nq$, $P_2=1/(C'nq)$ has the same distribution as  
$X(C'nq)$ for $K=2$, $n_2=C'nq$, $P_2=s_{k-2}(1-(1-r)^{C'nq})/(C'nq)$.

Therefore by \eqref{RownanieChernoffPoisson} and Fact~\ref{FactCouplingWarunkowe} 
\begin{equation*}
X(M)
\coup{o(1/n)} X(C'nq)
\coup{o(1/n)} N_{k-1},
\end{equation*}
where $X(\cdot)$ is defined for $K=2$, $n_2=C'nq$, $P_2=s_{k-2}(1-(1-r)^{C'nq})/(C'nq)$
and $M$ has the Poisson distribution $\Po(C'nq-\sqrt{3C'nq\ln n})$. Thus by Fact~\ref{FaktXMPoisson}
$X(M)$ has the binomial distribution $\Bin(C'nq, s_{k-1}(r))$. 
\end{proof}

Let $M_{k-1}$ be a random variable denoting the size of the edge set in $\Hk{k-1}{q^{\binom{k}{2}}}$.  

\begin{lem}\label{AppLematCouplingMatching}
Let $C' = 5$, $M_{k-1}$ has the binomial distribution $\Bin\left(n^{k-1},q^{\binom{k}{2}}\right)$ and let $N_{k-1}$ be the size of a maximum matching in $\Hnq{k-1}{q^{\binom{k-1}{2}}}$. Then
 $$
 M_{k-1}\coup{o(1/n)}N_{k-1}.
 $$  
\end{lem}

\begin{proof}
The proof is similar to the proof of Lemma~\ref{LematCouplingMatching}. For $k\ge 4$
\begin{align*}
M_{k-1}&
=\sum_{i=1}^{nq}\xi_i, 
\text{ where $\xi_i$ are independent with distribution }
\Bin\left(\frac{n}{q},q^{\binom{k}{2}}\right);\\
N_{k-1}'&
=\sum_{i=1}^{nq}\zeta_i, 
\text{ where $\zeta_i$ are independent with distribution }
\Bin\left(C',s_{k-1}\left(q^{\binom{k-1}{2}}\right)\right).
\end{align*}
A similar calculation to this from Lemma~\ref{LematCouplingMatching} shows that
\begin{align*}
 \forall_{1\le l\le 4}\Pr\{\xi_i=l\}=\Pr\{\zeta_i=l\}\quad
\text{and}\quad
\Pr\{\xi_i>4\}=o\left(\frac{1}{n^2q}\right),
\end{align*}
which imply the thesis of Lemma~\ref{AppLematCouplingMatching}
\end{proof}

Let $\mathcal{G}$ be a set of $k-1$--partite graphs with $k-1$--partition $(\X{1},\ldots,\X{k-1})$. 
Define  $\M(l)$, $\M_1(l)$, $\M_2(l)$, $\M_1$, $\M_2$ as in the proof of Lemma~\ref{TwierdzenieCoupling3}.

\noindent For $q=o(n^{-2/k})$
\begin{align}\label{AppRownanieTylkoMatching}
\Pra{\Hk{k-1}{q^{\binom{k}{2}}}\notin \M_1}
\le
(k-1)n\binom{n^{k-2}}{2}\left(q^{\binom{k}{2}}\right)^{2}
=o\left(\frac{1}{n}\right) 
\end{align}
and similarly for $q=O\left(n^{-2/k}\ln^{1/\binom{k}{2}} n\right)$
\begin{align*}
\Pr&\{\Hk{k-1}{q^{\binom{k}{2}}}\notin \M_1\cup\M_2\}=
o\left(\frac{1}{n}\right).
\end{align*}

\noindent For $q=\Omega(n^{-2/k})$, a given $l<\omega(n)\ln n$ and $\omega(n)$ tending slowly to infinity we have
\begin{align*}
&\Pr\{\Hnq{k-1}{q^{\binom{k-1}{2}}}\in\M_2(l)\}=\\
&=\Pr\left\{\Hnq{k-1}{q^{\binom{k-1}{2}}}\in\M_1(l)\right\}
(1+o(1))\frac{(C'n)^{k-1}q^{\binom{k-2}{2}}l}{2}.
\end{align*}
Therefore
\begin{align*}
Q_1(l)&\le
1-\frac{\binom{n}{l}^{k-1}(l!)^{k-1}}{\binom{n^{k-1}}{l}}
\le \frac{(k-1)l^2}{n}
\intertext{and}
Q_2(l)&\ge
\frac{\Pr\left\{\Hnq{k-1}{q^{\binom{k-1}{2}}}\in\M_2(l)\right\}}{\Pr\left\{\Hnq{k-1}{q^{\binom{k-1}{2}}}\in\M_1(l)\right\}+\Pr\left\{\Hnq{k-1}{q^{\binom{k-1}{2}}}\in\M_2(l)\right\}}=\\
&=\Omega\left(n^{k-2}q^{\binom{k}{2}-1}l\right)=\Omega\left(\frac{l^2}{n}\cdot\frac{l}{q}\right).
\end{align*}
Thus $Q_1(l)=o(Q_2(l))$ and the same couplings as those presented in the proof of CASE 2 of Lemma~\ref{TwierdzenieCoupling3} but with $\Hnq{2}{q}$ and $\Hk{2}{q^{3}}$ replaced by $\Hnq{k-1}{q^{\binom{k-1}{2}}}$ and $\Hk{k-1}{q^{\binom{k}{2}}}$ imply the thesis.

Let $n^{-2/k}\ln^{1/\binom{k}{2}} n= o(q)$. In this case the numbers of edges in $\Hk{k-1}{q^{\binom{k-1}{2}}}$ and $\Hnq{k-1}{q^{\binom{k-1}{2}}}$ are sharply concentrated around their expected values. In analogy to the proof of Lemma~\ref{TwierdzenieCoupling3} we define $H^{**}(x)$, $T^{**}(x)$,  $H^{***}(x)$, $T^{***}(x)$ and $\D{i}{j}$ for $1\le i\le 5, 1\le j\le n$. Therefore 
$\D{2}{j}$ has the Poisson distribution $\Po (C_2 n^{k-2}q^{\binom{k}{2}})$ and $\D{2}{j}=D_j\cdot D_j'$, where $D_j$ is a Bernoulli random variable with probability of success $C''''q$ and $D_j'$ has the Poisson distribution 
$\Po ((C'''' n)^{k-2}q^{\binom{k}{2}-1})$ for $C''''=4$.
Thus calculation shows that for large $n$
\begin{align*}
\Pra{\D{2}{j}\ge 1}
\le \Pra{\D{3}{j}\ge 1}.
\end{align*}
and for $t\ge 2$, $q=o(n^{-2/5})$ and large $n$ 
\begin{align*}
\Pra{\D{2}{j}\ge t}=o\left(\Pra{\D{3}{j}\ge t}\right)
\end{align*}
This implies \eqref{RownanieCouplingDj} for $k=4,5$ and proves \eqref{2RownanieCouplingGHGT} in the case: $n^{-2/k}\ln^{1/\binom{k}{2}} n=o(q)$ and $q=o(n^{-2/5})$.

\czesc{Proof of \eqref{2RownanieCouplingT*Hk-1}}

Define  
$X_n=X_n(x_1,\ldots,x_{k-1})=|\{x\in\X{k}: x_1\in \X{1}^*(x) \ldots x_{k-1}\in \X{k-1}^*(x)\}|$. 
It has the binomial distribution $\Bin(n,(Cq)^{k-1})$ and for large $n$
$$
\E X_n=C^{3}nq^{3}\le n^{-1/5} \quad\text{and}\quad \E X_n=C^{4}nq^{4}\le n^{-3/5}.
$$
Therefore, since
$$
\frac{\ln n}{\ln \ln n - \ln n^{-1/5}}\sim 5 \quad\text{and}\quad \frac{\ln n}{\ln \ln n - \ln n^{-3/5}}\sim \frac{5}{3},
$$
by Lemma~\ref{RownanieChernoffDokladny} for any constant $c_4''>15$ and $c_5''>20/3$
\begin{equation*}
\Pr\{\exists_{(x_1,\ldots,x_{k-1})}X_n(x_1,\ldots,x_{k-1})\ge c_k''\}\le n^{k-1}\Pra{ X_n(x_1,\ldots,x_{k-1})\ge c_k''}=o(1).
\end{equation*}
Thus in the case $k=4$, for any constant $c'_4>\sqrt[3]{15}$, we have
$$
\bigcup_{x\in\X{4}}T^{*}(x)\coup{o(1)}\Hk{3}{(c'_4q)^{3}}.
$$
Thus, by Lemma~\ref{TwierdzenieCoupling3},
$$
\bigcup_{x\in\X{4}}T^{*}(x)\coup{o(1)}\Gkn{3}{a_n(c'_4q)c'_4q}.
$$
This implies the thesis of Lemma~\ref{TwierdzenieCoupling4}.

Analogously, for $k=5$ and $c>\sqrt[6]{20/3}$
$$
\bigcup_{x\in\X{5}}T^{*}(x)\coup{o(1)}\Hk{4}{(cq)^{6}}.
$$
Therefore by Lemma~\ref{TwierdzenieCoupling4}, for $c'_5>\sqrt[6]{20/3}\sqrt[3]{15}=\sqrt[6]{2^2\cdot 3\cdot 5^3}$
$$
\bigcup_{x\in\X{5}}T^{*}(x)\coup{o(1)}\Gkn{4}{a_n(c'_5q)c'_5q},
$$
which implies the thesis of Lemma~\ref{TwierdzenieCoupling5}.
\end{proof}

\section*{Acknowledgments}
I would like to thank Andrzej Ruci\'{n}ski for the suggestion to read the article~\cite{Coupling}. 
I acknowledge a partial support by Ministry of Science and Higher Education, grant N N206 2701 33, 2007--2010.

\bibliographystyle{amsplain}
\normalfont
\bibliography{equivalence_f}

\end{document}